\shorttitle{Weighted Random Sums} 
\newtheorem{mydef}[thm]{\noindent Definition}
\newtheorem{myexample}[thm]{\noindent Example}
\def\Indicator{\mathop{\hskip0pt{1}}\nolimits}
\begin{document}

\title{Asymptotics for Weighted Random Sums}

\authorone[Columbia University]{Mariana Olvera-Cravioto}

\addressone{Department of Industrial Engineering and Operations Research, Columbia University, New York, NY 10027 }

\begin{abstract}
Let $\{ X_i\}$ be a sequence of independent identically distributed random variables with an intermediate regularly varying (IR) right tail $\overline{F}$. Let $(N, C_1, C_2, \dots)$ be a nonnegative random vector independent of the $\{X_i\}$ with $N \in \mathbb{N} \cup \{\infty\}$. We study the weighted random sum  $S_N = \sum_{i=1}^N C_i X_i$, and its maximum, $M_N = \sup_{1 \leq k < N+1} \sum_{i=1}^k C_i X_i$. This type of sums appear in the analysis of stochastic recursions, including weighted branching processes and autoregressive processes. In particular, we derive conditions under which
$$P(M_N > x) \sim P(S_N > x) \sim E\left[ \sum_{i=1}^N \overline{F}(x/C_i) \right],$$
as $x \to \infty$. When $E[X_1] > 0$ and the distribution of $Z_N = \sum_{i=1}^N C_i$ is also IR, we obtain the asymptotics 
$$P(M_N > x) \sim P(S_N > x) \sim E\left[ \sum_{i=1}^N \overline{F}(x/C_i) \right] + P(Z_N > x/E[X_1]).$$
For completeness, when the distribution of $Z_N$ is IR and heavier than $\overline{F}$, we also obtain conditions under which the asymptotic relations
$$P(M_N > x) \sim P(S_N > x) \sim P(Z_N > x/E[X_1])$$
hold. 
\end{abstract}

\keywords{Randomly weighted sums; randomly stopped sums; heavy tails; intermediate regular variation; regular variation; Breiman's theorem}

\ams{60G50}{60F10, 60J80, 60G70}

\section{Introduction}

The analysis of randomly weighted sums plays an important role in the insurance and economic literature. A well known example in ruin theory interprets the weights as discount factors and the sequence $\{X_i\}$ as the net losses of an insurance company to analyze the probability of ruin either in finite or infinite time (see, e.g., \cite{Tan_Tsi_03}). In economics, the $\{X_i\}$ can be interpreted as net incomes of an investment and the weights as random return rates (see, e.g., \cite{Goovaerts_etal_05}).  In general, randomly weighted sums appear in the analysis of random stochastic equations (e.g., autoregressive processes), and have applications in many areas beyond the ones mentioned above. If we further assume that the number of terms in the sum can be random, we obtain a randomly stopped and randomly weighted sum. Such {\em weighted random sums} appear in the context of weighted branching processes and fixed-point equations of smoothing transforms (see \cite{Liu_98, Iksanov_04, Alsm_Mein_10b}), and more recently, in the analysis of information ranking algorithms, e.g., Google's PageRank (see \cite{Volk_Litv_08, Jel_Olv_10}). In all the examples mentioned above, the $\{X_i\}$ are often assumed to be heavy-tailed. Hence, the results in this paper combine two different topics in the literature for sums of heavy-tailed random variables, the analysis of randomly weighted sums and the analysis of randomly stopped sums.

Consider a sequence $\{ X_i \}_{i \geq 1}$ of independent, identically distributed (i.i.d.) random variables with finite mean and a heavy right tail distribution $\overline{F}$, where by heavy we mean $E[e^{\epsilon X_1^+} ] = \infty$ for all $\epsilon > 0$, and $x^+ = \max\{x, 0\}$. Let $(N, C_1, C_2, \dots)$ be a nonnegative random vector independent of the $\{X_i\}$ with $N \in \mathbb{N} \cup \{\infty\}$.  We study the asymptotic behavior of the randomly weighted and randomly stopped sum $\sum_{i=1}^N C_i X_i$, and of its maximum, $\sup_{1 \leq k < N+1} \sum_{i=1}^k C_i X_i$; the weights $\{C_i\}$ are allowed to be arbitrarily dependent and may depend on $N$ as well, and the convention throughout the paper is that $N +1 = \infty$ if $N = \infty$.  We point out that it is possible to avoid the introduction of $N$ by redefining the weights $\widetilde{C}_i = C_i \Indicator(i \leq N)$ and considering the sum $\sum_{i=1}^\infty \widetilde{C}_i X_i$,  but to emphasize the possibility of having a random number of summands we choose to keep the results in this paper in terms of $N$. Throughout the paper we use $f(x) \sim g(x)$ as $x \to \infty$ to denote $\lim_{x \to \infty} f(x)/g(x) = 1$, and $f(x) \asymp g(x)$ as $x \to \infty$ to denote $f(x) = O(g(x))$ and $g(x) = O(f(x))$. 

Although the literature of both weighted random sums and randomly stopped sums is extensive, this is the first paper, to our knowledge, to combine the two, and in doing so, to obtain the $N = \infty$ case under conditions that are close to the best possible. The main results also include an analysis of the cases where the asymptotic behavior of the weighted random sum does not follow the so-called {\em one-big-jump principle} ($P(\sum_{i=1}^n X_i > x) \sim n \overline{F}(x)$ as $x \to \infty$), and instead is dominated by the sum of the weights, which until now had only been done for the special case $C_i \equiv 1$ (see \cite{Jess_Miko_06, Den_Foss_Kor_10}).  

To gain some insight into the asymptotics
\begin{equation} \label{eq:IntroOnlySummands}
P\left( \sup_{1\leq k < N+1} \sum_{i=1}^k C_i X_i > x \right) \sim P\left( \sum_{i=1}^N C_i X_i > x \right) \sim E\left[ \sum_{i=1}^N \overline{F}(x/ C_i) \right], \qquad x \to \infty,
\end{equation}
note that if the $\{X_i\}$ are i.i.d. and heavy-tailed, and the weights $\{C_i\}$ satisfy suitable conditions, then the random variables $\{C_i X_i\}_{i \geq 1}$ behave as if they were independent, and the one-big-jump principle gives \eqref{eq:IntroOnlySummands}.  The asymptotic relation
$$P\left( \sum_{i=1}^\infty C_i X_i > x \right) \sim E\left[ \sum_{i=1}^\infty C_i^\alpha \right] \overline{F}(x), \qquad x \to \infty,$$
was established in \cite{Res_Will_91} for nonnegative and regularly varying $\{X_i\}$ (denoted $\{ X_i\}$ in $\mathcal{R}_{-\alpha}$, $\alpha > 0$), and \eqref{eq:IntroOnlySummands} was proven in \cite{Goovaerts_etal_05} for real-valued $\{X_i\}$ with regularly varying right tail and deterministic $N$, either $N = n$ (finite) or $N = \infty$. The setting where the $\{X_i\}$ are real-valued with right tail in the extended regular variation class was studied in \cite{Wang_Tang_06} ($N = n$) and \cite{Zhang_Shen_Weng_09} (both $N = n$ and $N = \infty$); in the latter the $\{X_i\}$ are allowed to be generally dependent with no bivariate upper tail dependence. Deterministic, real-valued weights with the $\{X_i\}$ in $\mathcal{R}_{-\alpha}$ were considered in \cite{Mik_Sam_00}. We point out that in all the mentioned works where $N = \infty$, the conditions imposed on the weights are considerably stronger than those imposed for a finite number of terms.   The first result in this paper establishes \eqref{eq:IntroOnlySummands} for i.i.d., real-valued $\{X_i\}$ with finite mean, right tail in the intermediate regular variation class, and $N$ potentially random; the conditions on the weights are basically the same regardless of whether $N$ is deterministic, random, or infinity.  Results for more general classes of heavy-tailed distributions but stronger conditions on the weights and $N = n$ are given in \cite{Tan_Tsi_03} (for bounded weights) and \cite{Chen_Su_06} (for $C_i = \prod_{j=1}^i Y_j$ and $\{Y_j\} \geq 0$ i.i.d. from a specific class of distributions).  The finite mean restriction is due to our interest in analyzing the asymptotic behavior of the randomly weighted and randomly stopped sum when it is not solely determined by the one-big-jump principle. 

As mentioned earlier, the scope of this paper is to combine the analysis of randomly weighted sums with that of randomly stopped sums. For instance, if we set $C_i \equiv 1$ for all $i \geq 1$, then the subexponential asymptotics $P\left( \sum_{i=1}^n X_i > x \right) \sim n P(X_1 >x)$ is known to hold, under suitable conditions on $N$, even for a random number of summands. The asymptotic relation
\begin{equation} \label{eq:MainRandomSum}
P\left( \sum_{i=1}^N X_i > x \right) \sim E[N] \overline{F}(x), \qquad x \to \infty,
\end{equation}
has a long history (see, e.g., \cite{Asm2003}, \cite{EmKlMi1997} and the references therein), although the analysis when $N$ does not have finite exponential moments is more recent. Relation \eqref{eq:MainRandomSum} was established in \cite{Dal_Ome_Ves_07} for several different sets of conditions on $N$ and the $\{X_i\}$, including some where $N$ may be subexponential. Some results imposing no conditions on $N$ and the $\{X_i\}$ in either the regularly varying or semi-exponential classes were derived in \cite{Borov_Borov_2008}.  The most general conditions were recently derived in \cite{Den_Foss_Kor_10} for $\{X_i\}$ in the class $\mathcal{S}^*$, which includes most subexponential distributions with finite mean. Moreover, the results in \cite{Den_Foss_Kor_10} also include the case where the asymptotic behavior of the randomly stopped sum is not solely determined by the one-big-jump principle, and, in particular, it was shown that
$$P\left( \sup_{1\leq k < N+1} \sum_{i=1}^k X_i > x \right) \sim P\left( \sum_{i=1}^N X_i > x \right) \sim E[N] \overline{F}(x ) + P(N > x/E[X_1]), \qquad x \to \infty,
$$
provided that the $\{X_i\}$ belong to $\mathcal{S}^*$, $E[X_1] > 0$, and $N$ belongs to the intermediate regular variation class.  The term $P(N > x/E[X_1])$ corresponds to the situation where the asymptotic behavior of the random sum is determined by the law of large numbers. This last asymptotic relation was previously proven in \cite{Jess_Miko_06} for the case where both $N$ and $X_1$ are nonnegative and belong to $\mathcal{R}_{-\alpha}$ with $\alpha \geq 1$, $P(N > x) \sim c P(X_1 > x)$ for some constant $c > 0$, and $E[X_1] < \infty$. All the results in \cite{Den_Foss_Kor_10} are readily applicable to our randomly weighted sums setting provided that the $\{C_i\}$ are i.i.d., independent of $N$, and that the sequence $\{C_i X_i\}$ belongs to $\mathcal{S}^*$. The second result in this paper extends the analysis to allow the vector $(N, C_1, C_2, \dots)$ to have an arbitrary distribution, but restricts the $\{X_i\}$ to belong to the intermediate regular variation class. In this context, the term $P(N > x/E[X_1])$ is replaced by $P\left(\sum_{i=1}^N C_i > x/E[X_1]\right)$ . 

For completeness, the third and last result in this paper considers the case where the behavior of the randomly stopped and randomly weighted sum is completely determined by the effects of the sum $\sum_{i=1}^N C_i$, which when the weights $\{C_i\}$ are i.i.d. and independent of $N$, corresponds to the dominance of the law of large numbers. The intuition remains the same in the presence of weights, as it corresponds to the situation where all the $\{X_i\}$ behave in an ordinary way, i.e., according to their mean, and it is the sum of the weights that is unusually large. Related results to those of Theorem \ref{T.ZDominates} can be found in \cite{Jess_Miko_06} for a regularly varying number of summands, $N$, $C_i \equiv 1$, and nonnegative $\{X_i\}$ with lighter tails than $N$. 

We end this section with two potential applications. The first one concerns information ranking algorithms, such as Google's PageRank algorithm for ranking webpages in the World Wide Web (WWW). If we let $R$ denote the (scale free) rank of a randomly chosen webpage, $N$ denote the number of webpages pointing to it (in-degree), and set $C_i = c/D_i$, where $D_i$ is the number of outbound links (out-degree) of the $i$th neighboring page and $0 < c < 1$ is a predetermined constant, then it can be shown that $R$ (approximately) satisfies the stochastic fixed-point equation
\begin{equation} \label{eq:PageRank}
R \stackrel{\mathcal{D}}{=} \sum_{i=1}^N C_i R_i + (1-c),
\end{equation}
where the $\{R_i\}$ are i.i.d. copies of $R$, independent of $(N, C_1, C_2, \dots)$, and $\stackrel{\mathcal{D}}{=}$ denotes equality in distribution. In the WWW, as in many other social networks, both the in-degree $N$ and the effective out-degree $D_i$ are assumed to be regularly varying. The problem of interest is to determine the proportion of highly ranked pages, which translates into the analysis of the asymptotic behavior of $P(R > x)$. The stochastic model leading to \eqref{eq:PageRank}, for the case of i.i.d. weights $\{C_i\}$ independent of $N$, was introduced in \cite{Volk_Litv_08}, and has been studied in detail in \cite{Jel_Olv_10}. The more realistic case where the vector $(N, C_1,C_2, \dots)$ is generally correlated serves as a motivating example for the results presented here. 

The second application concerns ruin probabilities. A well known example in ruin theory  where randomly weighted sums appear is in the analysis of discrete time risk models (see, e.g., \cite{Tan_Tsi_03, Wang_Tang_06}). Let $\{D_j\}$ be a sequence of i.i.d. nonnegative random variables representing discount factors per period, and let $\{ X_i\}$ be another sequence of i.i.d. real-valued random variables, independent of the $\{D_j\}$, used to denote the per period net losses of an insurance company; in many settings the $\{X_i\}$ are assumed to have a heavy right tail. Set the weight $C_i = \prod_{j=1}^i D_j$ to be the compound discount factor for period $i$. If the insurance company starts with an initial capital $x$, then its discounted surplus after $n$ periods is given by
$$W_n = x - \sum_{i=1}^n C_i X_i, \quad n \geq 1,  \qquad W_0 = x.$$
The quantities of interest are the probabilities of ruin in finite and infinite time, given respectively by
$$P\left( \max_{1\leq k \leq n} \sum_{i=1}^k C_i X_i > x \right) \qquad \text{and} \qquad P\left( \sup_{k \geq 0}  \sum_{i=1}^k C_i X_i > x \right). $$ 

The rest of the paper is organized as follows.  Upper bounds for the maximum of the randomly weighted sum are derived in Section \ref{S.UpperBound}, and lower bounds for the randomly stopped and randomly weighted sum are derived in Section \ref{S.LowerBound}. Finally, the proofs of the main results are given in Section~\ref{S.MainProofs}.

\section{Main Results} \label{S.Main}

We start by giving some definitions needed for the statement of the main theorems.

\begin{mydef}
Let $X$ be a random variable with right tail distribution $\overline{F}(x) = P(X > x)$. We say that $\overline{F}$ belongs to the {\em intermediate regular variation} (IR) class if
$$\lim_{\delta \downarrow 0} \limsup_{x \to \infty} \frac{\overline{F}((1-\delta) x)}{\overline{F}(x)} = 1.$$
\end{mydef}

We refer the reader to Chapter 2 in \cite{BiGoTe1987} for the definitions of regular variation ($\mathcal{R}_{-\alpha}$), extended regular variation (ER), and O-regular variation (OR), that are mentioned throughout the paper. It is well known that $\mathcal{R}_{-\alpha} \subset ER \subset IR \subset OR$.

\bigskip

\begin{mydef}
Let $\overline{F}(x) = P(X > x)$, $f(x) = -\log \overline{F}(x)$, and define
\begin{align*}
f_*(\lambda) = \liminf_{x \to \infty} (f(\lambda x)-f(x)) \qquad &\text{and} \qquad f^*(\lambda) = \limsup_{x \to \infty} (f(\lambda x)-f(x)), \\
\alpha_f =  \lim_{\lambda \to \infty} \frac{f^*(\lambda)}{\log \lambda} \qquad &\text{and} \qquad \beta_f =  \lim_{\lambda \to \infty} \frac{f_*(\lambda)}{\log \lambda}.
\end{align*}
The constant $\alpha_f$ is known as the {\em lower Matuszewska index} of $f$, and $\beta_f$ is known as the {\em upper Matuszewska index} of $f$, and they satisfy $0 \leq \alpha_f \leq \beta_f \leq \infty$. 
\end{mydef}

{\bf Remark:} For the OR family, Theorem 3.4.3 in \cite{BiGoTe1987} gives $0 \leq \alpha_f \leq \beta_f < \infty$. Furthermore, the constants $(-c,-d)$ in the definition of the ER class satisfy $c \leq \alpha_f \leq \beta_f \leq d$ (see pg. 68 in \cite{BiGoTe1987}). 

\bigskip

We are now ready to state the three main theorems of this paper. The first one corresponds to the setting where the one-big-jump principe dominates the behavior of the weighted random sum and its maximum. Since the weights $\{C_i\}$ are nonnegative, we use the convention that $\overline{F}(t/C_i) = 0$ for any $t \geq 0$ if $C_i = 0$.

\begin{thm} \label{T.SummandsDominate}
Suppose $\{X_i\}$ is a sequence of i.i.d. random variables with right tail distribution $\overline{F} \in IR$, Matuszewska indices $0 < \alpha_f \leq \beta_f < \infty$, and $E\left[|X_1|^{1+\epsilon}\right] < \infty$ for some $0 < \epsilon < \alpha_f$. Let $(N, C_1, C_2, \dots)$ be a nonnegative random vector independent of the $\{X_i\}$ with $N \in \mathbb{N} \cup \{\infty\}$ and satisfying $E\left[ \sum_{i=1}^N C_i^{\alpha_f - \epsilon} \right] < \infty$ and $E\left[ \sum_{i=1}^N C_i^{\beta_f + \epsilon} \right] < \infty$. If $E[N] < \infty$ then the condition  $E\left[ \sum_{i=1}^N C_i^{\alpha_f - \epsilon} \right] < \infty$ can be dropped. Let $Z_N = \sum_{i=1}^N C_i < \infty$ a.s. If any of the following holds,
\begin{enumerate} \renewcommand{\labelenumi}{\alph{enumi})}
\item $E[X_1] < 0$, or, 
\item $E[X_1] = 0$ and $P(Z_N > x) = O\left(\overline{F}(x)\right)$ as $x \to \infty$, or,
\item $E[X_1] > 0$ and $P(Z_N > x) = o\left(\overline{F}(x) \right)$ as $x \to \infty$,
\end{enumerate}
then, as $x \to \infty$,
\begin{equation} \label{eq:OnlySummands}
P\left( \sup_{1\leq k < N+1} \sum_{i=1}^k C_i X_i > x \right) \sim P\left( \sum_{i=1}^N C_i X_i > x \right) \sim E\left[ \sum_{i=1}^N \overline{F}(x/C_i) \right].
\end{equation}
\end{thm}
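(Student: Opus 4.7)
The plan is to sandwich $P(M_N > x)$ and $P(S_N > x)$ between matching asymptotic bounds. Observe that $M_N \ge S_N$, either because the supremum includes $k = N$ when $N < \infty$ or because the limit of convergent partial sums is bounded by their supremum when $N = \infty$, so $P(S_N > x) \le P(M_N > x)$. It therefore suffices to prove the upper bound
\[
P(M_N > x) \;\le\; (1 + o(1)) \, E\!\left[ \sum_{i=1}^N \overline{F}(x/C_i) \right]
\]
together with a matching lower bound for $P(S_N > x)$; these are precisely the targets of Sections~\ref{S.UpperBound} and~\ref{S.LowerBound}. The IR property will absorb the multiplicative $(1 \pm \delta)$ distortions at the end by letting $\delta \downarrow 0$.

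For the upper bound I would condition on $(N, C_1, C_2, \dots)$ and, for a small $\theta > 0$, split each summand as $C_i X_i = C_i X_i \Indicator(X_i > \theta x / C_i) + C_i X_i \Indicator(X_i \le \theta x / C_i)$. A union bound on the ``big jump'' part yields $E[\sum_i \overline{F}(\theta x / C_i)]$, which by IR is asymptotic to $E[\sum_i \overline{F}(x/C_i)]$ as $\theta \uparrow 1$. One then needs to show that the running supremum of the truncated partial sums exceeds $(1 - \theta) x$ only with negligible probability. The three drift cases enter here: under~(a) the negative drift gives a direct random-walk bound; under~(b) with $E[X_1] = 0$, a Kolmogorov/Fuk--Nagaev-type inequality combined with $P(Z_N > x) = O(\overline{F}(x))$ controls the centered partial-sum maximum; under~(c) with $E[X_1] > 0$, the law of large numbers gives $\sup_k \sum_{i \le k} C_i X_i \Indicator(X_i \le \theta x / C_i) \approx E[X_1] Z_N$, and $P(Z_N > x) = o(\overline{F}(x))$ kills this contribution.

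For the lower bound I would apply Bonferroni to the events $A_i = \{ C_i X_i > (1+\delta) x, \; \sum_{j \ne i} C_j X_j > -\delta x \}$:
\[
P(S_N > x) \;\ge\; \sum_{i=1}^N P(A_i) \;-\; \sum_{1 \le i < j \le N} P\bigl(C_i X_i > \delta x,\; C_j X_j > \delta x\bigr).
\]
Conditioning on $(N, C_1, C_2, \dots)$, the ``others do not crash'' event inside $A_i$ has conditional probability tending to $1$ by the case-specific control of the residual drift, so the leading term becomes $(1+o(1)) E[\sum_i \overline{F}((1+\delta) x / C_i)]$. The pairwise intersection terms are $o$ of the leading term because each factor decays like $\overline{F}(x)$ and the weight-moment hypotheses render the double sum summable. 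Letting $\delta \downarrow 0$ and invoking IR concludes the argument.

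The main obstacle is the case $N = \infty$, where one cannot simply pass a finite-$N$ bound through a limit. Here the Matuszewska-index Potter bounds $\overline{F}(x/c) \le K_1 c^{\beta_f + \epsilon} \overline{F}(x)$ for $c \ge 1$ and $\overline{F}(x/c) \le K_2 c^{\alpha_f - \epsilon} \overline{F}(x)$ for $c \le 1$, together with the moment hypotheses $E[\sum_{i=1}^N C_i^{\beta_f + \epsilon}] < \infty$ and $E[\sum_{i=1}^N C_i^{\alpha_f - \epsilon}] < \infty$, provide the dominated-convergence device needed to truncate the infinite sum at some level $K$ whose tail contribution is at most $\varepsilon \cdot E[\sum_i \overline{F}(x/C_i)]$ uniformly in $x$. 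This same two-sided Potter control is also what renders the cross-term bounds and the small-jump variance estimates quantitative in the earlier steps.
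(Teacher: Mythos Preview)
Your architecture matches the paper's: sandwich $P(S_N>x)\le P(M_N>x)$ between a Bonferroni lower bound and a truncation-plus-union-bound upper bound, use IR to absorb the $(1\pm\delta)$ distortions, and invoke the two-sided Potter bounds together with the moment hypotheses $E[\sum C_i^{\alpha_f-\epsilon}]$, $E[\sum C_i^{\beta_f+\epsilon}]<\infty$ as the dominated-convergence device. Two cosmetic differences: the paper controls the truncated running maximum by an exponential change-of-measure bound (Lemma~\ref{L.TruncBound}) rather than Fuk--Nagaev, and it lets $\delta=1/\sqrt{\log x}$ so the IR squeeze happens in the single limit $x\to\infty$ instead of a separate $\delta\downarrow 0$.

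There is, however, a real gap in your treatment of case~(a). You write that ``the negative drift gives a direct random-walk bound'', but in case~(a) there is \emph{no} tail assumption on $Z_N$ whatsoever; $P(Z_N>x)$ may be much heavier than $\overline{F}(x)$. Conditionally on the weights, the truncated increments are neither i.i.d.\ nor uniformly bounded independently of $x$, and any Chernoff/Fuk--Nagaev bound on the running maximum will carry a rate that depends on $Z_N$; integrating over the weights without control on $P(Z_N>x)$ does not produce $o(\overline{F}(x))$. Equally, the supremum is over all $k$, so even when $Z_N$ is enormous and the \emph{final} drift $\mu Z_N$ is very negative, the walk may have already exceeded $x$ at some earlier $k$ with $Z_k$ moderate. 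The paper closes this with a genuinely additional idea (Proposition~\ref{P.UpperBounds}, Case~2): set $\tau=\sup\{n\le N: Z_n\le\kappa x\}$ for a large constant $\kappa$; on $\{k\le\tau\}$ the exponential bound applies because $Z_\tau\le\kappa x$ is built in; on $\{k>\tau\}$ shift to $\widetilde X_i=X_i-\mu/2$, so that $S_k=\widetilde S_k-|\mu|Z_k/2\le \widetilde S_k-|\mu|\kappa x/2$, which raises the target for the mean-zero walk $\widetilde S$ to $(1+|\mu|\kappa/2)x$ and lets the exponential bound go through with $(\mu/2+\text{error})^+=0$ and hence no $Z_N$ term in the exponent. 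Without this stopping-time-plus-drift-shift device (or an equivalent), your case~(a) upper bound does not close.
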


{\bf Remark:} It is known that when $N = n$ it is enough to have $E\left[ \sum_{i=1}^N C_i^{\beta_f+\epsilon} \right] < \infty$ for \eqref{eq:OnlySummands} to hold (see \cite{Wang_Tang_06, Zhang_Shen_Weng_09}). Note that for a finite number of terms this moment condition on the weights implies that
$$\left( E\left[Z_n^{\beta_f+\epsilon} \right] \right)^{1/(\beta_f+\epsilon)} \leq  \sum_{i=1}^n \left(E\left[C_i^{\beta_f+\epsilon}\right] \right)^{1/(\beta_f+\epsilon)} < \infty,$$
which in turn implies that $P(Z_n > x) = o\left(\overline{F}(x) \right)$ (since $x^{\beta_f+\epsilon} \overline{F}(x) \to \infty$). However, for $N = \infty$ and $\beta_f \geq 1$, the existing literature (e.g., \cite{Res_Will_91, Goovaerts_etal_05, Zhang_Shen_Weng_09}), which assumes $\overline{F} \in ER(-c,-d)$, requires the conditions $\sum_{i=1}^\infty \left( E\left[C_i^{d+\epsilon}\right] \right)^{1/(d+\epsilon)} < \infty$ and $\sum_{i=1}^\infty \left( E\left[C_i^{c-\epsilon}\right] \right)^{1/(d+\epsilon)} < \infty$, which again imply that $E\left[Z_\infty^{\beta_f+\epsilon}\right] < \infty$. In view of Theorem \ref{T.SummandsDominate}, the existing conditions are clearly too strong, and a simple example where \eqref{eq:OnlySummands} holds but $\sum_{i=1}^\infty \left( E\left[C_i^{d+\epsilon}\right] \right)^{1/(d+\epsilon)} = \infty$ is given below. Moreover, that the conditions of Theorem \ref{T.SummandsDominate} are close to being the best possible will follow from Theorem \ref{T.Both}. 

\begin{myexample}
Suppose that as $x \to \infty$, $\overline{F}(x) \asymp x^{-\alpha}$ for some $\alpha > 1$, $P(N> x) \asymp \overline{F}(x)$, and $E[X_1] = 0$. Furthermore, assume that the $\{C_i\}$ are i.i.d., independent of $N$, with $E[C_1^{\alpha+\epsilon}] < \infty$. Now write $\widetilde{C}_i = C_i \Indicator(i \leq N)$ so that $\sum_{i=1}^N C_i X_i = \sum_{i=1}^\infty \widetilde{C}_i X_i$, and note that for some constant $K > 0$, 
$$\sum_{i=1}^\infty \left( E\left[ \widetilde{C}_i^{\alpha+\epsilon} \right]  \right)^{1/(\alpha+\epsilon)} = \left( E[ C_1^{\alpha+\epsilon}] \right)^{1/(\alpha+\epsilon)} \sum_{i= 1}^\infty  P(N \geq i)^{1/(\alpha+\epsilon)} \geq  \sum_{i=1}^\infty \frac{K}{i^{\alpha/(\alpha+\epsilon)}} = \infty. $$
\end{myexample}

{\bf Remarks:} (i) The conditions of Theorem \ref{T.SummandsDominate} are very similar to those of Theorem 1 in \cite{Den_Foss_Kor_10} once we replace the random time $\tau$ by the random sum of the weights $Z_N = \sum_{i=1}^N C_i$. (ii) The stronger condition $E[|X_1|^{1+\epsilon}] < \infty$, instead of only $E[|X_1|] < \infty$, might be avoidable with a different proof technique.

The next result corresponds to the case where the behavior of the weighted random sum and its maximum might be influenced by both the one-big-jump principle and the distribution of the sum of the weights. This case also illustrates that when $E[X_1] > 0$, the conditions from Theorem \ref{T.SummandsDominate} are the best possible. 

\begin{thm} \label{T.Both}
Suppose $\{X_i\}$ is a sequence of i.i.d. random variables with right tail distribution $\overline{F} \in IR$, Matuszewska indices $0 < \alpha_f \leq \beta_f < \infty$, $E[X_1] > 0$, and $E\left[|X_1|^{1+\epsilon}\right] < \infty$ for some $0 < \epsilon < \alpha_f$. Let $(N, C_1, C_2, \dots)$ be a nonnegative random vector independent of the $\{X_i\}$ with $N \in \mathbb{N} \cup \{\infty\}$ and satisfying $E\left[ \sum_{i=1}^N C_i^{\alpha_f - \epsilon} \right] < \infty$ and $E\left[ \sum_{i=1}^N C_i^{\beta_f + \epsilon} \right] < \infty$. If $E[N] < \infty$ then the condition $E\left[ \sum_{i=1}^N C_i^{\alpha_f - \epsilon} \right] < \infty$ can be dropped.  Let $Z_N = \sum_{i=1}^N C_i < \infty$ a.s. and suppose further that its tail distribution $\overline{G} \in IR$. Then, as $x \to \infty$, 
$$P\left( \sup_{1\leq k < N+1} \sum_{i=1}^k C_i X_i > x \right) \sim P\left( \sum_{i=1}^N C_i X_i > x \right) \sim E\left[ \sum_{i=1}^N \overline{F}(x/C_i) \right] + P\left( \sum_{i=1}^N C_i > x/E[X_1] \right).$$
\end{thm}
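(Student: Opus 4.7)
The strategy is to decompose $S_N = E[X_1]Z_N + W_N$, where $W_N := \sum_{i=1}^N C_i(X_i - E[X_1])$ is conditionally mean zero given $\mathcal{F} := \sigma(N, C_1, C_2, \dots)$. This separates the ways in which $M_N$ can exceed $x$ into two essentially disjoint mechanisms: a single ``big jump'' $C_i X_i$, producing the term $E[\sum_i \overline{F}(x/C_i)]$, and an atypically large $Z_N$ whose drift $E[X_1]Z_N$ alone clears the level $x$, producing the term $P(Z_N > x/E[X_1])$. The plan is to establish matching lower and upper bounds and let $\delta \to 0$, exploiting $IR$ of both $\overline{F}$ and $\overline{G}$ to absorb $(1\pm\delta)$ scalings via $\overline{F}((1\pm\delta)x/C_i) \sim \overline{F}(x/C_i)$ and $\overline{G}((1\pm\delta)x) \sim \overline{G}(x)$ as $\delta\to 0$.

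For the lower bound, fix $\delta\in(0,1)$ and write
\[
P(S_N > x) \geq P\!\left(S_N > x,\, Z_N \leq (1-\delta)x/E[X_1]\right) + P\!\left(S_N > x,\, Z_N > (1+\delta)x/E[X_1]\right).
\]
For the first summand, I would mirror the lower-bound construction from Section~\ref{S.LowerBound} underlying Theorem~\ref{T.SummandsDominate}: force one $C_iX_i$ to exceed $(1+\delta)x$ and apply a conditional weak law of large numbers to the remaining weighted sum, whose conditional drift is bounded by $(1-\delta)x$ on the restricted event. After inclusion--exclusion and $IR$, this delivers $(1-o(1))\, E[\sum_{i=1}^N \overline{F}(x/C_i)]$. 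For the second summand, the inclusion $\{Z_N > (1+\delta)x/E[X_1],\, W_N > -\delta x\} \subset \{S_N > x\}$ yields
\[
P\!\left(S_N > x,\, Z_N > (1+\delta)x/E[X_1]\right) \geq P\!\left(Z_N > (1+\delta)x/E[X_1]\right) - P(W_N \leq -\delta x).
\]
A conditional von~Bahr--Esseen inequality with exponent $1+\epsilon$ gives $E[|W_N|^{1+\epsilon}\mid\mathcal{F}] \leq K \sum_{i=1}^N C_i^{1+\epsilon}$, and interpolation between the moment bounds $E[\sum C_i^{\alpha_f-\epsilon}]<\infty$ and $E[\sum C_i^{\beta_f+\epsilon}]<\infty$ yields $E[\sum C_i^{1+\epsilon}]<\infty$. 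Combined with a quantitative $IR$ lower bound on $\overline{G}$, the resulting Markov estimate makes $P(W_N \leq -\delta x)$ genuinely $o\!\left(P(Z_N > x/E[X_1])\right)$, so this piece is $(1-o(1))\,P(Z_N > x/E[X_1])$.

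For the upper bound on $M_N$, partition
\[
P(M_N > x) \leq P\!\left(M_N > x,\, Z_N \leq (1-\delta)x/E[X_1]\right) + P\!\left(Z_N > (1-\delta)x/E[X_1]\right),
\]
where the second term equals $(1+o(1))\,P(Z_N > x/E[X_1])$ by $IR$ of $\overline{G}$ once $\delta\to 0$. On the restricted event in the first term the drift $E[X_1]Z_N$ is bounded by $(1-\delta)x$, so the upper bound for $M_N$ developed in Section~\ref{S.UpperBound}, whose hypotheses require only the stated moment conditions on the $\{C_i\}$ and on $X_1$, applies and returns at most $(1+o(1))\,E[\sum_{i=1}^N \overline{F}(x/C_i)]$ after a further use of $IR$. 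Combining the two pieces and letting $\delta\to 0$ matches the lower bound.

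The principal technical obstacle is the concentration of $W_N$ on $\{Z_N > (1+\delta)x/E[X_1]\}$: because the weights may be arbitrarily dependent, no unconditional law of large numbers is available, and one must work conditionally on $\mathcal{F}$ and then integrate against the (typically heavy-tailed) distribution of $Z_N$. Showing that the Markov bound for $W_N$ is genuinely smaller than $\overline{G}(x/E[X_1])$ will require a H\"older or Cauchy--Schwarz split of $E[\Indicator(Z_N > cx)\sum_i C_i^{1+\epsilon}]$, together with the $IR$ lower bound on $\overline{G}$ and the moment hypotheses. A secondary subtlety is verifying that the big-jump and big-$Z_N$ contributions are asymptotically disjoint so that the two terms simply add; this is built into the disjoint partition in the lower bound and absorbed by the $\delta$-slack in the upper bound.
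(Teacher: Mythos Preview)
Your global outline---split by whether $Z_N$ exceeds $\approx x/\mu$, extract the big-jump term from the small-$Z_N$ piece and the $P(Z_N>x/\mu)$ term from the large-$Z_N$ piece---is the paper's strategy, and your upper bound is essentially Proposition~\ref{P.UpperBounds}(a) read off directly. The gap is in the lower bound on $\{Z_N>(1+\delta)x/\mu\}$: a $(1+\epsilon)$-moment (von~Bahr--Esseen/Markov) estimate on $W_N$ is too weak here. The unconditional bound gives only $P(W_N\le-\delta x)=O(x^{-(1+\epsilon)})$, but the hypothesis $E[|X_1|^{1+\epsilon}]<\infty$ forces $\overline{F}(x)=o(x^{-(1+\epsilon)})$, so $\mathbf{R}+\overline{G}(x/\mu)$ can itself be $o(x^{-(1+\epsilon)})$ and the error swamps the target. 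Your proposed H\"older split of $E[\Indicator(Z_N>cx)\sum_i C_i^{1+\epsilon}]$ would need a moment of $\sum_i C_i^{1+\epsilon}$ of order strictly greater than~$1$, which is nowhere assumed; and the interpolation claim $E[\sum_i C_i^{1+\epsilon}]<\infty$ already fails when $1+\epsilon<\alpha_f-\epsilon$ (e.g.\ $\overline{F}\in\mathcal{R}_{-3}$, $\epsilon=1/2$). A concrete obstruction: with $N\equiv1$ and a single heavy-tailed weight $C_1$, on $\{C_1>(1+\delta)x/\mu\}$ one has $P(W_N\le-\delta x\mid C_1)=P\bigl(X_1<\mu-\delta x/C_1\bigr)\ge P\bigl(X_1<\mu/(1+\delta)\bigr)$, a fixed positive constant, so the error is $\asymp P(Z_N>x/\mu)$, not $o(P(Z_N>x/\mu))$.

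The paper's Lemma~\ref{L.ZLarge} closes this by replacing the polynomial Markov bound with the Nagaev-type exponential bound of Lemma~\ref{L.TruncBound}, after first restricting to $\{I_N(w)=0\}$ so that every $C_i\le x^{1-\nu}/\gamma_{1+\epsilon}$. The event $\{Z_N>(1+\delta)x/\mu\}$ is then split into a bounded shell $\{(1+\delta)x/\mu<Z_N\le 4x/\mu\}$, handled with the centered variables $\overline{X}_i=\mu-X_i$, and an unbounded tail $\{Z_N>4x/\mu\}$, handled with the negative-drift variables $\widehat{X}_i=\mu/2-X_i$ and a $Z_N$-dependent truncation level $v=\mu Z_N$. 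This produces error terms $Ke^{-\epsilon\nu\sqrt{\log x}}P(Z_N>x/\mu)$ and $Kx^{-\beta_f-\epsilon/2}$, which are genuinely $o(P(Z_N>x/\mu))$ and $o(\mathbf{R})$ respectively; a moment inequality of fixed order $1+\epsilon$ cannot produce decay faster than $x^{-(1+\epsilon)}$ and therefore cannot match this.
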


\bigskip

{\bf Remark:} If $\{X_i\}$ is a sequence of i.i.d. random variables from $\mathcal{R}_{-\alpha}$ with $\alpha > 1$, then $E\left[ \sum_{i=1}^N \overline{F}(x/C_i) \right]$ can be replaced with $E\left[ \sum_{i=1}^N C_i^\alpha \right] \overline{F}(x)$ in Theorems \ref{T.SummandsDominate} and \ref{T.Both}. In this setting, Theorems \ref{T.SummandsDominate} and \ref{T.Both} are generalizations of Breiman's Theorem to more than one summand and dependent weights. 

The third, and the last, result corresponds to the case where the behavior of the weighted random sum  is dominated solely by the sum of the weights. Note that it is not necessary for the $\{X_i\}$ to have any particular structure beyond certain moments and the condition $P(X_1 > x) = o(P(Z_N > x))$ as $x \to \infty$. 

\begin{thm} \label{T.ZDominates}
Let $(N, C_1, C_2, \dots)$ be a nonnegative random vector with $N \in \mathbb{N} \cup \{\infty\}$. Define $Z_N = \sum_{i=1}^N C_i < \infty$ a.s. and assume that it has a right tail distribution $\overline{G} \in IR$ with Matuszewska indices $0 < \alpha_g \leq \beta_g < \infty$. Suppose $\{X_i\}$ is a sequence of i.i.d. random variables, independent of $(N, C_1, C_2, \dots)$, with $E[X_1] > 0$, and $E\left[|X_1|^{1+\epsilon}\right] < \infty$ for some $0< \epsilon < \alpha_g$. Suppose further that $E\left[ \sum_{i=1}^N C_i^{\alpha_g -\epsilon} \right] < \infty$, $E\left[ \sum_{i=1}^N C_i^{\beta_g+\epsilon} \right] < \infty$, and $P(X_1 > x) = o\left( P(Z_N > x) \right)$. If $E[N] < \infty$ then the condition $E\left[ \sum_{i=1}^N C_i^{\alpha_g - \epsilon} \right] < \infty$ can be dropped. Then, as $x \to \infty$, 
$$P\left( \sup_{1\leq k < N+1} \sum_{i=1}^k C_i X_i > x \right) \sim P\left( \sum_{i=1}^N C_i X_i > x \right) \sim P\left( \sum_{i=1}^N C_i > x/E[X_1] \right).$$
\end{thm}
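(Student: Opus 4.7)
The plan is to decompose $\sum_{i=1}^k C_i X_i = \mu Z_k + R_k$, where $\mu = E[X_1] > 0$, $Z_k = \sum_{i=1}^k C_i$, and $R_k = \sum_{i=1}^k C_i (X_i - \mu)$. Because the weights are nonnegative and $\mu > 0$, the trajectory $k \mapsto \mu Z_k$ is nondecreasing and so $\sup_{1 \leq k < N+1} \mu Z_k = \mu Z_N$. The heuristic is that $\mu Z_N$ supplies the $\overline{G}(x/\mu)$ asymptotic via the IR property of $\overline{G}$, while the centered residual $R_k$ contributes only at lower order.

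For any $\delta \in (0,1)$, I would use the sandwich
\begin{align*}
P\!\left( \sup_{1 \leq k < N+1} \sum_{i=1}^k C_i X_i > x \right) &\leq P\bigl(\mu Z_N > (1-\delta) x\bigr) + P\!\left( \sup_{1 \leq k < N+1} R_k > \delta x \right), \\
P\!\left( \sum_{i=1}^N C_i X_i > x \right) &\geq P\bigl(\mu Z_N > (1+\delta) x\bigr) - P\bigl(R_N < -\delta x\bigr).
\end{align*}
The IR property of $\overline{G}$, in its symmetric two-sided form (which follows from the one-sided definition by a change of variables), gives $P(\mu Z_N > (1 \pm \delta) x)/\overline{G}(x/\mu) \to 1$ on taking $x \to \infty$ and then $\delta \downarrow 0$, so the theorem reduces to the single estimate
$$P\!\left( \sup_{1 \leq k < N+1} |R_k| > \delta x \right) = o\bigl(\overline{G}(x/\mu)\bigr), \qquad x \to \infty. \qquad (\star)$$

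To establish $(\star)$ I would condition on $\mathcal{C} = (N, C_1, C_2, \dots)$. Given $\mathcal{C}$, the process $\{R_k\}$ is a martingale with independent mean-zero increments $C_i(X_i - \mu)$, so Doob's maximal inequality transfers control of $\sup_k |R_k|$ to control of $|R_N|$. A conditional von Bahr--Esseen bound at exponent $p = 1 + \epsilon \in (1,2]$ yields
$$E\bigl[|R_N|^{1+\epsilon}\bigr] \leq 2\, E\bigl[|X_1 - \mu|^{1+\epsilon}\bigr]\, E\!\left[ \sum_{i=1}^N C_i^{1+\epsilon} \right],$$
with the prefactor finite after dominating $c^{1+\epsilon} \leq c^{\alpha_g - \epsilon} \vee c^{\beta_g + \epsilon}$ and invoking the two standing moment hypotheses on the weights.

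The main obstacle is that the naive Markov bound $P(|R_N| > \delta x) = O(x^{-(1+\epsilon)})$ need not beat the Matuszewska lower envelope $\overline{G}(x) \gtrsim x^{-\beta_g - \eta}$ once $\beta_g \geq 1 + \epsilon$. I would remedy this by truncating $X_i - \mu$ at a level $T_x \to \infty$ (a small power of $x$), splitting $R_N$ into a re-centered bounded-increment martingale $R_N'$ to which Rosenthal's inequality applies at exponent $\beta_g + \epsilon$, and a tail part $R_N''$ supported on $\{\exists i \leq N : |X_i - \mu| > T_x\}$, whose probability is of exactly the type $E[\sum_{i=1}^N \overline{F}(T_x/C_i)]$ handled by the upper bound lemma of Section~\ref{S.UpperBound}. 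Calibrating $T_x$ so that both contributions are $o(\overline{G}(x/\mu))$ --- using on the one hand $E[\sum_i C_i^{\beta_g + \epsilon}] < \infty$ combined with the $T_x$-dependent factor $E[|Y_1'|^{\beta_g + \epsilon}] \leq T_x^{\beta_g - 1} E[|Y_1|^{1+\epsilon}]$, and on the other the hypothesis $\overline{F}(x) = o(\overline{G}(x))$ --- is the genuinely delicate step and is where the combined strength of the moment and tail assumptions enters.
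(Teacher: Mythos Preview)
Your decomposition $S_k = \mu Z_k + R_k$ and the IR sandwich are correct, and the reduction to $(\star)$ is valid. The gap is in your sketch of $(\star)$ via Rosenthal. For $p = \beta_g + \epsilon > 2$, Rosenthal's inequality applied conditionally on $\mathcal{C}$ to the truncated martingale $R_N'$ produces, besides the term $\sum_i C_i^p\, E[|Y_1'|^p]$ that you handle, a second term $\bigl(\sum_i C_i^2\, E[(Y_1')^2]\bigr)^{p/2}$. After taking expectations this requires control of $E\bigl[(\sum_i C_i^2)^{p/2}\bigr]$, and that quantity is \emph{not} bounded by the hypotheses $E[\sum_i C_i^{\alpha_g-\epsilon}]<\infty$ and $E[\sum_i C_i^{\beta_g+\epsilon}]<\infty$: the latter are linear in the summands, whereas $(\sum_i C_i^2)^{p/2}$ is not. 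In fact $\sum_i C_i^2 \leq Z_N \sup_i C_i$, and $Z_N$ itself has $E[Z_N^{\beta_g+\epsilon}]=\infty$ by Lemma~\ref{L.Potter}(a), so without restricting $Z_N$ and $\sup_i C_i$ the bound blows up. The same issue resurfaces if you truncate $C_i(X_i-\mu)$ rather than $X_i-\mu$: the tail event $\{\exists\, i: C_i|X_i-\mu|>T_x\}$ has probability of order $E[\sum_i P(|X_1-\mu|>T_x/C_i)]$, and to make this $o(\overline{G}(x/\mu))$ with $T_x$ only a small power of $x$ you again need to restrict the $C_i$.

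The paper does not attempt a global moment bound on $R_N$. It works directly with $M_N$ and $S_N$, splitting according to the size of $Z_N$ and of $\sup_i C_i$ (the events $\{Z_N\le x/(\mu+\delta)\}$, $\{I_N(w)=0\}$), and on those restricted events applies a conditional \emph{exponential} bound for truncated sums (Lemma~\ref{L.TruncBound}) rather than a polynomial moment inequality; the complementary events are handled separately in Proposition~\ref{P.UpperBounds} and Lemma~\ref{L.ZLarge}. The one-big-jump contribution $E[\sum_i \overline{F}((1-\delta)x/C_i)]$ is then killed by the assumption $\overline{F}=o(\overline{G})$ together with Lemma~\ref{L.Potter}. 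If you want to salvage your route, you would have to first localize to $\{Z_N\le cx,\ \sup_i C_i\le w\}$ before applying any $p$-th moment inequality, and then deal with the complements --- at which point you are essentially rebuilding the paper's machinery.
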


\section{The upper bound} \label{S.UpperBound}

Before proceeding with the derivation of the auxiliary results that will be needed for the proofs of the main theorems, we state here the notation that will be used in the remainder of the paper, as well as the main assumption satisfied by the random variables $\{X_i\}$ and the vector $(N, C_1, C_2, \dots)$.

\begin{assumption} \label{A.Main}
Let $\{ X_i \}$ be a sequence of i.i.d. real-valued random variables with common tail distribution $\overline{F}(x) = P(X_1 > x)$ and finite mean $\mu = E[X_1]$, and let $(N, C_1, C_2, \dots)$ represent a nonnegative random vector, independent of the $\{X_i\}$, with $N \in \mathbb{N} \cup \{\infty\}$. The vector $(N, C_1, C_2, \dots)$ is assumed to be generally dependent and the weights $\{C_i\}$ are not necessarily identically distributed. 
\end{assumption}

We will also use $|| \cdot ||_p = (E[ | \cdot |^p ])^{1/p}$ to denote the $L_p-$norm, the operator $\#A$ to denote the cardinality of a set $A$, and the symbols $x \vee y = \max\{x, y\}$, $x \wedge y = \min\{x, y\}$.  The letter $K$ will be used to denote a generic positive constant, which is not always the same in different parts of the paper, i.e. $K = K+1$, $K = 2K$, etc. 

The following random variables will be used throughout the paper:
\begin{align*}
&S_k = \sum_{i=1}^k C_i X_i, \quad k \in \mathbb{N} \cup \{\infty\}, \\
&M_N = \sup_{1\leq k < N+1} S_k, \\
&Z_N = \sum_{i=1}^N C_i, \\
&I_N(t) = \#\{1 \leq i < N+1: C_i > t\}, \\
&J_N(t) = \#\{ 1 \leq i < N+1: C_i X_i > t\}, \\
&L_N(t) = \#\{1 \leq i < N+1: C_i X_i < -t\}.
\end{align*}
Note that when $N$ is finite a.s. the supremum in the definition of $M_N$ can be replaced by a maximum and all the ranges $1 \leq i < N+1$ can be replaced by $1 \leq i \leq N$. Recall that since the weights $\{C_i\}$ are nonnegative, the convention is that $\overline{F}(t/C_i) = 0$ and $F(-t/C_i) = 0$ for any $t \geq 0$ if $C_i = 0$. 

The first result in this section provides a bound for the partial maximum of sums of independent random variables with finite exponential moments.

\begin{lem} \label{L.ExpBound}
Let $\{V_i\}_{i \geq 1}$ be a sequence of independent random variables. Then, for all $\theta > 0$, 
$$P\left( \max_{1\leq k \leq m} \sum_{i=1}^k V_i > t \right) \leq e^{-\theta t} \prod_{i=1}^m \max\left\{1, E\left[ e^{\theta V_i } \right]  \right\}.$$
\end{lem}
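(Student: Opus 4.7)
The natural approach is a Chernoff-plus-Doob argument using an exponential supermartingale. First, if $E[e^{\theta V_i}] = \infty$ for some $i \leq m$, the product on the right of the claimed bound is infinite and there is nothing to prove, so I assume throughout that $E[e^{\theta V_i}] < \infty$ for all $1 \leq i \leq m$. Set $W_i = \max\{1, E[e^{\theta V_i}]\}$, let $S_k = \sum_{i=1}^k V_i$ with $S_0 = 0$, and define
$$M_k = \frac{e^{\theta S_k}}{\prod_{i=1}^k W_i}, \qquad 0 \leq k \leq m, \qquad M_0 = 1.$$
Let $\mathcal{F}_k = \sigma(V_1, \dots, V_k)$.

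Next I would verify that $(M_k)_{0 \leq k \leq m}$ is a nonnegative $(\mathcal{F}_k)$-supermartingale. By independence of the $V_i$,
$$E[M_{k+1} \mid \mathcal{F}_k] = M_k \cdot \frac{E[e^{\theta V_{k+1}}]}{W_{k+1}} \leq M_k,$$
since $W_{k+1} \geq E[e^{\theta V_{k+1}}]$ by definition. In particular $E[M_k] \leq E[M_0] = 1$ for all $k$.

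The final step is to translate the event $\{\max_{1 \leq k \leq m} S_k > t\}$ into an event about $M_k$ and apply Doob's maximal inequality for nonnegative supermartingales. Since each $W_i \geq 1$, for every $k \leq m$,
$$e^{\theta S_k} = M_k \prod_{i=1}^k W_i \leq M_k \prod_{i=1}^m W_i,$$
so
$$\left\{ \max_{1 \leq k \leq m} S_k > t \right\} = \left\{ \max_{1 \leq k \leq m} e^{\theta S_k} > e^{\theta t} \right\} \subseteq \left\{ \max_{1 \leq k \leq m} M_k > e^{\theta t} \Big/ \prod_{i=1}^m W_i \right\}.$$
Doob's inequality gives
$$P\left( \max_{0 \leq k \leq m} M_k > \lambda \right) \leq \frac{E[M_0]}{\lambda} = \frac{1}{\lambda}$$
for any $\lambda > 0$, and taking $\lambda = e^{\theta t}/\prod_{i=1}^m W_i$ yields the claimed bound.

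There is essentially no hard obstacle here. The only subtle point is the truncation $W_i = \max\{1, E[e^{\theta V_i}]\}$: taking the max with $1$ is what guarantees that the normalization is at least as large as $\prod_{i=1}^k W_i$ (so the passage $e^{\theta S_k} \leq M_k \prod_{i=1}^m W_i$ is valid) and simultaneously preserves the supermartingale property in the event some $E[e^{\theta V_i}] < 1$. Without this truncation one would either lose the supermartingale property or would be unable to extend the product from $k$ to $m$.
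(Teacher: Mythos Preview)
Your proof is correct. The paper takes a slightly different route: it normalizes by the exact moment generating functions, setting $L_k = \exp\{\theta S_k - \varphi_k(\theta)\}$ with $\varphi_k(\theta) = \sum_{i=1}^k \log E[e^{\theta V_i}]$, so that $(L_k)$ is an exponential \emph{martingale} rather than a supermartingale. It then applies an exponential change of measure (citing Asmussen, Chapter~XIII) at the stopping time $\tau = \inf\{k: S_k > t\}$, writing $P(\tau \leq m) = \widetilde{E}[L_\tau^{-1}\Indicator(\tau \leq m)]$ and bounding $L_\tau^{-1} = e^{-\theta S_\tau + \varphi_\tau(\theta)} \leq e^{-\theta t}\prod_{i=1}^m \max\{1,E[e^{\theta V_i}]\}$ on $\{\tau \leq m\}$. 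Your argument is more elementary: by building the truncation $\max\{1,\cdot\}$ into the normalizing product you get a supermartingale and can invoke Doob's maximal inequality directly, bypassing the change-of-measure machinery. Both approaches yield the same bound; yours is fully self-contained, while the paper's reflects the exponential-tilting viewpoint that is often useful for sharper large-deviation estimates.
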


\begin{proof}
The inequality trivially holds in case $E\left[ e^{\theta V_i} \right] = \infty$ for some $i$. Thus, we assume that $E\left[ e^{\theta V_i} \right] < \infty$ for all $i = 1, \dots, m$.  Let 
$$L_k = e^{\theta \sum_{i=1}^k V_i - \varphi_k(\theta) }, \qquad \varphi_k(\theta) = \log \prod_{i=1}^k E\left[ e^{\theta V_i}  \right].$$
Then $L_k$ is a nonnegative martingale satisfying $E[ L_k ] = 1$. Define $\tau = \inf\{ k \geq 1: \sum_{i=1}^k V_i > t\}$. Then, by Proposition 3.1 and Theorem 3.2 in Chapter XIII of \cite{Asm2003} there exists a probability measure $\widetilde{P}$ such that
\begin{align*}
P\left(   \max_{1\leq k \leq m} \sum_{i=1}^k V_i > t \right) &= P( \tau \leq m)   \\
&= \widetilde{E} \left[ L_{\tau}^{-1} \Indicator \left(  \tau \leq m \right)  \right] \\
&\leq  \widetilde{E} \left[ e^{-\theta \sum_{i=1}^\tau V_i } \Indicator \left(  \tau \leq m  \right)  \right]  \prod_{i=1}^m \max\left\{1, E\left[ e^{\theta V_i } \right]  \right\} \\
&\leq e^{-\theta t} \prod_{i=1}^{m} \max\left\{1, E\left[ e^{\theta V_i } \right]  \right\}.
\end{align*}
\end{proof}

The following result gives exponential bounds for sums of independent truncated random variables, and it follows the same classical heavy-tailed techniques from \cite{Nag82} and \cite{Bor00} (see also \cite{Borov_Borov_2008}). Note that all of the results in this and the next section are given for random variables satisfying only moment conditions, that is, neither the $\{X_i\}$ nor the vector $(N, C_1, C_2, \dots)$ are assumed to belong to any particular class of distributions.

\begin{lem} \label{L.TruncBound}
Suppose the $\{X_i\}$ and the vector $(N, C_1, C_2, \dots)$ satisfy Assumption \ref{A.Main} with  $\gamma_{\eta} = || X_1 ||_{\eta} < \infty$ for some $\eta > 1$. Then, for any $0 < u < v$ such that
$$\frac{1}{v} \leq \theta \triangleq \frac{(\eta-1)}{v} \log \left( \frac{v}{u}  \right) \leq \frac{1}{u}, $$
any $z > 0$, and any $A \subseteq \mathbb{R}$, we have
\begin{align*}
&P\left( \sup_{1\leq k < N+1} \sum_{i=1}^k C_i X_i \Indicator(C_i X_i \leq v) > z , \, Z_N  \in A, \, I_N(u/\gamma_{\eta}) = 0 \right) \leq E\left[ \Indicator\left( Z_N \in A \right) e^{-\theta  z + \left( \mu + \frac{K \gamma_{\eta}}{\log (v/u)} \right)^+ \theta Z_N  }  \right]  ,
\end{align*}
where $K = K(\eta) > 1$ is a constant that does not depend on the distributions of $X_1, N, C_1, C_2, \dots$. 
\end{lem}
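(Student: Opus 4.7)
The plan is to condition on $\mathcal{F}:=\sigma(N,C_1,C_2,\dots)$ so that the truncated summands $V_i:=C_iX_i\Indicator(C_iX_i\le v)$ are conditionally independent, apply Lemma~\ref{L.ExpBound} conditionally, and then reduce the statement to a per-summand bound on the conditional moment generating function. Working on $\{I_N(u/\gamma_\eta)=0\}$ guarantees $C_i\le u/\gamma_\eta$ for every active weight, which is the sole role of that indicator. Conditional Chernoff gives
\[
P\!\left(\sup_{1\le k<N+1}\sum_{i=1}^kV_i>z\,\Big|\,\mathcal{F}\right)\le e^{-\theta z}\prod_{i=1}^N\max\{1,E[e^{\theta V_i}\mid\mathcal{F}]\},
\]
with the $N=\infty$ case obtained by monotone convergence in $k$. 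Multiplying by the $\mathcal{F}$-measurable indicator $\Indicator(Z_N\in A,\,I_N(u/\gamma_\eta)=0)$ and taking expectation, the conclusion follows once I show $E[e^{\theta V_i}\mid\mathcal{F}]\le 1+\theta C_i(\mu+K\gamma_\eta/\log(v/u))$ on that event, because $\log\max\{1,1+y\}\le y^+$ and summation over $i$ then produces exactly the exponent $(\mu+K\gamma_\eta/\log(v/u))^+\theta Z_N$.

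To establish the per-summand bound, fix $c=C_i\in[0,u/\gamma_\eta]$ and write
\[
E[e^{\theta V_i}\mid C_i=c]-1=\theta c\,E[X_i\Indicator(X_i\le v/c)]+\int_{-\infty}^{v/c}(e^{\theta cx}-1-\theta cx)\,F(dx).
\]
The linear term is bounded by $\theta c\mu$ since $E[X_i\Indicator(X_i>v/c)]\ge 0$. For the nonnegative residual integral I split at $x=0$. On $[0,v/c]$ I use the sharp convexity inequality $(e^y-1-y)/y^2\le(e^{\theta v}-1-\theta v)/(\theta v)^2$ (valid because $y\mapsto(e^y-1-y)/y^2$ is nondecreasing on $(0,\infty)$) to obtain $\int_0^{v/c}(e^{\theta cx}-1-\theta cx)F(dx)\le (c/v)^2 e^{\theta v}E[X_i^2\Indicator(0<X_i\le v/c)]$. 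Combining with the heavy-tail truncated moment $E[X_i^2\Indicator(0<X_i\le v/c)]\le(v/c)^{2-\eta}\gamma_\eta^\eta$ for $\eta\le 2$ (and reducing the case $\eta>2$ to $\eta\wedge 2$ using $\gamma_{\eta\wedge 2}\le\gamma_\eta$ by Jensen), inserting $e^{\theta v}=(v/u)^{\eta-1}$, and using $c\le u/\gamma_\eta$, this collapses to $c\gamma_\eta/v=(\eta-1)^{-1}\theta c\gamma_\eta/\log(v/u)$. On $(-\infty,0]$ I split at $|x|=2/(\theta c)$ and use $e^y-1-y\le y^2/2$ for $|y|\le 2$ and $e^y-1-y\le -y$ for $y\le -2$; Markov's inequality at the $\eta$-th moment then gives $\int_{-\infty}^0\le C(\eta)(\theta c\gamma_\eta)^{\eta\wedge 2}$.

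Every intermediate estimate has the form $C(\eta)(\theta c\gamma_\eta)^\alpha$ for some exponent $\alpha\ge 1$. Since $\theta c\gamma_\eta\le\theta u=(\eta-1)(u/v)\log(v/u)$ and the map $t\mapsto t^{-(\alpha-1)}\log^\alpha(t)$ is uniformly bounded on $t=v/u\ge 1$ (any positive power of $t$ eventually dominates any polynomial in the logarithm), the extra power $(\theta c\gamma_\eta)^{\alpha-1}$ is at most $K(\eta)/\log(v/u)$, which supplies the advertised $1/\log(v/u)$ denominator in the per-summand bound and completes the argument via Step~1.

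The main obstacle is achieving the $1/\log(v/u)$ decay rather than a spurious multiplicative blowup such as $\log^{\eta-1}(v/u)$ that naive estimates yield. The decisive step is replacing the pointwise bound $e^{\theta cx}\le e^{\theta v}$ inside the integral by the convexity-based bound above: the former introduces an uncancellable $(v/u)^{\eta-1}$ factor across the entire integration range, whereas the latter concentrates the exponential growth at the single endpoint $x=v/c$, where it is precisely offset by the truncated moment $E[X_i^2\Indicator(X_i\le v/c)]\le(v/c)^{2-\eta}\gamma_\eta^\eta$, leaving only the correct polynomial decay in $v$.
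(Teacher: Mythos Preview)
Your overall strategy matches the paper's: condition on $\mathcal{F}$, apply Lemma~\ref{L.ExpBound} conditionally, bound each $E[e^{\theta V_i}\mid C_i]$ by $1+\theta C_i(\mu + K\gamma_\eta/\log(v/u))$ on $\{C_i\le u/\gamma_\eta\}$, then use $1+t\le e^t$ and sum over $i$. For $1<\eta\le 2$ your execution via the monotonicity of $y\mapsto(e^y-1-y)/y^2$ is correct and arguably cleaner than the paper's, which instead splits the integral for $Y_i=C_iX_i$ at $t=1/\theta$, bounds the lower piece by $K_1\theta^{\eta\wedge 2}C_i^{\eta\wedge 2}\gamma_{\eta}^{\eta\wedge 2}$, and controls the upper piece $\theta\int_{1/\theta}^v e^{\theta t}P(Y_i>t\mid C_i)\,dt$ via Markov's inequality at order $\eta$ to obtain $K_2 e^{\theta v}v^{-\eta}C_i^\eta\gamma_\eta^\eta$.

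The gap is your treatment of $\eta>2$. Replacing the truncated-moment bound by $E[X_i^2\Indicator(0<X_i\le v/c)]\le\gamma_2^2\le\gamma_\eta^2$ leaves the positive-part contribution at $(c/v)^2 e^{\theta v}\gamma_\eta^2$, and after using $c\gamma_\eta\le u$ this is $(c\gamma_\eta/v)(v/u)^{\eta-2}$, which for $\eta>2$ is \emph{not} $O(c\gamma_\eta/v)$ uniformly in $v/u$ (the constraints $1/v\le\theta\le 1/u$ still allow $v/u\to\infty$). The inequality $E[X_i^2\Indicator(0<X_i\le v/c)]\le(v/c)^{2-\eta}\gamma_\eta^\eta$ that you invoke for $\eta\le 2$ is simply false for $\eta>2$, since it reverses. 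The root cause is that your convexity step extracts only the second power of $x$, whereas near the endpoint $x=v/c$ one needs the full $\eta$-th power to cancel $e^{\theta v}=(v/u)^{\eta-1}$. In the paper this is achieved by keeping the range $t\in(1/\theta,v]$ separate and applying Markov at order $\eta$ there, producing the factor $e^{\theta v}v^{-\eta}u^{\eta-1}$ whose $v^{-\eta}$ cancels $e^{\theta v}$ for every $\eta>1$. Splitting your positive-part integral at $x=1/(\theta c)$ and handling the upper piece the same way would close the gap.
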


\begin{proof}
Let $X \stackrel{\mathcal{D}}{=} X_1$,  $Y_i = C_i X_i$ and $S_k^{(v)} = Y_1\Indicator(Y_1 \leq v) + \dots + Y_k \Indicator(Y_k \leq v)$.  By conditioning on $(N, C_1, C_2, \dots)$ we obtain 
\begin{align*}
&P\left( \max_{1\leq k \leq N\wedge n} S_k^{(v)} > z , \, Z_N \in A, \, I_N( u/\gamma_{\eta}) = 0 \right) \\
&= E\left[ \Indicator\left( Z_N \in A, \, I_N(u/\gamma_{\eta}) = 0 \right) P\left( \left.  \max_{1\leq k \leq N\wedge n} S_k^{(v)} >  z  \right| N \wedge n, C_1, \dots, C_{N\wedge n} \right)  \right] .
\end{align*}
Note that conditional on $(N \wedge n, C_1, \dots, C_{N\wedge n})$, $S_k^{(v)}$ is a sum of independent random variables, so by Lemma~\ref{L.ExpBound},
\begin{align*}
P\left( \left.  \max_{1\leq k \leq N\wedge n} S_k^{(v)} >  z  \right| N \wedge n, C_1, \dots, C_{N\wedge n} \right)  &\leq e^{-\theta z} \prod_{i=1}^{N\wedge n} \max\left\{1, E\left[ \left. e^{\theta Y_i \Indicator(Y_i \leq v)} \right| N, C_1, \dots, C_{N\wedge n} \right] \right\} \\
&=  e^{-\theta z} \prod_{i=1}^{N\wedge n} \max\left\{ 1, E\left[ \left. e^{\theta Y_i \Indicator(Y_i \leq v)} \right| C_i  \right] \right\}.
\end{align*}

We now bound the individual expectations using integration by parts as follows,
\begin{align*}
E\left[ \left. e^{\theta Y_i \Indicator(Y_i \leq v)} \right| C_i \right] &= E\left[ \left. e^{\theta Y_i} \Indicator(Y_i \leq v) \right| C_i \right] + E\left[ \left.  \Indicator(Y_i > v) \right| C_i \right] \\
&= \int_{-\infty}^v e^{\theta t} P(Y_i \in dt | C_i) + P(Y_i > v | C_i) \\
&= P(Y_i \leq 1/\theta|C_i) +\int_{-\infty}^{1/\theta} \theta t P(Y_i \in dt | C_i) + e P(Y_i > 1/\theta|C_i) -e^{\theta v} P(Y_i > v | C_i)   \\
&\hspace{5mm} + P(Y_i > v | C_i) +  \int_{-\infty}^{1/\theta} (e^{\theta t}-1-\theta t)  P(Y_i \in dt | C_i)  + \theta \int_{1/\theta}^v e^{\theta t} P(Y_i > t| C_i) dt \\
&\leq 1 + \theta E[Y_i|C_i] + eP(Y_i > 1/\theta|C_i) +   \int_{-\infty}^{1/\theta} (e^{\theta t}-1-\theta t)  P(Y_i \in dt | C_i) \\
&\hspace{5mm} + \theta \int_{1/\theta}^v e^{\theta t} P(Y_i > t| C_i) dt. 
\end{align*}
If $\eta \geq 2$ then the inequality $e^t - 1 - t\leq t^2 e^{t^+}$ for $t \in \mathbb{R}$ gives
$$\int_{-\infty}^{1/\theta} (e^{\theta t}-1-\theta t)  P(Y_i \in dt | C_i)  \leq e\theta^2 \int_{-\infty}^{\infty} t^2 P(Y_i \in dt |C_i) = e\theta^2 E[Y_i^2 | C_i].$$
If $1 < \eta < 2$, then integration by parts, a change of variables, Markov's inequality, and the same inequality used above give
\begin{align*}
\int_{-\infty}^{1/\theta} (e^{\theta t}-1-\theta t)  P(Y_i \in dt | C_i)  &= \theta \int_0^{\infty} (1-e^{-\theta u}) P(Y_i \leq -u|C_i) du + \int_0^{1/\theta} (e^{\theta t} - 1 - \theta t) P(Y_i \in dt | C_i) \\
&\leq \theta \int_0^\infty (1- e^{-\theta u}) E[|Y_i|^{\eta} | C_i] u^{-\eta} du + e\theta^2 \int_0^{1/\theta} t^2 P(Y_i \in dt|C_i) \\
&\leq E[|Y_i|^{\eta} |C_i] \left( \theta^2 \int_0^{1/\theta} \frac{1-e^{-\theta u}}{\theta u} \cdot u^{1-\eta} du + \theta \int_{1/\theta}^\infty u^{-\eta} du \right) \\
&\hspace{5mm} + e\theta^{\eta} \int_0^{1/\theta} t^{\eta} P(Y_i \in dt | C_i)  \\
&\leq E[|Y_i|^{\eta} | C_i ] \left( \theta^2 \int_0^{1/\theta} u^{1-\eta} du + \frac{\theta^{\eta}}{\eta-1}   + e\theta^{\eta} \right) \\
&= \theta^{\eta} E[|Y_i|^{\eta} |C_i] \left( \frac{1}{2-\eta} + \frac{1}{\eta-1} + e  \right),
\end{align*}
where in the third inequality we used the observation that $1 - e^{-t} \leq t$ for all $t \geq 0$. We then have that
\begin{equation} \label{eq:firstmoments}
\int_{-\infty}^{1/\theta} (e^{\theta t}-1-\theta t)  P(Y_i \in dt | C_i)  \leq K_1 \theta^{\eta\wedge 2} C_i^{\eta\wedge 2} E[|X|^{\eta\wedge2}],
\end{equation}
where $K_1 = K_1(\eta) = e + ((2-\eta)^{-1}+(\eta-1)^{-1}) \Indicator(1 < \eta < 2)$. Also, for $\eta > 0$ we use Markov's inequality to obtain
\begin{align*}
e P(Y_i > 1/\theta|C_i) + \theta \int_{1/\theta}^v e^{\theta t} P(Y_i > t| C_i) dt &\leq e E[|Y_i|^{\eta} | C_i] \theta^{\eta} + E[|Y_i|^{\eta} | C_i] \int_{1/\theta}^v \theta e^{\theta t} t^{-\eta} dt.
\end{align*}
To analyze the remaining integral we split it as follows, 
\begin{align*}
\theta \int_{1/\theta}^v e^{\theta t} t^{-\eta} dt &\leq \theta^{1+\eta} \int_{1/\theta}^{(1/\theta) \vee (v/2)} e^{\theta t} dt + \theta \int_{v/2}^v e^{\theta t} t^{-\eta} \, dt \\
&\leq \theta^{\eta} e^{\theta v/2} + \theta v^{1-\eta} \int_{1/2}^1 e^{\theta v u} u^{-\eta} \, du \\
&\leq \theta^{\eta} e^{\theta v/2} + \theta v^{1-\eta} 2^{\eta} \int_{1/2}^1 e^{\theta v u} \, du \\
&\leq \theta^{\eta} e^{\theta v/2} +  2^{\eta} e^{\theta v}  v^{-\eta}.
\end{align*}
Hence, 
\begin{equation} \label{eq:exptail}
eP(Y_i > 1/\theta|C_i) + \theta \int_{1/\theta}^v e^{\theta t} P(Y_i > t | C_i) dt \leq K_2 e^{\theta v} v^{-\eta} C_i^{\eta} E[| X|^{\eta}],
\end{equation}
where $K_2 = K_2(\eta) = \sup_{t \geq 1} \left( et^{\eta} e^{-t} + t^{\eta} e^{-t/2} + 2^{\eta} \right)$. Combining \eqref{eq:firstmoments} and \eqref{eq:exptail} we obtain 
\begin{align*}
&E\left[ \left. e^{\theta Y_i \Indicator(Y_i \leq v)} \right| C_i \right] \Indicator(C_i \leq  u/\gamma_{\eta}) \\
&\leq \left( 1 + \theta C_i E[X] + K_1 \theta^{\eta\wedge 2} C_i^{\eta\wedge 2} E[| X|^{\eta\wedge2}] + K_2 e^{\theta v} v^{-\eta} C_i^{\eta} E[|X|^{\eta}] \right) \Indicator(C_i \leq u/\gamma_{\eta}) \\
&\leq 1 + \theta C_i E[X] + C_i \left( K_1 \theta^{\eta\wedge2} (u/\gamma_{\eta})^{\eta\wedge2-1} E[|X|^{\eta\wedge2}] + K_2 e^{\theta v} v^{-\eta} (u/\gamma_{\eta})^{\eta-1} E[| X|^{\eta}]  \right) .
\end{align*} 
We now use the observation that $E[|X|^{\eta\wedge2}] = ||X||_{\eta\wedge2}^{\eta\wedge2} \leq || X||_{\eta}^{\eta\wedge2} = \gamma_{\eta}^{\eta\wedge 2}$ to obtain
\begin{align*}
E\left[ \left. e^{\theta Y_i \Indicator(Y_i \leq v)} \right| C_i \right] \Indicator(C_i \leq u/\gamma_{\eta}) &\leq 1 + \theta C_i \mu + C_i \gamma_{\eta} \left( K_1 \theta^{\eta\wedge 2} u^{\eta\wedge2-1} + K_2 e^{\theta v} v^{-\eta}  u^{\eta-1} \right) \\
&\leq 1 + \theta C_i \mu + \theta C_i \gamma_{\eta} K_3 \left( \theta^{\eta\wedge2-1} u^{\eta\wedge2-1} + e^{\theta v} v^{-\eta} \theta^{-1} u^{\eta-1} \right) \\
&\triangleq 1 + \theta C_i \mu +  \theta C_i \gamma_\eta a(\theta,u,v) ,
\end{align*} 
where $K_3 = K_3(\eta) = \max\{ K_1(\eta), K_2(\eta)\}$.  By using the inequality $1 + t \leq e^t$ for all $t \in \mathbb{R}$, it follows that
\begin{align}
&P\left( \max_{1\leq k\leq N\wedge n} S_k^{(v)} > z, \, Z_N \in A, \, I_N(u/\gamma_{\eta}) = 0 \right) \notag \\
&\leq E\left[ \Indicator \left( Z_N \in A \right) e^{-\theta z} \prod_{i=1}^{N\wedge n} \max\left\{ 1,  1 + \theta \mu C_i+ \theta a(\theta, u,v) \gamma_{\eta} C_i  \right\} \right] \notag \\
&\leq E\left[ \Indicator \left(  Z_N \in A \right) e^{-\theta z} \prod_{i=1}^{N\wedge n} \max\left\{1, e^{ \theta \mu C_i + \theta a(\theta,u,v) \gamma_{\eta} C_i} \right\} \right] \notag \\
&= E\left[ \Indicator\left(  Z_N \in A \right) e^{-\theta z +  \left(\mu + a(\theta,u,v)\gamma_\eta \right)^+ \theta Z_{N\wedge n}}  \right]  . \notag
\end{align}
Now choose 
$$\theta = \frac{1}{v} \log \left( (v/u)^{\eta-1} \right),$$
which by assumption satisfies $1/v \leq \theta \leq 1/u$, and note that
\begin{align*}
a(\theta, u,v) &= \frac{K_3}{(\eta-1)\log(v/u)} \left(1 + \frac{((\eta-1)\log(v/u))^{\eta\wedge2}}{(v/u)^{\eta\wedge2-1}} \right) \\
&\leq \frac{K_3}{(\eta-1) \log (v/u) } \left(1 + (\eta-1)^{\eta \wedge 2} \sup_{t \geq 1}  \frac{(\log t)^{\eta \wedge 2}}{t^{\eta\wedge 2-1}} \right).
\end{align*}
Defining $K = K(\eta) = \frac{K_3}{\eta-1} \left(1 + (\eta-1)^{\eta \wedge 2} \sup_{t \geq 1}  \frac{(\log t)^{\eta \wedge 2}}{t^{\eta\wedge 2-1}} \right)$ gives
\begin{align*}
&P\left( \max_{1\leq k\leq N\wedge n} S_k^{(v)} > z, \, Z_N \in A, \, I_N(u/\gamma_{\eta}) = 0 \right) \\
&\leq  E\left[ \Indicator\left( Z_N \in A \right) e^{-\theta z +  \left(\mu + \frac{K \gamma_{\eta}}{\log(v/u)} \right)^+ \theta Z_{N\wedge n} }  \right] .
\end{align*}
The result now follows by taking $n \to \infty$.  
\end{proof}

\bigskip

The main result of this section, given in Proposition \ref{P.UpperBounds}, provides upper bounds for $P(M_N > x)$. The idea of the proof is to split this probability into several smaller probabilities corresponding to the different possible behaviors of $Z_N$ and $J_N(\cdot)$. The bound derived in Lemma \ref{L.TruncBound} will be essential to the analysis of all the probabilities involving truncated summands. The lemma given below provides a bound for the probability of two or more summands being large. 

\begin{lem} \label{L.SecondMax}
Suppose the $\{X_i\}$ and the vector $(N, C_1, C_2, \dots)$ satisfy Assumption \ref{A.Main} with $\gamma_{1+\epsilon} = ||X_1||_{1+\epsilon} < \infty$ for some $\epsilon > 0$.  Let $0 < \nu < 1$, $w = x^{1-\nu}/\gamma_{1+\epsilon}$ and $y = x/\log x$. Fix $c > 0$. Then, there exist constants $K, x_0 > 0$ such that for all $x \geq x_0$, 
$$P\left( J_{N}(y) \geq 2, \, Z_N \leq cx, \, I_N(w) = 0 \right) \leq \frac{K (\log x)^{1+\epsilon} }{ x^{\epsilon\nu} } E\left[  \Indicator\left( I_N(w) = 0 \right) \sum_{i = 1}^N \overline{F}(y/ C_i)  \right] .$$
\end{lem}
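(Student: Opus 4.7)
The plan is a conditional second-moment argument combined with Markov's inequality for the truncation. Let $B_i = \Indicator(C_i X_i > y)$, so that $J_N(y) = \sum_{i=1}^N B_i$. Conditionally on $(N, C_1, C_2, \dots)$ the $\{B_i\}$ are independent with $E[B_i \mid C_i] = \overline{F}(y/C_i)$, and $\Indicator(J_N(y) \geq 2) \leq \binom{J_N(y)}{2} = \sum_{1 \leq i < j \leq N} B_i B_j$. Taking conditional expectations and using the elementary bound $\sum_{i<j} a_i a_j \leq \frac{1}{2}\bigl(\sum_i a_i\bigr)^2$ for nonnegative $a_i$ gives
$$P\bigl(J_N(y) \geq 2 \bigm| N, C_1, C_2, \dots\bigr) \leq \frac{1}{2}\left(\sum_{i=1}^N \overline{F}(y/C_i)\right)^2.$$

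The second step is to produce a deterministic upper bound on $\sum_{i=1}^N \overline{F}(y/C_i)$ valid on the event $\{Z_N \leq cx,\, I_N(w) = 0\}$. Markov's inequality applied to $|X_1|^{1+\epsilon}$ yields $\overline{F}(t) \leq \gamma_{1+\epsilon}^{1+\epsilon}\, t^{-(1+\epsilon)}$ for $t > 0$, so $\overline{F}(y/C_i) \leq \gamma_{1+\epsilon}^{1+\epsilon} C_i^{1+\epsilon}/y^{1+\epsilon}$. On $\{I_N(w) = 0\}$ every $C_i \leq w$, hence $C_i^{1+\epsilon} \leq w^{\epsilon} C_i$, and combined with $Z_N \leq cx$ this gives
$$\sum_{i=1}^N \overline{F}(y/C_i) \leq \frac{\gamma_{1+\epsilon}^{1+\epsilon}\, w^{\epsilon}}{y^{1+\epsilon}}\, Z_N \leq \frac{\gamma_{1+\epsilon}^{1+\epsilon}\, w^{\epsilon}\, c x}{y^{1+\epsilon}}.$$
Substituting $w = x^{1-\nu}/\gamma_{1+\epsilon}$ and $y = x/\log x$, the right-hand side collapses to $K'(\log x)^{1+\epsilon}\, x^{-\epsilon \nu}$ for some constant $K' = K'(c, \gamma_{1+\epsilon}, \epsilon) > 0$, valid for all $x$ past some threshold $x_0$.

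To conclude, I would combine the two bounds: keep one copy of $\sum_i \overline{F}(y/C_i)$ inside the expectation and replace the other factor by the deterministic bound on $\{Z_N \leq cx,\, I_N(w) = 0\}$, then drop the $\{Z_N \leq cx\}$ indicator from the surviving expectation (which only enlarges it). This produces exactly the claimed inequality with $K = K'/2$. There is no genuine obstacle in the proof; it is a routine conditional second-moment bound plus a single application of Markov. The one step worth flagging is the trade $C_i^{1+\epsilon} \leq w^{\epsilon} C_i$ on $\{I_N(w) = 0\}$, which converts the polynomial moment estimate into the linear quantity $Z_N \leq cx$ and is what ultimately produces the decisive $x^{-\epsilon \nu}$ decay.
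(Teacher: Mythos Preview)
Your proposal is correct and matches the paper's proof essentially step for step: both condition on $(N,C_1,C_2,\dots)$, bound $\Indicator(J_N(y)\geq 2)$ by $\sum_{i<j}B_iB_j$ to obtain $\bigl(\sum_i \overline{F}(y/C_i)\bigr)^2$, apply Markov's inequality to replace $\overline{F}(y/C_i)$ by $\gamma_{1+\epsilon}^{1+\epsilon}C_i^{1+\epsilon}/y^{1+\epsilon}$, and then use $C_i^{1+\epsilon}\leq w^{\epsilon}C_i$ on $\{I_N(w)=0\}$ together with $Z_N\leq cx$ to extract the $x^{-\epsilon\nu}$ factor from one copy of the sum while keeping the other inside the expectation. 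The only cosmetic difference is that the paper phrases the first step via the union bound on $\bigcup_{i<j}\{C_iX_i>y,\,C_jX_j>y\}$ rather than via $\binom{J_N(y)}{2}$.
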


\begin{proof}
We start by conditioning on $\mathcal{F} = \sigma(N, C_1, C_2, \dots)$ to obtain
\begin{align*}
&P\left( J_N(y) \geq 2, \, Z_N \leq cx, \, I_N(w) = 0 \right) \\
&= E\left[ \Indicator \left( Z_N \leq cx, \, I_N(w) = 0 \right) E\left[ \left.  \Indicator\left( J_N(y) \geq 2 \right) \right| \mathcal{F} \right] \right] \\
&= E\left[ \Indicator \left( Z_N \leq cx, \, I_N(w) = 0 \right) E\left[ \left.  \Indicator \left( \bigcup_{1\leq i < j < N+1} \left\{ C_i X_i > y, C_j X_j > y \right\} \right) \right| \mathcal{F} \right] \right] \\
&\leq E\left[ \Indicator \left( Z_N \leq cx, \, I_N(w) = 0 \right) \sum_{1 \leq i < j < N+1} E \left[  \left. \Indicator \left( C_i X_i > y, C_j X_j > y\right) \right| \mathcal{F} \right] \right]  \\
&= E\left[ \Indicator \left( Z_N \leq cx, \, I_N(w) = 0 \right) \sum_{1 \leq i < j < N+1} \overline{F}(y/ C_i) \overline{F}(y/C_j) \right] \\
&\leq E\left[ \Indicator \left( Z_N \leq cx, \, I_N(w) = 0 \right) \left( \sum_{i = 1}^N \overline{F}(y/ C_i) \right)^2 \right] ,  
\end{align*}
where in the third equality we used the conditional independence of the $\{C_i X_i\}$ given $\mathcal{F}$ and the independence of the $\{X_i\}$ and $(N, C_1, C_2, \dots)$. We now use Markov's inequality to obtain
\begin{align*}
\sum_{j=1}^N \overline{F}(y/C_j) &\leq y^{-1-\epsilon} E[|X_1|^{1+\epsilon}] \sum_{j=1}^N C_j^{1+\epsilon} \leq K y^{-1-\epsilon}  \left( \sup_{1\leq j <N+1} C_j \right)^{\epsilon}  Z_N.
\end{align*}
It follows that
\begin{align*}
E\left[ \Indicator \left( Z_N \leq cx, \, I_N(w) = 0 \right) \left( \sum_{i = 1}^N \overline{F}(y/ C_i ) \right)^2 \right] &\leq  K y^{-1-\epsilon} w^{\epsilon} x E\left[  \Indicator \left( I_N(w) = 0 \right) \sum_{i = 1}^N \overline{F}(y/ C_i )  \right] \\
&\leq \frac{K (\log x)^{1+\epsilon} }{ x^{\epsilon\nu} } E\left[  \Indicator \left( I_N(w) = 0 \right) \sum_{i = 1}^N \overline{F}(y/ C_i )  \right]  .
\end{align*}
\end{proof}

The next preliminary lemma shows that if the summands are heavily truncated, then the supremum of the sums is unlikely to be large.  

\begin{lem} \label{L.StrongTrunc}
Suppose the $\{X_i\}$ and the vector $(N, C_1, C_2, \dots)$ satisfy Assumption \ref{A.Main} with $\gamma_{1+\epsilon} = ||X_1||_{1+\epsilon} < \infty$ for some $\epsilon > 0$. Let $0 < \nu < 1$, $w = x^{1-\nu}/\gamma_{1+\epsilon}$, $y = x/\log x$ and $0 < 1/\sqrt{\log x} \leq \delta < 1$. Then,    
$$P\left( \sup_{1\leq k < N+1}  \sum_{i=1}^{k} C_i X_i \Indicator(C_iX_i\leq y) > \delta x,  \, Z_N \leq y , \, I_N(w) = 0 \right) = o\left(x^{-h} \right)$$
as $x \to \infty$, for any $h > 0$.  
\end{lem}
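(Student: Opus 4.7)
The strategy is a direct application of Lemma \ref{L.TruncBound}. Taking $\eta = 1+\epsilon$, I would set the truncation level $v = y = x/\log x$ and the weight bound $u = x^{1-\nu}$, so that $u/\gamma_{1+\epsilon} = w$ aligns the conditioning event $\{I_N(w)=0\}$ of Lemma \ref{L.TruncBound} with the one in the present statement; the threshold is $z = \delta x$ and the set is $A = [0,y]$ to encode $Z_N \leq y$. With these choices $\theta = (\epsilon/v)\log(v/u) = \epsilon(\log x/x)(\nu\log x - \log\log x)$, and the admissibility condition $1/v \leq \theta \leq 1/u$ reduces to two elementary comparisons between polylogarithmic factors and positive powers of $x$, both valid for $x$ large enough.

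Once admissibility is checked, Lemma \ref{L.TruncBound} combined with the monotonicity of the exponential (the coefficient of $Z_N$ in the exponent is nonnegative, and $Z_N \leq y$ on $A$) gives the deterministic upper bound
\[
\exp\!\left(-\theta\delta x + \left(\mu^+ + \frac{K\gamma_{1+\epsilon}}{\log(v/u)}\right)\theta y\right).
\]
A direct computation yields $\theta\delta x = \epsilon\nu\delta(\log x)^2(1+o(1))$, while the convenient identity $\theta y = \epsilon\log(v/u)$ implies $\mu^+\theta y = O(\log x)$ and forces the ``error'' term $(K\gamma_{1+\epsilon}/\log(v/u))\theta y$ to equal the bounded constant $K\gamma_{1+\epsilon}\epsilon$.

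Under the hypothesis $\delta \geq 1/\sqrt{\log x}$ the negative drift satisfies $\theta\delta x \geq \epsilon\nu(\log x)^{3/2}(1+o(1))$, which dominates all positive contributions of order $\log x$. Hence, for large $x$, the probability is bounded above by $\exp(-\tfrac{1}{2}\epsilon\nu(\log x)^{3/2}) = x^{-\tfrac{1}{2}\epsilon\nu\sqrt{\log x}}$, which is $o(x^{-h})$ for every fixed $h > 0$. The argument is essentially bookkeeping after the correct parameter choice in Lemma \ref{L.TruncBound}; the only point of care is that the lower bound $\delta \geq 1/\sqrt{\log x}$ is precisely strong enough to guarantee that $-\theta\delta x$ overwhelms the positive drift $\mu^+\theta y = O(\log x)$, so that truly superpolynomial decay is achieved.
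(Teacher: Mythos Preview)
Your proposal is correct and follows essentially the same route as the paper: apply Lemma~\ref{L.TruncBound} with $v=y$, $u=x^{1-\nu}$, $z=\delta x$, bound $Z_N$ by $y$ on the event in question, and observe that the resulting exponent is of order $-\epsilon\nu\delta(\log x)^2$, which under $\delta\geq 1/\sqrt{\log x}$ yields superpolynomial decay. Your bookkeeping via the identity $\theta y=\epsilon\log(v/u)$ is slightly tidier than the paper's, and replacing $(\mu+K\gamma_{1+\epsilon}/\log(v/u))^+$ by the larger $\mu^+ + K\gamma_{1+\epsilon}/\log(v/u)$ is a harmless weakening.
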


\begin{proof}
We use Lemma \ref{L.TruncBound} with $A = (-\infty, y]$, $v = y$, $z = \delta x$, and $u = x^{1-\nu}$.  Then
\begin{align*}
&P\left( \sup_{1\leq k <  N+1}  \sum_{i=1}^{k} C_i X_i \Indicator(C_iX_i\leq y) > \delta x, \, Z_N \leq y, \, I_N(w) = 0  \right) \\
&\leq E\left[ \Indicator(Z_N \leq y) e^{-\theta  \delta x + \left( \mu + \frac{K\gamma_{1+\epsilon}}{\log(y/u)} \right)^+ \theta Z_N  } \right] ,
\end{align*}
where $\theta = \frac{\epsilon}{y} \log(y/u) = \frac{\epsilon\log x}{x}  \log\left( \frac{x^\nu}{\log x} \right)$. Note that for this choice of $y$ and $u$ there exists $x_0 = x_0(\beta,\epsilon) > 0$ such that the conditions on $\theta$ required by Lemma \ref{L.TruncBound} are satisfied for all $x \geq x_0$.  Moreover, on the set $\{Z_N \leq y\}$ we have
\begin{align*}
-\theta \delta x + \left( \mu +  \frac{K\gamma_{1+\epsilon}}{\log(y/u)} \right)^+ \theta Z_N &\leq -\theta \delta x \left(1 - \frac{|\mu|y  \sqrt{\log x}}{x} - \frac{K \gamma_{1+\epsilon} y  \sqrt{\log x}}{x\log(x^\nu/\log x)} \right) \\
&\leq -\theta \delta x \left( 1 - \frac{|\mu|}{\sqrt{\log x}} - \frac{K\gamma_{1+\epsilon} }{\sqrt{\log x}} \right) \\
&\leq - \epsilon \nu \delta (\log x)^2 \left(1 - \frac{\log\log x}{\nu \log x} \right) \left( 1 - \frac{2K\gamma_{1+\epsilon}}{\sqrt{\log x}}  \right),
\end{align*}
where in the last inequality we used $|\mu| \leq E[|X_1|] \leq (E[|X_1|^{1+\epsilon}] )^{1/(1+\epsilon)} = \gamma_{1+\epsilon}$ and $K > 1$. We then have, for sufficiently large $x$, 
\begin{equation} \label{eq:TruncExpFinal}
E\left[ \Indicator(Z_N \leq y) e^{-\theta \delta x + \left( \mu + \frac{K\gamma_{1+\epsilon}}{\log(y/w)} \right)^+ \theta Z_N  } \right]  \leq e^{-\epsilon \nu \delta (\log x)^2 \varphi(x)},
\end{equation}
where $\varphi(x) = \left(1 - \frac{\log\log x}{\nu \log x} \right) \left( 1 - \frac{2K\gamma_{1+\epsilon}}{\sqrt{\log x}}  \right)$.  Since $\delta > 1/\sqrt{\log x}$, it holds that
$$e^{-\epsilon \nu\delta(\log x)^2\varphi(x)} \leq e^{-\epsilon \nu(\log x)^{3/2} \varphi(x)} = o\left( x^{h} \right)$$
as $x \to \infty$ for any $h > 0$. 
\end{proof}

\bigskip

The last preliminary lemma of this section provides a bound for the case when the summands are moderately truncated and $Z_N$ is not too large. 

\begin{lem} \label{L.InterTrunc}
Suppose the $\{X_i\}$ and the vector $(N, C_1, C_2, \dots)$ satisfy Assumption \ref{A.Main} with $\mu \geq 0$ and $\gamma_{1+\epsilon} = ||X_1||_{1+\epsilon} < \infty$ for some $\epsilon > 0$.  Let $0 < \nu < 1$, $w = x^{1-\nu}/\gamma_{1+\epsilon}$, $y = x/\log x$ and $0 < 1/\sqrt{\log x} \leq \delta <1$. Then,  as $x \to \infty$, 
\begin{align*}
&P\left( M_N > x, \,  J_N((1-\delta)x) = 0,  \, y < Z_N \leq x/(\mu+\delta), \, I_N(w) = 0  \right) \\
&= O\left( x^{-\epsilon\nu/2} P(Z_N > y) + e^{-\frac{\epsilon\nu\sqrt{\log x}}{\mu}} P(Z_N > x/(2\mu)) \Indicator(\mu > 0) \right) .
\end{align*}
\end{lem}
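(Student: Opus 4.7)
The plan is to first use the event $\{J_N((1-\delta)x)=0\}$ to replace the supremum by a supremum of truncated partial sums, and then invoke Lemma~\ref{L.TruncBound}. On that event every summand satisfies $C_iX_i\leq (1-\delta)x$, so the probability to be bounded is at most
$$P\Bigl(\sup_{1\leq k<N+1}\textstyle\sum_{i=1}^k C_iX_i\Indicator(C_iX_i\leq (1-\delta)x)>x,\; y<Z_N\leq x/(\mu+\delta),\; I_N(w)=0\Bigr),$$
which is precisely the object addressed by Lemma~\ref{L.TruncBound} with the parameters $v=(1-\delta)x$, $u=x^{1-\nu}$ (chosen so that $u/\gamma_{1+\epsilon}=w$), $\eta=1+\epsilon$, $z=x$, and $A=(y,x/(\mu+\delta)]$. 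With $\theta=(\epsilon/v)\log(v/u)$ one has $\theta x\sim \epsilon\nu\log x/(1-\delta)$ and $K\gamma_{1+\epsilon}/\log(v/u)=O(1/\log x)$; the sandwich $1/v\leq\theta\leq 1/u$ is routine to verify for large $x$. The lemma then yields
$$P(\mathrm{event})\leq E\Bigl[\Indicator(y<Z_N\leq x/(\mu+\delta))\exp\bigl(-\theta x+(\mu+O(1/\log x))\theta Z_N\bigr)\Bigr].$$

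The second step is to split the integration over $Z_N$ at $x/(2\mu)$ when $\mu>0$. On the range $y<Z_N\leq x/(2\mu)$ the exponent is bounded above by $-\theta x/2+o(\theta x)\leq -(\epsilon\nu/2)\log x\cdot(1+o(1))$, since $1/(1-\delta)\geq 1$; pulling the exponential factor out of the indicator and using $\Indicator(Z_N>y)$ yields the first summand $x^{-\epsilon\nu/2}P(Z_N>y)$. On the complementary range $x/(2\mu)<Z_N\leq x/(\mu+\delta)$ — which is non-empty only when $\delta<\mu$, otherwise the first piece already covers everything — the exponent is at most $-\theta x\cdot\delta(1-o(1))/(\mu+\delta)$, and substituting $\theta x\sim \epsilon\nu\log x/(1-\delta)$ with $\delta\geq 1/\sqrt{\log x}$ and $\delta\to 0$ gives decay at rate $\exp(-\epsilon\nu\sqrt{\log x}/\mu\cdot(1+o(1)))$ against $P(Z_N>x/(2\mu))$. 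When $\mu=0$ the set is $(y,x/\delta]$, and since the correction $O(\theta Z_N/\log x)=O(\theta x/(\delta\log x))=o(\theta x)$ one gets a bound $x^{-\epsilon\nu(1-o(1))}P(Z_N>y)$, which is absorbed into the first summand (the factor $\Indicator(\mu>0)$ removes the second).

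The main obstacle is careful bookkeeping of the $o(1)$ corrections arising from $K\gamma_{1+\epsilon}/\log(v/u)$ and from the factor $1/(1-\delta)$ hidden inside $\theta x$, combined with the requirement that the bound be uniform as $\delta$ ranges over $[1/\sqrt{\log x},1)$. The split at $x/(2\mu)$ is chosen precisely so that, in the scaling $\delta\to 0$, $\delta/\bigl((\mu+\delta)(1-\delta)\bigr)\to 1/\mu$, producing the constant $1/\mu$ in the exponent of the stretched-exponential term; this smoothly interpolates between the polynomial regime in the first range and the stretched-exponential regime in the second, giving the stated two-term $O(\cdot)$ bound.
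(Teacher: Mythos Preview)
Your proof follows the paper's exactly: apply Lemma~\ref{L.TruncBound} with $v=(1-\delta)x$, $u=x^{1-\nu}$, $z=x$, $A=(y,x/(\mu+\delta)]$, then treat the cases $\mu=0$ and $\mu>0$ separately and, in the latter, split the range of $Z_N$ at $x/(2\mu)$. One small slip in your final paragraph: $\delta/\bigl((\mu+\delta)(1-\delta)\bigr)\to 0$ as $\delta\to 0$, not $1/\mu$; the intended statement is that the prefactor $1/\bigl((\mu+\delta)(1-\delta)\bigr)\to 1/\mu$, which combined with $\delta\log x\geq\sqrt{\log x}$ gives the exponent $\epsilon\nu\sqrt{\log x}/\mu$.
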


\begin{proof}
Let $A = (y, x/(\mu+\delta))$, $v = (1-\delta)x$ and $u = x^{1-\nu}$. Then, by Lemma \ref{L.TruncBound}, we have 
\begin{align}
&P\left( M_N > x, \, J_N((1-\delta)x) = 0,  \, y < Z_N \leq x/(\mu+\delta) , \, I_N(w) = 0 \right) \notag \\
&\leq P\left( \sup_{1 \leq k < N+1} \sum_{i=1}^k C_i X_i \Indicator(C_i X_i \leq (1-\delta)x) > x, \, y < Z_N \leq x/(\mu+\delta) , \, I_N(w) = 0  \right) \notag \\
&\leq E\left[ \Indicator(y < Z_N \leq x/(\mu+\delta)) e^{-\theta x + \left( \mu + \frac{K\gamma_{1+\epsilon}}{\log((1-\delta)x^\nu)} \right)^+ \theta Z_N  } \right] , \label{eq:MGF_Z}
\end{align}
where $\theta = \frac{\epsilon}{(1-\delta)x} \log ((1-\delta) x^\nu)$. We now separate the rest of the analysis into two cases.

{\bf Case 1:} $\mu = 0$.

We have that \eqref{eq:MGF_Z} is bounded by
$$e^{-\theta x \left(1 - \frac{K\gamma_{1+\epsilon} }{\delta \log((1-\delta) x^\nu)} \right)} P(Z_N > y) \leq K e^{-\epsilon\nu \left(1 - \frac{K\gamma_{1+\epsilon} \sqrt{\log x} }{\log((1-\delta) x^\nu)} \right) \log x} P(Z_N > y) \leq \frac{K}{x^{\epsilon\nu/2}}  P(Z_N > y),$$
for sufficiently large $x$. 

{\bf Case 2:} $\mu > 0$.

Note that $\frac{K \gamma_{1+\epsilon}}{\log((1-\delta)x^\nu)} \theta Z_N \leq \frac{\epsilon K \gamma_{1+\epsilon}}{(1-\delta)(\mu+\delta)} < \infty$, so \eqref{eq:MGF_Z} is bounded by
\begin{align*}
K E\left[ \Indicator(y < Z_N \leq x/(\mu+\delta)) e^{-\theta \left( x - \mu Z_N \right) } \right] &\leq K  E\left[ \Indicator(y < Z_N \leq x/(\mu+\delta)) e^{-\frac{\epsilon \nu\log x}{x} \left( x - \mu Z_N \right) } \right] \\
&= \frac{K}{x^{\epsilon\nu}} E\left[ \Indicator(y < Z_N \leq x/(\mu+\delta)) e^{\frac{\epsilon\nu \mu \log x}{x} Z_N  } \right] .
\end{align*}
Now note that by writing $\Indicator(y < Z_N \leq x/(\mu+\delta)) \leq \Indicator(y < Z_N \leq x/(2\mu))+ \Indicator(x/(2\mu) < Z_N \leq x/(\mu+\delta))$ (if $\delta \geq \mu$ the second indicator is zero) we obtain
\begin{align*}
\frac{1}{x^{\epsilon\nu}} E\left[ \Indicator(y < Z_N \leq x/(\mu+\delta)) e^{\frac{\epsilon\nu \mu \log x}{x} Z_N  } \right] &\leq \frac{1}{x^{\epsilon\nu/2}} P(Z_N > y) + \frac{e^{\frac{\epsilon\nu \mu \log x}{\mu+\delta}}}{x^{\epsilon\nu}} P(Z_N > x/(2\mu)). 
\end{align*}
Since $x^{-\epsilon\nu} e^{\frac{\epsilon\nu \mu \log x}{\mu+\delta}} = e^{-\frac{\epsilon\nu \delta \log x}{\mu+\delta}} \leq  \exp\left\{-\frac{\epsilon\nu \sqrt{\log x}}{\mu + 1/\sqrt{\log x}} \right\} \leq K e^{-\frac{\epsilon\nu\sqrt{\log x}}{\mu}}$, the result follows. 
\end{proof}

\bigskip

We are now ready to provide upper bounds for $P(M_N > x)$. As mentioned earlier, the idea is to split the probability into all the different combinations of events relating $Z_N$ and $J_N(\cdot)$. We emphasize again that no particular structure on the distributions of $Z_N$ or the $\{X_i\}$ is imposed beyond moment conditions. 

\begin{prop} \label{P.UpperBounds}
Suppose the $\{X_i\}$ and the vector $(N, C_1, C_2, \dots)$ satisfy Assumption \ref{A.Main} with $\gamma_{1+\epsilon} = ||X_1||_{1+\epsilon} < \infty$ for some $\epsilon > 0$. In addition, assume that $E\left[ \sum_{i=1}^N C_i^{\beta+\epsilon} \right] < \infty$ for some $\beta > 0$. Then, there exist constants $K, x_0 > 0$ such that for all $x\geq x_0$ and $0 < 1/\sqrt{\log x} \leq \delta < 1$, 
\begin{enumerate} \renewcommand{\labelenumi}{\alph{enumi})}
\item For $\mu \geq 0$, 
\begin{align*}
P(M_N > x) &\leq E\left[ \Indicator\left( I_N(w) = 0 \right) \sum_{i=1}^N \overline{F}((1-\delta)x/C_i )  \right] + P\left( (\mu+\delta) Z_N > x   \right)  \\
&\hspace{5mm} + K \left( \frac{(\log x)^{1+\epsilon} }{ x^{\epsilon\nu} } E\left[  \Indicator\left( I_N(w) = 0 \right) \sum_{i = 1}^N \overline{F}(y/C_i )  \right]  + \frac{1}{x^{\beta+\epsilon/2}}  \right) \\
&\hspace{5mm} + K \left(  x^{-\epsilon\nu/2} P(Z_N > y)  + e^{-\frac{\epsilon\nu\sqrt{\log x}}{\mu}} P(Z_N > x/(2\mu)) \Indicator(\mu > 0) \right),
\end{align*}
\item For $\mu < 0$, 
\begin{align*}
P(M_N > x) &\leq E\left[ \Indicator\left( I_N(w) = 0 \right) \sum_{i=1}^N \overline{F}((1-\delta)x/C_i )  \right] \\
&\hspace{5mm} +  K \left( \frac{(\log x)^{1+\epsilon} }{ x^{\epsilon\nu} } E\left[  \Indicator\left( I_N(w) = 0 \right) \sum_{i = 1}^N \overline{F}(y/C_i )  \right] + \frac{1}{x^{\beta+\epsilon/2}}  \right),
\end{align*}
\end{enumerate}
where $y = x/\log x$, $\nu = \epsilon/(2(\beta+\epsilon))$ and $w = x^{1-\nu}/\gamma_{1+\epsilon}$. 
\end{prop}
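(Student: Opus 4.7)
The strategy is to decompose $P(M_N > x)$ according to the sizes of the individual weights $\{C_i\}$ and the weighted summands $\{C_i X_i\}$ relative to the thresholds $w$, $y$, and $(1-\delta)x$, and then apply Lemmas~\ref{L.SecondMax}, \ref{L.StrongTrunc}, and \ref{L.InterTrunc} to each piece. First I would isolate the ``large weight'' event; by Markov's inequality and the moment assumption,
$$P(I_N(w) \geq 1) \;\leq\; w^{-(\beta+\epsilon)} \, E\!\left[\sum_{i=1}^N C_i^{\beta+\epsilon}\right] \;=\; O\!\left(x^{-(1-\nu)(\beta+\epsilon)}\right),$$
and the choice $\nu = \epsilon/(2(\beta+\epsilon))$ gives $(1-\nu)(\beta+\epsilon) = \beta+\epsilon/2$, producing the $K/x^{\beta+\epsilon/2}$ contribution that appears in both (a) and (b).

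On the complement $\{I_N(w) = 0\}$ I would write
\begin{align*}
P(M_N > x, \, I_N(w) = 0) &\leq P(J_N((1-\delta)x) \geq 1, \, I_N(w) = 0) \\
&\quad + P(J_N(y) \geq 2, \, J_N((1-\delta)x) = 0, \, I_N(w) = 0) \\
&\quad + P(M_N > x, \, J_N(y) \leq 1, \, J_N((1-\delta)x) = 0, \, I_N(w) = 0).
\end{align*}
Conditioning on $\mathcal{F} = \sigma(N, C_1, C_2, \dots)$ and applying a union bound to the first piece produces the leading term $E[\Indicator(I_N(w) = 0) \sum_i \overline{F}((1-\delta)x/C_i)]$. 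For the second piece I split by $\{Z_N \leq x/(\mu+\delta)\}$ and apply Lemma~\ref{L.SecondMax} with $c = 1/(\mu+\delta)$, the complementary event being absorbed into the $P((\mu+\delta)Z_N > x)$ term. For the third piece, the key observation is that on $\{M_N > x, \, J_N(y) \leq 1, \, J_N((1-\delta)x) = 0\}$ at most one summand exceeds $y$ and such a summand contributes at most $(1-\delta)x$ to any partial sum, so the truncated process $\sup_k \sum_{i=1}^k C_i X_i \Indicator(C_i X_i \leq y)$ must exceed $\delta x$. Splitting that event by $\{Z_N \leq y\}$, $\{y < Z_N \leq x/(\mu+\delta)\}$, and $\{Z_N > x/(\mu+\delta)\}$ and invoking Lemma~\ref{L.StrongTrunc}, Lemma~\ref{L.InterTrunc}, and the $P((\mu+\delta)Z_N > x)$ bound respectively finishes part~(a).

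For $\mu < 0$ in part (b), the simplification is that in Lemma~\ref{L.TruncBound} the factor $(\mu + K\gamma_{1+\epsilon}/\log(v/u))^+$ vanishes once $x$ is large enough, since $\log(v/u) \to \infty$ for both of our choices $v \in \{y, (1-\delta)x\}$ paired with $u = x^{1-\nu}$; Lemma~\ref{L.TruncBound} can then be applied directly with $A = [0,\infty)$ and yields an $o(x^{-h})$ bound for the third piece with no restriction on $Z_N$. The Lemma~\ref{L.SecondMax} application for the second piece is handled with a fixed constant $c$, the residual $P(Z_N > cx)$ being absorbed into the $K/x^{\beta+\epsilon/2}$ term via Markov on an appropriate moment of $Z_N$. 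The main obstacle I expect is the bookkeeping across the many parameter choices $(u, v, w, y, \theta, c, \delta, \nu)$ so that every error term aligns with the two expressions in the proposition statement, and in particular the careful verification in the $\mu < 0$ case that no stray $Z_N$ term survives once the Matuszewska-style exponent in Lemma~\ref{L.TruncBound} is killed.
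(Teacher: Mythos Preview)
Your plan for part (a) is essentially the paper's argument with only cosmetic reordering of the decomposition, and it goes through.

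The gap is in part (b). You are right that when $\mu<0$ the factor $\bigl(\mu + K\gamma_{1+\epsilon}/\log(v/u)\bigr)^+$ in Lemma~\ref{L.TruncBound} vanishes for large $x$, so the truncated-sum piece needs no control on $Z_N$. But your treatment of the ``two big summands'' piece does need such control: Lemma~\ref{L.SecondMax} is stated and proved under the restriction $Z_N\le cx$, and you propose to dispose of the leftover $P(Z_N>cx)$ by Markov's inequality on ``an appropriate moment of $Z_N$.'' The hypothesis $E\bigl[\sum_{i=1}^N C_i^{\beta+\epsilon}\bigr]<\infty$ does not provide such a moment when $\beta+\epsilon>1$; in fact $Z_N$ may equal $+\infty$ a.s.\ (take $N=\infty$, $C_i=1/i$, any $\beta+\epsilon>1$), in which case $P(Z_N>cx)\equiv 1$ and nothing is absorbed. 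So this step fails in general.

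The paper handles this by introducing the random index $\tau=\sup\{\,n\le N: Z_n\le \kappa x\,\}$ with $\kappa=2(\beta+\epsilon)/(\nu\epsilon|\mu|)$, and splitting $M_N$ into $\sup_{k\le\tau}S_k$ and $\sup_{\tau<k<N+1}S_k$. On the first piece one has $Z_\tau\le\kappa x$ by construction, so Lemma~\ref{L.SecondMax} (and the truncated-sum lemma) apply with $N$ replaced by $\tau$. On the second piece one shifts to $\widetilde X_i=X_i-\mu/2$ (mean $-|\mu|/2$), uses $Z_{\tau+1}>\kappa x$ to turn $S_k>x$ into a truncated sum exceeding $(1+|\mu|\kappa/2)x$, and then applies Lemma~\ref{L.TruncBound} with $A=\mathbb{R}$; the choice of $\kappa$ is exactly what makes the resulting exponent beat $x^{-\beta-\epsilon/2}$. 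This stopping-time construction is the missing idea in your plan for $\mu<0$.
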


\begin{proof}
We separate the analysis into two cases, $\mu \geq 0$ and $\mu < 0$. 

{\bf Case 1:} $\mu \geq 0$.

We start by splitting the probability as follows:
\begin{align}
P\left( M_N > x \right) &\leq P\left( M_N > x, \, (\mu+\delta) Z_N \leq x \right)  + P\left( (\mu+\delta) Z_N > x  \right) \notag \\
&\leq P\left( M_N > x, \, J_N((1-\delta) x) = 0, \, (\mu+\delta) Z_N \leq x, \, I_N(w) = 0 \right) + P(I_N(w) \geq 1)  \label{eq:Negligible} \\
&\hspace{5mm} + P\left( J_N((1-\delta) x) \geq 1, \, I_N(w) = 0 \right)  + P\left( (\mu+\delta) Z_N > x   \right). \notag
\end{align}
Let $\mathcal{F} = \sigma(N, C_1, C_2, \dots)$ and recall that the $\{X_i\}$ are independent of $(N, C_1,C_2, \dots)$. Then, from the union bound we obtain,
\begin{align}
P\left( J_N((1-\delta) x) \geq 1, \, I_N(w) = 0 \right) &= E \left[ \Indicator\left( I_N(w) = 0 \right) E\left[ \left. \Indicator\left(\bigcup_{i=1}^N \{ C_i X_i > (1-\delta) x\}  \right) \right| \mathcal{F} \right] \right] \notag \\
&\leq E \left[ \Indicator\left( I_N(w) = 0 \right) \sum_{i=1}^N E\left[ \left. \Indicator\left(C_i X_i > (1-\delta) x \right) \right| \mathcal{F} \right] \right] \notag \\
&= E \left[ \Indicator\left( I_N(w) = 0 \right) \sum_{i=1}^N \overline{F}((1-\delta)x/C_i )  \right]. \label{eq:OneBigJump}
\end{align}

Applying the union bound, Fubini's Theorem, and the conditional Markov inequality, we obtain
\begin{align}
P\left(  I_N(w) \geq 1 \right) &= E\left[ \Indicator\left( \bigcup_{i=1}^N \{C_i > w\}  \right) \right] \leq E\left[ \sum_{i=1}^N \Indicator(C_i > w)  \right] \notag \\
&= \sum_{i=1}^\infty P\left(   C_i > w, \, N \geq i \right) = \sum_{i=1}^\infty E\left[ \Indicator(N \geq i) E[ \Indicator(C_i > w ) | N] \right] \notag  \\
&\leq \frac{1}{w^{\beta+\epsilon}} \sum_{i=1}^\infty E\left[  C_i^{\beta+\epsilon} \Indicator(N \geq i)  \right] =  \frac{\gamma_{1+\epsilon}^{\beta+\epsilon}}{x^{\beta+\epsilon/2} } E\left[ \sum_{i=1}^N C_i^{\beta+\epsilon}   \right] . \label{eq:CMax}
\end{align}
Now, to analyze the first probability in \eqref{eq:Negligible}, split it according to how many summands are greater than $y$ as follows:
\begin{align}
&P\left( M_N > x, \, J_N((1-\delta) x) = 0,  \, Z_N \leq y, \, I_N(w) = 0 \right) \notag \\
&\hspace{5mm} + P\left( M_N > x, \, J_N((1-\delta) x) = 0, \, y < Z_N \leq x/(\mu+\delta), \, I_N(w) = 0   \right)   \notag \\
&\leq P\left( M_N > x , \, J_N(y) = 0,  \, Z_N \leq y, \, I_N(w) = 0 \right) \label{eq:TruncSum1} \\
&\hspace{3mm} + P\left( M_N > x, \, J_N(y) = 1, \, J_N((1-\delta) x) = 0,   \, Z_N \leq y, \, I_N(w) = 0 \right) \label{eq:TruncSum2} \\
&\hspace{3mm} + P\left( M_N > x, \, J_N(y) \geq 2, \, J_N((1-\delta)x) = 0, \,  Z_N \leq y, \, I_N(w) = 0  \right) \label{eq:SecondMin} \\
&\hspace{3mm} + P\left( M_N > x, \,  J_N((1-\delta)x) = 0,  \, y < Z_N \leq x/(\mu+\delta), \, I_N(w) = 0  \right). \label{eq:Intermediate} 
\end{align}
We start by analyzing \eqref{eq:TruncSum2}, which we can bound by separating the summands in $M_N$ into those that are smaller than or equal to $y$ and those that are greater than $y$, as the following derivation shows,
\begin{align}
&P\left( M_N > x , \, J_N(y) = 1, \, J_N((1-\delta) x) = 0, \, Z_N \leq y, \, I_N(w) = 0  \right) \notag \\
&= P\left( \sup_{1\leq k < N+1} \left\{ \sum_{i=1}^{k} C_i X_i \Indicator(C_iX_i\leq y) + \sum_{i=1}^{k} C_i X_i \Indicator(C_iX_i >  y) \right\}   > x, \right. \notag \\
&\hspace{5mm} \left. \phantom{\left\{ \sum_i^k \right\}} J_N(y) = 1, \, J_N((1-\delta) x) = 0, \,  Z_N \leq y, \, I_N(w) = 0  \right) \notag  \\
&\leq P\left( \sup_{1\leq k < N+1}  \sum_{i=1}^{k} C_i X_i \Indicator(C_iX_i\leq y) > \delta x, \, Z_N \leq y, \, I_N(w) = 0  \right) . \label{eq:TruncLem2}
\end{align}
We can bound \eqref{eq:TruncSum1} similarly to obtain
\begin{align}
&P\left( M_N  > x, \, J_N(y) = 0, \, Z_N \leq y, \, I_N(w) = 0 \right) \notag \\
&= P\left( \sup_{1\leq k < N+1} \sum_{i=1}^k C_i X_i \Indicator(C_i X_i \leq y) > x , \, J_N(y) = 0, \, Z_N \leq  y, \, I_N(w) = 0 \right) \notag \\
&\leq P\left( \sup_{1\leq k < N+1} \sum_{i=1}^k C_i X_i \Indicator(C_i X_i \leq y) > x,  \, Z_N \leq y, \, I_N(w) = 0 \right). \label{eq:TruncLem1}
\end{align}
Clearly, \eqref{eq:TruncLem1} is not greater than \eqref{eq:TruncLem2}, and to bound \eqref{eq:TruncLem2} we use Lemma \ref{L.StrongTrunc} to obtain 
\begin{align*}
P\left( \sup_{1\leq k < N+1}  \sum_{i=1}^{k} C_i X_i \Indicator(C_iX_i\leq y) > \delta x,  \, Z_N \leq y, \, I_N(w) = 0  \right) &\leq  \frac{K}{x^{h}}
\end{align*}
for any $h > 0$ (in particular, $h = \beta+\epsilon/2$). 
 
By Lemma \ref{L.SecondMax} we have that \eqref{eq:SecondMin} is bounded by
$$P\left( J_{N}(y) \geq 2, \, Z_N \leq y, \, I_N(w) = 0 \right) \leq  \frac{K (\log x)^{1+\epsilon} }{ x^{\epsilon\nu} } E\left[  \Indicator\left( I_N(w) = 0 \right) \sum_{i = 1}^N \overline{F}(y/ C_i)  \right]  .$$

Finally, by Lemma \ref{L.InterTrunc}, \eqref{eq:Intermediate} is bounded by
\begin{align*}
&P\left( M_N > x, \,  J_N((1-\delta)x) = 0,  \, y < Z_N \leq x/(\mu+\delta), \, I_N(w) = 0  \right) \\
&\leq K  \left( x^{-\epsilon\nu/2} P(Z_N > y) + e^{-\frac{\epsilon\nu\sqrt{\log x}}{\mu}} P(Z_N > x/(2\mu)) \Indicator(\mu > 0) \right).
\end{align*}
This completes the case.

{\bf Case 2:} $\mu < 0$. 

The case of negative mean requires some additional work, since in order to use the preliminary lemmas we need to have some control over $Z_N$. For this purpose let $\kappa = 2(\beta+\epsilon)/(\nu\epsilon|\mu|)$ and define $\tau = \sup\{ 1 \leq n \leq N: Z_n \leq \kappa x \}$. Now split the probability of interest as follows:
\begin{align}
P\left( M_N > x \right) &\leq P\left( J_N((1-\delta) x) \geq 1, \, I_N(w) = 0  \right) + P\left( I_N(w) \geq 1 \right) \label{eq:OnesDone}   \\
&\hspace{5mm} + P\left( M_N > x, \,  J_N((1-\delta)x) = 0, \, I_N(w) = 0 \right), \notag
\end{align}
and note that the probabilities in \eqref{eq:OnesDone} are bounded by \eqref{eq:OneBigJump} and \eqref{eq:CMax}.  For the remaining probability we use the union bound to obtain
\begin{align}
&P(M_N > x, \, J_N((1-\delta)x) = 0, \, I_N(w) = 0) \notag \\
&\leq P\left( \sup_{1\leq k < \tau+1} S_k > x, \, J_N((1-\delta)x) = 0, \, I_N(w) = 0 \right) \label{eq:notLargeYet} \\
&\hspace{5mm} + P\left(  \sup_{\tau < k < N+1} S_k > x, \, J_N((1-\delta)x) = 0, \, I_N(w) = 0, \, \tau < N \right). \label{eq:veryLarge}
\end{align}
Since $\tau \leq N$ and $Z_\tau \leq \kappa x$, \eqref{eq:notLargeYet} is bounded by
$$P\left( M_\tau > x, \, J_\tau((1-\delta)x) = 0, \, Z_\tau \leq \kappa x, \, I_N(w) = 0 \right).$$
We now split this last probability in a similar way to the previous case:
\begin{align}
&P\left( M_\tau > x, \, J_\tau((1-\delta)x) = 0, \, Z_\tau \leq \kappa x, \, I_N(w) = 0 \right) \notag  \\
&\leq P\left( M_\tau > x, \, J_\tau(y) = 0, \, Z_\tau \leq \kappa x, \, I_\tau(w) = 0 \right) \label{eq:NegTrunc1} \\
&\hspace{5mm} + P\left( M_\tau > x, \, J_\tau(y) = 1, \, J_\tau((1-\delta)x) = 0, \, Z_\tau \leq \kappa x, \, I_\tau(w) = 0 \right) \label{eq:NegTrunc2} \\
&\hspace{5mm} + P\left( J_\tau(y) \geq 2, \, Z_\tau \leq \kappa x, \, I_N(w) = 0 \right) .\label{eq:NegSecondMin}
\end{align}

By using the same arguments from the case $\mu \geq 0$, we obtain that the sum of the probabilities in  \eqref{eq:NegTrunc1} and \eqref{eq:NegTrunc2} is bounded by
\begin{align*}
2 P\left( \sup_{1\leq k \leq \tau} \sum_{i=1}^k C_i X_i \Indicator(C_i X_i \leq y) > \delta x, \, Z_\tau \leq \kappa x, \, I_\tau(w) = 0 \right)  ,
\end{align*}
which by Lemma \ref{L.TruncBound} (with $u = x^{1-\nu}$, $v = y$, $A = (-\infty, \kappa x]$, and $N = \tau$) is in turn bounded by
$$2 E\left[ \Indicator(Z_\tau \leq \kappa x) e^{-\theta\delta x + \left( \mu +  \frac{K\gamma_{1+\epsilon}}{\log(x^\nu/\log x)} \right)^+ \theta Z_\tau } \right]  \leq 2 e^{-\theta \delta x}  , $$
for sufficiently large $x$ and $\theta = \frac{\epsilon \log x}{x} \log (x^\nu/\log x)$. We now note that since $\delta \geq 1/\sqrt{\log x}$, then $e^{-\delta \theta x} \leq e^{- \epsilon\sqrt{\log x} \log(x^\nu/\log x)} = o\left( x^{-\beta-\epsilon/2} \right)$ as $x \to \infty$. By adapting the proof of Lemma \ref{L.SecondMax} to substitute $N$ by $\tau$ but keeping the condition $I_N(w) = 0$, we obtain that \eqref{eq:NegSecondMin} is bounded by
$$\frac{K(\log x)^{1+\epsilon}}{x^{\epsilon\nu}} E\left[ \Indicator(I_N(w) = 0) \sum_{i=1}^\tau \overline{F}(y/C_i) \right] \leq \frac{K(\log x)^{1+\epsilon}}{x^{\epsilon\nu}} E\left[ \Indicator(I_N(w) = 0) \sum_{i=1}^N \overline{F}(y/C_i) \right] .$$

Finally, to analyze \eqref{eq:veryLarge} let $\widetilde{X}_i = X_i - \mu/2$, $\widetilde{S}_k = C_1 \widetilde{X}_1 + \dots + C_k  \widetilde{X}_k$, and note that we can write the probability as
\begin{align}
&P\left(  \sup_{\tau < k < N+1} ( \widetilde{S}_k  - |\mu| Z_k/2) > x, \, J_N((1-\delta)x) = 0, \, I_N(w) = 0, \, \tau < N \right) \notag \\
&\leq P\left(  \sup_{\tau < k < N+1} \widetilde{S}_k  - |\mu| Z_{\tau+1}/2 > x, \, J_N((1-\delta)x) = 0, \, I_N(w) = 0, \,\tau < N \right) \notag \\
&\leq P\left(  \sup_{\tau < k < N+1} \sum_{i=1}^k C_i \widetilde{X}_i \Indicator(C_i \widetilde{X}_i \leq (1-\delta)x + C_i |\mu|/2)  > (1 + |\mu| \kappa/2) x, \, I_N(w) = 0, \,\tau < N \right) \notag \\
&\leq P\left(  \sup_{1\leq k < N+1} \sum_{i=1}^k C_i \widetilde{X}_i \Indicator(C_i \widetilde{X}_i \leq (1-\delta)x + |\mu|w/2)  > (1 +|\mu| \kappa/2) x, \, I_N(w) = 0 \right). \label{eq:LastNegBd}
\end{align}
Applying Lemma \ref{L.TruncBound} (with $u = x^{1-\nu}$, $v = (1-\delta)x+|\mu|w/2$, and $A = \mathbb{R}$) gives that \eqref{eq:LastNegBd} is bounded by
$$E\left[ e^{-\phi (1+|\mu|\kappa/2) x + \left( \mu/2 + \frac{K \gamma_{1+\epsilon}}{\log (v/u)} \right)^+ \phi Z_N } \right] \leq e^{-\phi(1+|\mu|\kappa/2)x},$$
for sufficiently large $x$, where $\phi = \frac{\epsilon}{(1-\delta)x + |\mu|w/2} \log \left( (1-\delta)x^\nu+|\mu|/(2\gamma_{1+\epsilon}) \right)$. The last step is to note that
\begin{align*}
-\frac{(1+|\mu|\kappa/2)\epsilon x}{(1-\delta)x + |\mu|w/2} \log \left( (1-\delta)x^\nu+|\mu|/(2\gamma_{1+\epsilon}) \right) &=  -\frac{(1+|\mu|\kappa/2)\epsilon}{1-\delta } \log \left( (1-\delta)x^\nu  \right) + o(1) \\
&\leq -(1+|\mu|\kappa/2)\epsilon \log x^\nu + O(1) \\
&= -( \beta+\epsilon+\epsilon\nu)\log x + O(1)
\end{align*}
as $x \to \infty$, which implies that \eqref{eq:LastNegBd} is $o( x^{-\beta-\epsilon})$. This completes the proof. 
\end{proof}

\section{The lower bound} \label{S.LowerBound}

We give in this section lower bounds for the tail distribution of the randomly weighted and randomly stopped sum. The idea of the proof is to split the probability $P(S_N > x)$ into several different probabilities, similarly to what was done for the upper bounds, and just keep those that determine the asymptotics. The first lemma is a preliminary step for Lemma \ref{L.MaxLowerBound}, and the main lower bounds are given in Lemmas~\ref{L.MaxLowerBound} and \ref{L.ZLarge}. 

\begin{lem} \label{L.MaxAsym}
Suppose the $\{X_i\}$ and the vector $(N, C_1, C_2, \dots)$ satisfy Assumption \ref{A.Main} with $\gamma_{1+\epsilon} = ||X_1||_{1+\epsilon} < \infty$ for some $\epsilon > 0$. Let $0 < \nu < 1$, $w = x^{1-\nu}/\gamma_{1+\epsilon}$, $y = x/\log x$ and $\delta > 0$. Then, there exist constants $K, x_0 > 0$ such that for all $x \geq x_0$, 
\begin{align*}
&P\left(J_N((1+\delta)x) = 1, \, Z_N \leq y, \, I_N(w) = 0  \right) \geq E\left[   \Indicator \left( Z_N \leq y, I_N(w) = 0 \right) \sum_{i=1}^N \overline{F}((1+\delta)x/ C_i)   \right]  \\
&\hspace{5mm} -   \frac{K}{x^{\nu\epsilon} \log x} E\left[ \Indicator\left( I_N(w) = 0 \right) \sum_{i=1}^N \overline{F}(x/ C_i)   \right] . 
\end{align*}
\end{lem}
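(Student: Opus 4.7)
The plan is to use a second-order Bonferroni-type inequality at the indicator level, then condition on $\mathcal{F} = \sigma(N, C_1, C_2, \dots)$ to exploit the conditional independence of the $\{X_i\}$, and finally use the truncation events $\{I_N(w)=0\}$ and $\{Z_N \leq y\}$ to control the quadratic correction via a Markov-type moment bound of order $1+\epsilon$.

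First, set $A_i = \{C_i X_i > (1+\delta)x\}$ for $1 \leq i < N+1$, so that $J_N((1+\delta)x) = \sum_i \Indicator(A_i)$. Then the identity $\Indicator(J_N((1+\delta)x) = 1) = \Indicator\left(\bigcup_i A_i\right) - \Indicator(J_N((1+\delta)x) \geq 2)$ combined with the standard inclusion-exclusion bounds $\Indicator\left(\bigcup_i A_i\right) \geq \sum_i \Indicator(A_i) - \sum_{i<j} \Indicator(A_i \cap A_j)$ and $\Indicator(J_N((1+\delta)x)\geq 2) \leq \sum_{i<j} \Indicator(A_i \cap A_j)$ gives
$$\Indicator(J_N((1+\delta)x)=1) \geq \sum_{i=1}^N \Indicator(A_i) - 2 \sum_{1\leq i<j<N+1} \Indicator(A_i \cap A_j).$$

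Second, multiply by $\Indicator(E)$ where $E = \{Z_N \leq y, I_N(w) = 0\}$ (which is $\mathcal{F}$-measurable), take conditional expectation given $\mathcal{F}$, and use the conditional independence of the $\{X_i\}$ together with their independence from $(N,C_1,C_2,\dots)$ to obtain
$$P(J_N((1+\delta)x)=1, E \mid \mathcal{F}) \geq \Indicator(E) \sum_{i=1}^N \overline{F}((1+\delta)x/C_i) - 2\Indicator(E)\sum_{i<j} \overline{F}((1+\delta)x/C_i)\overline{F}((1+\delta)x/C_j).$$

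Third, I would bound the quadratic correction. Using $2\sum_{i<j} a_i a_j \leq (\sum_i a_i)^2$ for $a_i \geq 0$, and Markov's inequality with exponent $1+\epsilon$ (i.e.\ $\overline{F}(t/C_i) \leq \gamma_{1+\epsilon}^{1+\epsilon} C_i^{1+\epsilon}/t^{1+\epsilon}$), on the event $E$ we have
$$\sum_{i=1}^N \overline{F}((1+\delta)x/C_i) \leq \frac{\gamma_{1+\epsilon}^{1+\epsilon}}{((1+\delta)x)^{1+\epsilon}}\left(\sup_{1 \leq i < N+1} C_i\right)^{\epsilon} Z_N \leq \frac{\gamma_{1+\epsilon}^{1+\epsilon} w^{\epsilon} y}{((1+\delta)x)^{1+\epsilon}} = \frac{\gamma_{1+\epsilon}}{(1+\delta)^{1+\epsilon} x^{\nu\epsilon} \log x},$$
using $w = x^{1-\nu}/\gamma_{1+\epsilon}$ and $y = x/\log x$. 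Since $\overline{F}$ is non-increasing, $\overline{F}((1+\delta)x/C_i) \leq \overline{F}(x/C_i)$, so
$$2\Indicator(E)\sum_{i<j}\overline{F}((1+\delta)x/C_i)\overline{F}((1+\delta)x/C_j) \leq \frac{K}{x^{\nu\epsilon} \log x} \Indicator(E) \sum_{i=1}^N \overline{F}(x/C_i)$$
for some constant $K$ depending only on $\epsilon, \delta, \gamma_{1+\epsilon}$.

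Finally, taking unconditional expectation and using the trivial bound $\Indicator(E) \leq \Indicator(I_N(w)=0)$ to absorb the subtracted term into the form stated in the lemma, we get the claimed inequality. The only subtlety—and the place where the exponent $1+\epsilon$ is used essentially—is ensuring that the quadratic correction produced by Bonferroni is genuinely smaller than the leading term by a factor $x^{-\nu\epsilon}/\log x$; this is exactly what the Markov bound delivers on the joint truncation event $\{I_N(w)=0, Z_N \leq y\}$, and is the main structural step of the argument.
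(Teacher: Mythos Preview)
Your proof is correct and follows essentially the same route as the paper: both arguments condition on $\mathcal{F}$, obtain the lower bound $\sum_i \overline{F}((1+\delta)x/C_i) - 2\sum_{i<j}\overline{F}((1+\delta)x/C_i)\overline{F}((1+\delta)x/C_j)$ (the paper via the disjoint decomposition $B_i=\{C_iX_i>(1+\delta)x,\ \sup_{j\neq i}C_jX_j\leq(1+\delta)x\}$, you via Bonferroni at the indicator level), and then bound the quadratic correction by the same Markov$_{1+\epsilon}$ estimate on $\{I_N(w)=0,\,Z_N\leq y\}$. The only cosmetic difference is the way the factor-of-two correction is set up; everything else matches.
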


\begin{proof}
Let $B_i = \{ C_i X_i > (1+\delta) x, \, \sup_{1\leq j < N+1, j\neq i} C_j X_j \leq (1+\delta) x, \, Z_N \leq y, \, I_N(w) = 0 \}$ and note that the $B_i$'s are disjoint. Therefore,
\begin{align*}
P\left(J_N( (1+\delta)x ) = 1, \, Z_N \leq y, \, I_N(w)=0 \right) &= E\left[  \Indicator\left( \bigcup_{i=1}^N B_i \right) \right] = E\left[  \sum_{i=1}^N \Indicator\left( B_i \right) \right].
\end{align*}
Let $\mathcal{F} = \sigma(N, C_1, C_2, \dots)$ and use the independence of the $\{X_i\}$ and $\mathcal{F}$ to  obtain
\begin{align*}
&E\left[  \sum_{i=1}^N \Indicator\left( B_i \right) \right] \\
&= E\left[  \Indicator \left( Z_N \leq y, \, I_N(w) = 0 \right) \sum_{i=1}^N E \left[ \left.  \Indicator\left( C_i X_i > (1+\delta) x \right) \Indicator \left( \sup_{1\leq j < N+1, j \neq i} C_j X_j \leq (1+\delta)x \right)  \right| \mathcal{F} \right] \right] \\
&= E\left[  \Indicator \left( Z_N \leq y, \, I_N(w) = 0 \right) \sum_{i=1}^N E \left[ \left.  \Indicator \left( C_i X_i > (1+\delta) x \right)  \right| \mathcal{F} \right]   \right] \\
&\hspace{5mm} - E\left[  \Indicator \left( Z_N \leq y, \, I_N(w) = 0 \right) \sum_{i=1}^N E \left[ \left.  \Indicator\left( C_i X_i > (1+\delta) x \right) \Indicator \left( \sup_{1\leq j < N+1, j \neq i} C_j X_j > (1+\delta)x \right)  \right| \mathcal{F} \right] \right] \\
&\geq  E\left[   \Indicator \left( Z_N \leq y, \, I_N(w) = 0 \right) \sum_{i=1}^N \overline{F}((1+\delta)x/ C_i)  \right] \\
&\hspace{5mm} - E\left[  \Indicator \left( Z_N \leq y, \, I_N(w) = 0 \right) \sum_{1 \leq i \neq j < N+1}  E \left[ \left.  \Indicator\left( C_i X_i > (1+\delta) x \right)   \Indicator \left( C_j X_j > (1+\delta)x \right)  \right| \mathcal{F} \right] \right],
\end{align*}
where in the last step we used the union bound. To bound the last expectation note that the conditional independence of the $\{C_i X_i\}$ given $\mathcal{F}$ gives
\begin{align*}
&E\left[  \Indicator \left( Z_N \leq y, \, I_N(w) = 0 \right) \sum_{1 \leq i \neq j < N+1} E \left[ \left.  \Indicator\left( C_i X_i > (1+\delta) x \right)   \Indicator \left( C_j X_j > (1+\delta)x \right)  \right| \mathcal{F} \right] \right] \\
&\leq E\left[  \Indicator \left( Z_N \leq y, \, I_N(w) = 0 \right) \left( \sum_{i=1}^N \overline{F}((1+\delta)x/ C_i)  \right)^2 \right].
\end{align*}
Now use the same arguments from Lemma \ref{L.SecondMax} to see that this last term is bounded from above by
\begin{align*}
&\frac{K}{x^{1+\epsilon}} y w^\epsilon E\left[ \Indicator\left( I_N(w) = 0 \right) \sum_{i=1}^N \overline{F}((1+\delta)x/ C_i)  \right] \leq \frac{K }{x^{\nu \epsilon} \log x}  E\left[ \Indicator\left( I_N(w) = 0 \right) \sum_{i=1}^N \overline{F}(x/ C_i)  \right].
\end{align*}
\end{proof}

\bigskip

The following result provides the first of the two terms determining the asymptotic behavior of $P(S_N > x)$, the one corresponding to the one-big-jump principle. Lemma \ref{L.ZLarge} will give the term corresponding to the case where the sum of the weights, $Z_N$, is large. 

\begin{lem} \label{L.MaxLowerBound}
Suppose the $\{X_i\}$ and the vector $(N, C_1, C_2, \dots)$ satisfy Assumption \ref{A.Main} with $\gamma_{1+\epsilon} = ||X_1||_{1+\epsilon} < \infty$ for some $\epsilon > 0$.  Let $0 < \nu < 1$, $w = x^{1-\nu}/\gamma_{1+\epsilon}$, $y = x/\log x$ and $0 < 1/\sqrt{\log x} \leq \delta < 1$. Then, for any $h > 0$, there exist constants $K, x_0 > 0$ such that for all $x \geq x_0$, 
\begin{align*}
&P\left(S_N > x, \, J_N((1+\delta)x) \geq 1, \,  L_N(y) = 0, \, Z_N \leq y, \, I_N(w) = 0  \right) \\
&\geq E\left[   \Indicator \left( Z_N \leq y, \, I_N(w) = 0 \right) \sum_{i=1}^N \overline{F}((1+\delta)x/ C_i)  \right] \\
&\hspace{5mm} - K\left( \frac{(\log x)^\epsilon}{x^{\nu\epsilon}} E\left[ \Indicator\left( I_N(w) = 0 \right) \sum_{i=1}^N \overline{F}(x/ C_i) \right] + \frac{1}{x^{h}} \right).
\end{align*}
\end{lem}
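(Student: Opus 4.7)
The plan is to lower-bound the probability by restricting to the sub-event where exactly one summand exceeds $(1+\delta)x$, use the size of that big summand to reduce $\{S_N > x\}$ to a statement about the remaining partial sum, and bound the resulting bad event via a Chernoff argument applied to $-X_j$. Concretely, I would first restrict to $\{J_N((1+\delta)x) = 1\} \subseteq \{J_N((1+\delta)x) \geq 1\}$; writing $A = \{J_N((1+\delta)x) = 1,\, L_N(y) = 0,\, Z_N \leq y,\, I_N(w) = 0\}$ and letting $i^*$ denote the unique index on $A$ with $C_{i^*} X_{i^*} > (1+\delta)x$, set $W_N := \sum_{i \neq i^*} C_i X_i = S_N - C_{i^*} X_{i^*}$. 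Since $C_{i^*} X_{i^*} > (1+\delta)x$, the inequality $W_N > -\delta x$ is sufficient for $S_N > x$, giving $P(\text{LHS}) \geq P(A) - P(A \cap \{W_N \leq -\delta x\})$.

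For the main term $P(A)$, I would split
\[
P(A) \geq P(J_N((1+\delta)x) = 1,\, Z_N \leq y,\, I_N(w) = 0) - P(J_N((1+\delta)x) \geq 1,\, L_N(y) \geq 1,\, Z_N \leq y,\, I_N(w) = 0),
\]
applying Lemma \ref{L.MaxAsym} to the first probability to produce the announced main quantity. For the correction involving $L_N(y) \geq 1$, I would condition on $\mathcal{F} := \sigma(N, C_1, C_2, \ldots)$, apply the union bound over index pairs $(i,j)$ with $i$ contributing a large summand and $j$ a very negative one, use conditional independence of the $\{X_i\}$ given $\mathcal{F}$, and bound $F(-y/C_j) \leq K C_j^{1+\epsilon}/y^{1+\epsilon}$ by Markov. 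The inequalities $\sum_j C_j^{1+\epsilon} \leq w^\epsilon Z_N \leq w^\epsilon y$ on $\{Z_N \leq y, I_N(w) = 0\}$ together with $\overline{F}((1+\delta)x/C_i) \leq \overline{F}(x/C_i)$ then yield the first announced error term $K(\log x)^\epsilon/x^{\nu\epsilon}\, E[\Indicator(I_N(w)=0)\sum_i \overline{F}(x/C_i)]$.

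For the bad term $P(A \cap \{W_N \leq -\delta x\})$, I would condition on $\mathcal{F}$, decompose by the identity of $i^*$, and pass to $V_j := -X_j$. On $\{L_N(y) = 0\}$ the indicator $\Indicator(C_j V_j \leq y)$ is automatic for every $j$, so the relevant conditional probability becomes $p_i := P(\sum_{j \neq i} C_j V_j \Indicator(C_j V_j \leq y) \geq \delta x \mid \mathcal{F})$. By Chernoff combined with the MGF bound from the proof of Lemma \ref{L.TruncBound} applied to the $V_j$'s (with $v = y$, $u = x^{1-\nu}$, $\theta \asymp (\log x)^2/x$), one obtains $p_i \leq \exp(-K(\log x)^{3/2})$ on $\{Z_N \leq y\}$; the correction $K\theta Z_N = O(\log x)$ is dominated by $\theta \delta x \geq K(\log x)^{3/2}$ since $\delta \geq 1/\sqrt{\log x}$. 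A Markov computation analogous to the one above gives $\sum_i \overline{F}((1+\delta)x/C_i) \leq K x^{-\nu\epsilon}/\log x$ on $\{Z_N \leq y, I_N(w) = 0\}$, so taking expectation yields $P(A \cap \{W_N \leq -\delta x\}) = o(x^{-h})$ for every $h > 0$, which is absorbed into the $K/x^h$ term.

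The main obstacle is the Chernoff bound in the last step: because only $E[|X_1|^{1+\epsilon}] < \infty$ is assumed, $E[e^{\theta(-X_j)}]$ can be infinite in general, so the moment generating function is controlled only via the truncation $C_j V_j \leq y$ implied by $L_N(y) = 0$. The Taylor-expansion arguments from the proof of Lemma \ref{L.TruncBound} transfer essentially verbatim to $V_j = -X_j$ with mean $-\mu$; the key consistency check is that the chosen parameters $v, u, \theta$ satisfy the hypotheses $1/v \leq \theta \leq 1/u$ of L.TruncBound and that $\theta \delta x/\log x \to \infty$ uniformly in the regime $\delta \geq 1/\sqrt{\log x}$, which is what drives the super-polynomial decay $e^{-K(\log x)^{3/2}}$.
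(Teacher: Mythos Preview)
Your proposal is correct and follows the same three-term decomposition as the paper: the main term via Lemma~\ref{L.MaxAsym}, the $\{J_N \geq 1,\, L_N \geq 1\}$ cross term via a union bound plus Markov, and the bad event via the truncated-MGF machinery of Lemma~\ref{L.TruncBound}. The only difference is in the bad event: rather than restricting to $J_N = 1$, identifying $i^*$, and bounding the remaining sum via $V_j = -X_j$, the paper uses the inclusion $\{S_N \leq x,\, J_N((1+\delta)x) \geq 1,\, L_N(y) = 0\} \subseteq \bigl\{\sum_i C_i|X_i|\Indicator(C_i|X_i| \leq y) \geq \delta x\bigr\}$ and then invokes Lemma~\ref{L.StrongTrunc} with $X_i$ replaced by $|X_i|$, which sidesteps the decomposition by $i^*$ and any sign bookkeeping on the mean.
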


\begin{proof}
We start by noting that
\begin{align}
&P\left(S_N > x,\, J_N((1+\delta)x) \geq 1, \,  L_N(y) = 0, \, Z_N \leq y, \, I_N(w) = 0 \right) \notag \\
&=   P\left(J_N( (1+\delta)x) \geq 1, \, Z_N \leq y, \, I_N(w) = 0 \right)  \label{eq:MaxTerm} \\
&\hspace{5mm} -  P\left(J_N( (1+\delta)x) \geq 1, \, L_N(y) \geq 1, \, Z_N \leq y, \, I_N(w) = 0 \right)  \label{eq:MaxMinTerm} \\
&\hspace{5mm} -  P\left(S_N \leq x, \, J_N( (1+\delta)x) \geq 1, \, L_N(y) = 0, \, Z_N \leq y, \, I_N(w) = 0 \right)  .\label{eq:ExpSumTerm}
\end{align}
From Lemma \ref{L.MaxAsym} we obtain that \eqref{eq:MaxTerm} is greater than or equal to
$$E\left[   \Indicator \left( Z_N \leq y, \, I_N(w) = 0 \right) \sum_{i=1}^N \overline{F}((1+\delta)x/ C_i)  \right]  -   \frac{K}{x^{\nu\epsilon} \log x} E\left[ \Indicator\left( I_N(w) = 0 \right) \sum_{i=1}^N \overline{F}(x/ C_i)  \right] .$$

To bound \eqref{eq:MaxMinTerm} note that 
$$\left\{ J_N( (1+\delta)x) \geq 1, \, L_N(y) \geq 1 \right\} = \bigcup_{1 \leq i \neq j < N+1} \left\{C_i X_i > (1+\delta)x, \, C_j X_j < -y \right\}.$$
Now let $\mathcal{F} = \sigma(N, C_1,C_2, \dots)$ and use the union bound plus the conditional  independence of the $\{C_i X_i\}$ given $\mathcal{F}$ to obtain
\begin{align*}
&P\left( J_N( (1+\delta)x) \geq 1, \, L_N(y) \geq 1, \, Z_N \leq y, \, I_N(w) = 0 \right) \\
&= E\left[ \Indicator(Z_N \leq y, \, I_N(w) = 0) E\left[ \left. \Indicator\left( J_N( (1+\delta)x) \geq 1, \, L_N(y) \geq 1   \right) \right| \mathcal{F} \right] \right] \\
&\leq E\left[ \Indicator(Z_N \leq y, \, I_N(w) = 0) \sum_{1 \leq i \neq j < N+1} \overline{F}((1+\delta)x/ C_i) F(-y/C_j) \right] \\
&\leq E\left[ \Indicator(Z_N \leq y, \, I_N(w) = 0) \left( \sum_{i = 1}^N \overline{F}((1+\delta)x/ C_i) \right) \left( \sum_{j=1}^N F(-y/C_j) \right) \right] .
\end{align*}
The same arguments from Lemma \ref{L.SecondMax} now give that the last expectation is bounded by
\begin{align*}
\frac{K w^\epsilon }{y^\epsilon} E\left[ \Indicator\left( I_N(w) = 0 \right)  \sum_{i = 1}^N \overline{F}((1+\delta)x/ C_i)  \right] \leq \frac{K (\log x)^\epsilon }{x^{\nu\epsilon}} E\left[ \Indicator\left( I_N(w) = 0 \right) \sum_{i = 1}^N \overline{F}(x/ C_i)  \right] .
\end{align*}

Finally, to bound \eqref{eq:ExpSumTerm} note that 
\begin{align*}
&\left\{ S_N \leq x, \, J_N((1+\delta)x) \geq 1, \, L_N(y) = 0 \right\}  \\
&\subseteq \left\{  \sum_{i=1}^N C_i X_i 1(-y \leq C_i X_i \leq (1+\delta)x) \leq -\delta x \right\} \\
&\subseteq \left\{ \sum_{i=1}^N C_i X_i 1( C_i |X_i| \leq y) \leq -\delta x  \right\} \\
&\subseteq \left\{ \sum_{i=1}^N C_i |X_i| 1( C_i |X_i| \leq y ) \geq \delta x \right\},
\end{align*}
from where it follows that
\begin{align*}
&P\left(S_N \leq x, \, J_N( (1+\delta)x) \geq 1, \, L_N(y) = 0, \, Z_N \leq y, \, I_N(w) = 0 \right) \\
&\leq P\left( \sum_{i=1}^{N} C_i |X_i| \Indicator( C_i |X_i| \leq y) \geq \delta x, \, Z_N \leq y, \, I_N(w) = 0  \right) .
\end{align*}
Now apply Lemma \ref{L.StrongTrunc} with $X_i$ replaced by $|X_i|$ to obtain that 
$$P\left( \sum_{i=1}^{N} C_i |X_i| \Indicator( C_i |X_i| \leq y) \geq \delta x, \, Z_N \leq y, \, I_N(w) = 0 \right) = o\left( x^{-h} \right)$$
as $x \to \infty$ for all $h > 0$. 
 
Combining the bounds derived above for \eqref{eq:MaxTerm}, \eqref{eq:MaxMinTerm} and \eqref{eq:ExpSumTerm} gives the result. 
\end{proof}

\bigskip

\begin{lem} \label{L.ZLarge}
Suppose the $\{X_i\}$ and the vector $(N, C_1, C_2, \dots)$ satisfy Assumption \ref{A.Main} with $\gamma_{1+\epsilon} = ||X_1||_{1+\epsilon} < \infty$ for some $\epsilon > 0$. In addition assume that $Z_N < \infty$ a.s. and $E\left[ \sum_{i=1}^N C_i^{\beta+\epsilon} \right] < \infty$ for some $\beta > 0$.  Let $\nu = \epsilon/(2(\beta+\epsilon))$, $w = x^{1-\nu}/\gamma_{1+\epsilon}$ and $0 < 1/\sqrt{\log x} \leq \delta \leq 1/2$. Then, there exist constants $K, x_0 > 0$ such that for all $x \geq x_0$, 
\begin{align*}
&P\left(S_N > x, \, Z_N > (1+\delta)x/\mu, \, I_N(w) = 0  \right) \\
&\geq P(Z_N > (1+\delta)x/\mu) - K e^{-\epsilon\nu\sqrt{\log x}} P(Z_N > x/\mu) - \frac{K}{x^{\beta+\epsilon/2}} .
\end{align*}
\end{lem}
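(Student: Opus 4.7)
The plan is to lower bound
\[
P(S_N > x, Z_N > T_0, I_N(w) = 0) = P(Z_N > T_0, I_N(w) = 0) - P(S_N \leq x, Z_N > T_0, I_N(w) = 0),
\]
where $T_0 = (1+\delta) x/\mu$. The first term is at least $P(Z_N > T_0) - P(I_N(w) \geq 1) \geq P(Z_N > T_0) - K/x^{\beta + \epsilon/2}$ by the estimate \eqref{eq:CMax}, and since $P(Z_N > T_0) \leq P(Z_N > x/\mu)$, the task reduces to showing
\[
P(S_N \leq x, Z_N > T_0, I_N(w) = 0) \leq K e^{-\epsilon\nu\sqrt{\log x}} P(Z_N > x/\mu).
\]

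To this end, I introduce the centered variables $Y_i = \mu - X_i$, so that $E[Y_i] = 0$ and $\|Y_1\|_{1+\epsilon} \leq 2\gamma_{1+\epsilon}$. On $\{S_N \leq x, Z_N > T_0\}$ we have $\sum_{i=1}^N C_i Y_i = \mu Z_N - S_N \geq \mu Z_N - x > \rho\mu Z_N$ with $\rho := \delta/(1+\delta) \geq \delta/2 \geq 1/(2\sqrt{\log x})$. Since the threshold $\rho\mu Z_N$ depends on $Z_N$ while Lemma \ref{L.TruncBound} needs a deterministic one, I perform a dyadic decomposition: set $T_k = 2^k T_0$ and $A_k = \{T_k < Z_N \leq T_{k+1}\}$, so on $A_k$ the event is contained in $\{\sum C_i Y_i > \rho\mu T_k\}$. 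Fix the truncation scale $v_k = (\mu/2) T_k$ and split this event into (i) $\exists i: C_i Y_i > v_k$, or (ii) $\sum C_i Y_i \Indicator(C_i Y_i \leq v_k) > \rho\mu T_k$.

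For case (i), a conditional union bound combined with Markov (using $\|Y\|_{1+\epsilon}^{1+\epsilon}$) and the estimate $\sum_{i=1}^N C_i^{1+\epsilon} \leq w^\epsilon Z_N \leq 2w^\epsilon T_k$ on $A_k \cap \{I_N(w) = 0\}$ yields a contribution $\leq K(w/T_k)^\epsilon P(A_k)$; summing over $k$ and using $w/T_0 = O(x^{-\nu})$ produces $O(x^{-\nu\epsilon}) P(Z_N > T_0) = o(e^{-\epsilon\nu\sqrt{\log x}}) P(Z_N > x/\mu)$, which is absorbed into the target. For case (ii), apply Lemma \ref{L.TruncBound} to the $Y_i$'s (with $\mu_Y = 0$), with $A = A_k$, $z = \rho\mu T_k$, $u = 2 x^{1-\nu}$ (chosen so that $u/\gamma_Y \geq w$), and $v = v_k$. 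Using $Z_N \leq 2T_k$ on $A_k$ together with $\log(v_k/u) = \nu\log x + k\log 2 + O(1)$, the exponent in the resulting bound reduces to $-2\epsilon\rho\log(v_k/u) + (K\gamma_Y \epsilon \cdot Z_N/v_k) \leq -\epsilon\nu\sqrt{\log x} - \epsilon k\log 2/\sqrt{\log x} + O(1)$, since the choice $v_k = (\mu/2) T_k$ keeps $K\gamma_Y \epsilon Z_N/v_k$ bounded uniformly in $k$.

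Summing the $k$-th bound $K e^{-\epsilon\nu\sqrt{\log x}} e^{-\epsilon k\log 2/\sqrt{\log x}} P(A_k)$ over $k \geq 0$, and using both $\sum_k P(A_k) = P(Z_N > T_0) \leq P(Z_N > x/\mu)$ and $e^{-\epsilon k\log 2/\sqrt{\log x}} \leq 1$, produces the desired $K e^{-\epsilon\nu\sqrt{\log x}} P(Z_N > x/\mu)$. The main obstacle is the calibration of $v_k$: it must scale linearly with $T_k$ (rather than being held fixed at $\delta x$) so that the $Z_N$-dependent blow-up term $(K\gamma_Y/\log(v_k/u)) \theta_k Z_N$ from Lemma \ref{L.TruncBound} stays $O(1)$ uniformly in $k$ even though $Z_N$ is unbounded above, while the companion term $-\theta_k \rho\mu T_k$ simultaneously yields the required $-\epsilon\nu\sqrt{\log x}$ decay for $\delta$ as small as $1/\sqrt{\log x}$; one must also verify that Lemma \ref{L.TruncBound}'s constraints $1/v_k \leq \theta_k \leq 1/u$ hold for all $k \geq 0$ and sufficiently large $x$, which does follow from $v_k/u \gtrsim x^\nu 2^k$.
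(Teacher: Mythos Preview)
Your argument is correct. The approach differs from the paper's in a useful way. The paper handles $\{S_N\le x,\,Z_N>(1+\delta)x/\mu,\,I_N(w)=0\}$ by splitting into just two regions, $(1+\delta)x/\mu<Z_N\le 4x/\mu$ and $Z_N>4x/\mu$. On the first region it uses the same centered variables $\overline X_i=\mu-X_i$ as you do, with a fixed truncation level $v=x$ in Lemma~\ref{L.TruncBound}. On the second region it switches to $\widehat X_i=\mu/2-X_i$, which has strictly negative mean; this makes the coefficient $\bigl(-\mu/2+K\gamma/\log(v/u)\bigr)^+$ vanish for large $x$, so the $Z_N$-dependent blow-up term disappears entirely and one can take a \emph{random} truncation level $v=\mu Z_N$ (re-running the proof of Lemma~\ref{L.TruncBound}) without any dyadic slicing. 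Your dyadic decomposition with $v_k=(\mu/2)T_k$ is a cleaner, more uniform device: it keeps a single centering, stays strictly within the statement of Lemma~\ref{L.TruncBound}, and replaces the somewhat ad~hoc two-region split by a systematic geometric sum. The price is that you must verify the constraints $1/v_k\le\theta_k\le 1/u$ and the inclusion $\{I_N(w)=0\}\subset\{I_N(u/\gamma_Y)=0\}$ uniformly over all $k$, which you do correctly via $v_k/u\gtrsim 2^k x^{\nu}$ and the choice $u=2x^{1-\nu}$. Either route yields the stated bound; yours is a legitimate alternative.
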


\begin{proof}
We start by noting that
\begin{align*}
&P\left(S_N > x, \, Z_N > (1+\delta)x/\mu, \, I_N(w) = 0  \right) \\
&\geq P\left(Z_N > (1+\delta)x/\mu  \right) - P\left(S_N \leq x, \, Z_N > (1+\delta)x/\mu, \, I_N(w) = 0  \right) - P\left(I_N(w) \geq 1  \right).
\end{align*}
From \eqref{eq:CMax} we obtain
$$P( I_N(w) \geq 1 ) \leq K x^{-\beta-\epsilon/2}.$$
Now let $\overline{X}_i = \mu - X_i$, $\widehat{X}_i = \mu/2 - X_i$,  $\overline{S}_N = \sum_{i=1}^N C_i \overline{X}_i$, and $\widehat{S}_N =  \sum_{i=1}^N C_i \widehat{X}_i$. Note that
\begin{align}
&P\left(S_N \leq x, \, Z_N > (1+\delta) x/\mu, \, I_N(w) = 0 \right) \notag \\
&= P\left( \overline{S}_N  \geq \mu Z_N -x, \, (1+\delta)x/\mu < Z_N \leq 4x/\mu, \, I_N(w) = 0 \right) \label{eq:MeanZero} \\
&\hspace{5mm} + P\left( \widehat{S}_N  \geq \mu Z_N/2 -x, \, Z_N > 4x/\mu, \, I_N(w) = 0 \right). \label{eq:NegLargeZ}
\end{align}
To analyze \eqref{eq:MeanZero} define $\overline{J}_N(t) = \#\{1 \leq i < N+1: C_i \overline{X}_i > t\}$ and note that \eqref{eq:MeanZero} is bounded by
\begin{align}
&P\left( \overline{S}_N  \geq \delta x, \, (1+\delta)x/\mu < Z_N \leq 4x/\mu, \, I_N(w) = 0 \right) \notag \\
&\leq P\left( \overline{S}_N  \geq \delta x, \, \overline{J}_N(x) = 0, \, (1+\delta)x/\mu < Z_N \leq 4x/\mu, \, I_N(w) = 0 \right) \label{eq:NegTruncSum} \\
&\hspace{5mm} + P\left( \overline{J}_N(x) \geq 1, \, (1+\delta)x/\mu < Z_N \leq 4x/\mu, \, I_N(w) = 0 \right) .\label{eq:NegBigJump}
\end{align}
By Lemma \ref{L.TruncBound} with $v = x$, $u = x^{1-\nu}$ and $A = ((1+\delta)x/\mu, 4x/\mu]$, \eqref{eq:NegTruncSum} is bounded by
\begin{align*}
&P\left( \sum_{i=1}^N C_i \overline{X}_i \Indicator(C_i \overline{X}_i \leq x) \geq \delta x, \, (1+\delta)x/\mu < Z_N \leq 4x/\mu, \, I_N(w) = 0 \right) \\
&\leq E\left[ \Indicator( (1+\delta)x/\mu < Z_N \leq 4x/\mu) e^{-\theta \delta x + \frac{K ||\overline{X}_1||_{1+\epsilon}}{\log(x^\nu)} \theta Z_N } \right] \\
&\leq e^{-\theta \delta x + \frac{4K ||\overline{X}_1||_{1+\epsilon}}{\mu \log(x^\nu)} \theta x } P(Z_N > x/\mu) \\
&\leq K e^{-\epsilon\nu\sqrt{\log x}} P(Z_N > x/\mu),
\end{align*}
where $\theta = \frac{\epsilon}{x} \log (x^\nu)$. To analyze \eqref{eq:NegBigJump} let $\mathcal{F} = \sigma(N, C_1, C_2, \dots)$ and use the union bound to see that it is bounded by
\begin{align}
&E\left[ \Indicator( (1+\delta)x/\mu < Z_N \leq 4x/\mu, \, I_N(w) = 0) E\left[ \left. \Indicator(\overline{J}_N(x) \geq 1) \right| \mathcal{F} \right] \right] \notag \\
&\leq E\left[ \Indicator( (1+\delta)x/\mu < Z_N \leq 4x/\mu, \, I_N(w) = 0) \sum_{i=1}^N E\left[ \left. \Indicator(C_i \overline{X}_i > x) \right| C_i \right] \right] \notag \\
&\leq \frac{E[|\overline{X}_1|^{1+\epsilon}]}{x^{1+\epsilon}}  E\left[ \Indicator( (1+\delta)x/\mu < Z_N \leq 4x/\mu, \, I_N(w) = 0) \sum_{i=1}^N C_i^{1+\epsilon} \right] \label{eq:intermediateStep} \\
&\leq \frac{K w^\epsilon}{x^{1+\epsilon}} E\left[ \Indicator( (1+\delta)x/\mu < Z_N \leq 4x/\mu) Z_N \right]  \notag \\
&\leq \frac{K}{x^{\nu\epsilon}} P(Z_N > x/\mu). \notag
\end{align}
Now, to analyze \eqref{eq:NegLargeZ} define $\widehat{J}_N(t) = \#\{1 \leq i < N+1: C_i \widehat{X}_i > t\}$ and split the probability into
\begin{align}
&P\left( \widehat{S}_N  \geq \mu Z_N/2 -x, \, Z_N > 4x/\mu, \, I_N(w) = 0, \, \widehat{J}_N(\mu Z_N) = 0 \right) \notag \\
&\hspace{5mm} + P\left( \widehat{S}_N  \geq \mu Z_N/2 -x, \, Z_N > 4x/\mu, \, I_N(w) = 0, \, \widehat{J}_N(\mu Z_N) \geq 1 \right) \notag \\
&\leq P\left( \sum_{i=1}^N C_i \widehat{X}_i \Indicator( C_i \widehat{X}_i \leq \mu Z_N) \geq \mu Z_N/2-x, \, Z_N > 4x/\mu, \, I_N(w) = 0 \right) \label{eq:LargeZTrunc} \\
&\hspace{5mm} + P\left( Z_N > 4x/\mu, \, I_N(w) = 0, \, \widehat{J}_N(\mu Z_N) \geq 1 \right). \label{eq:LargeZBigJump}
\end{align}
The same steps used to derive \eqref{eq:intermediateStep} give that \eqref{eq:LargeZBigJump} is bounded by
\begin{align*}
&E[|\widehat{X}_1|^{1+\epsilon}] E\left[ \Indicator(Z_N > 4x/\mu, \, I_N(w) = 0) (\mu Z_N)^{-1-\epsilon} \sum_{i=1}^N C_i^{1+\epsilon} \right] \\
&\leq K w^\epsilon E\left[ \Indicator(Z_N > 4x/\mu) Z_N^{-\epsilon} \right] \leq \frac{K}{x^{\nu\epsilon}} P(Z_N > 4x/\mu). 
\end{align*}
Finally, to bound \eqref{eq:LargeZTrunc} we can repeat the proof of Lemma \ref{L.TruncBound}, with the difference that $Z_N$ now appears in the truncation and the level to be exceeded. Set $v = \mu Z_N$, $u = x^{1-\nu}$, $z = \mu Z_N/2-x$, $\Theta = \frac{\epsilon}{\mu Z_N} \log ( x^{-1+\nu} \mu Z_N)$, and note that on the set $\{Z_N > 4x/\mu\}$ we have $1/v \leq \Theta \leq 1/u$ for sufficiently large $x$, as required. Now, the same proof of Lemma \ref{L.TruncBound} gives that \eqref{eq:LargeZTrunc} is bounded, for sufficiently large $x$, by
\begin{align*}
E\left[ \Indicator(Z_N > 4x/\mu) e^{-\Theta (\mu Z_N/2-x) + \left( -\mu/2 + \frac{K ||\widehat{X}_1||_{1+\epsilon}}{\log (x^{-1+\nu} \mu Z_N)} \right)^+ \Theta Z_N} \right] &\leq E\left[ \Indicator(Z_N > 4x/\mu) e^{-\Theta (\mu Z_N/2-x) } \right] .
\end{align*}
Now note that on $\{Z_N > 4x/\mu\}$ we have
$$-\Theta (\mu Z_N/2-x) = - \frac{\epsilon(\mu Z_N/2-x)}{\mu Z_N} \log( x^{-1+\nu} \mu Z_N) \leq -\frac{\epsilon}{4} \log( 4x^\nu),$$
which shows that \eqref{eq:LargeZTrunc} is bounded by $K x^{-\epsilon\nu/4} P(Z_N > 4x/\mu)$. This completes the proof. 
\end{proof}

\section{Proofs of the main theorems} \label{S.MainProofs}

In this section we give the proofs of the theorems in Section \ref{S.Main}. We start by stating two  preliminary lemmas. Lemma \ref{L.Potter} is included only for completeness since part (a) is a direct consequence of the Representation Theorem for the $OR$ class, Theorem 2.2.7 in \cite{BiGoTe1987}, and part (b) is contained in Theorem 2.3 in \cite{Cline_94}. 

\begin{lem} \label{L.Potter}
Suppose that $\overline{F} \in OR$ with Matuszewska indices $0 < \alpha_f \leq \beta_f < \infty$. Then, for any $\epsilon > 0$,
\begin{enumerate} \renewcommand{\labelenumi}{\alph{enumi})}
\item there exists $x_0 > 0$ such that $\overline{F}(x) \geq x^{-\beta_f - \epsilon}$ for all $x \geq x_0$.
\item there exist $x_0 > 0$ and $M < \infty$ such that for all $\lambda > 1$ and $x \geq x_0$,
$$\frac{1}{M} \lambda^{-\beta_f -\epsilon} \leq \frac{\overline{F}(\lambda x)}{\overline{F}(x)} \leq M \lambda^{-\alpha_f + \epsilon}.$$
\end{enumerate}
\end{lem}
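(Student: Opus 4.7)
The plan is to deduce both parts from the standard characterizations of the $OR$ class in \cite{BiGoTe1987} and \cite{Cline_94}; no new ideas are required beyond transcribing those results into the Matuszewska-index notation used here.

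For part (b), I would simply invoke Theorem 2.3 of \cite{Cline_94}, which is precisely the Potter-type inequality for $OR$ functions: for any $\epsilon > 0$ there exist $M, x_0 > 0$ such that for $y \geq x \geq x_0$,
\[
\frac{1}{M}\left(\frac{y}{x}\right)^{-\beta_f-\epsilon} \leq \frac{\overline{F}(y)}{\overline{F}(x)} \leq M\left(\frac{y}{x}\right)^{-\alpha_f+\epsilon}.
\]
Setting $y = \lambda x$ gives the stated bound. Conceptually, the upper bound traces back to the definition $\alpha_f = \lim_{\lambda \to \infty} f^*(\lambda)/\log \lambda$: for $\lambda$ large enough, $f^*(\lambda) \geq (\alpha_f - \epsilon)\log \lambda$, that is, $\log(\overline{F}(x)/\overline{F}(\lambda x)) \geq (\alpha_f - \epsilon)\log \lambda - O(1)$ along a cofinal sequence of $x$'s. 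A standard chaining argument (see the proof of Theorem 2.2.2 in \cite{BiGoTe1987}) upgrades this to a uniform-in-$x$ bound, and the constant $M$ absorbs the ratio's behavior for $\lambda$ in a neighborhood of $1$. The lower bound is obtained symmetrically from $\beta_f$ and $f_*$.

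Part (a) then follows from (b) by a simple rescaling. Applying (b) with $\epsilon/2$ in place of $\epsilon$ produces constants $M, x_1 > 0$ so that, for any $x \geq x_1$,
\[
\overline{F}(x) = \overline{F}\left(\frac{x}{x_1}\cdot x_1\right) \geq \frac{\overline{F}(x_1)}{M}\left(\frac{x}{x_1}\right)^{-\beta_f-\epsilon/2} = C\, x^{-\beta_f-\epsilon/2},
\]
where $C = \overline{F}(x_1)\, x_1^{\beta_f+\epsilon/2}/M > 0$. Since $Cx^{\epsilon/2} \to \infty$, one has $Cx^{-\beta_f-\epsilon/2} \geq x^{-\beta_f-\epsilon}$ for all sufficiently large $x$, which yields (a). Alternatively, the Representation Theorem (Theorem 2.2.7 of \cite{BiGoTe1987}) writes every $\overline{F} \in OR$ as $a(x)\exp\bigl(-\int_1^x \eta(u)/u\, du\bigr)$ with $a$ bounded above and below by positive constants and $\eta(u) \leq \beta_f + \epsilon/2$ for $u$ large, which gives the same conclusion directly. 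The only mildly delicate point in the whole argument is extending the bound from the $\lambda$-large regime (where the Matuszewska index dictates the rate) to all $\lambda \geq 1$, but this is a routine boundary issue handled by monotonicity of $\overline{F}$ and is already carried out in the cited references.
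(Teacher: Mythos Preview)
Your proposal is correct and matches the paper's own treatment: the paper also states that part (b) is contained in Theorem~2.3 of \cite{Cline_94} and that part (a) is a direct consequence of the Representation Theorem (Theorem~2.2.7 in \cite{BiGoTe1987}), both of which you invoke. Your additional derivation of (a) from (b) is a harmless elaboration and is equally valid.
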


\bigskip

The second preliminary lemma below establishes the one-big-jump asymptotics for the random weighted sum using the properties of the IR class.

\begin{lem} \label{L.SummandsAsym}
Suppose the $\{X_i\}$ and the vector $(N, C_1, C_2, \dots)$ satisfy Assumption \ref{A.Main}. Assume further that $\overline{F} \in IR$ and has Matuszewska indices $0 < \alpha_f \leq \beta_f < \infty$, and that $Z_N < \infty$ a.s., $E\left[ \sum_{i=1}^N C_i^{\alpha_f-\epsilon} \right] < \infty$ and $E\left[ \sum_{i=1}^N C_i^{\beta_f+\epsilon} \right] < \infty$ for some $0 < \epsilon < \alpha_f$.  If $E[N] < \infty$ then the condition $E\left[ \sum_{i=1}^N C_i^{\alpha_f-\epsilon} \right] < \infty$ can be dropped. Let $\nu = \epsilon/(2(\beta_f+\epsilon))$, $\gamma > 0$, $w = x^{1-\nu}/\gamma$, $y = x/\log x$ and $\delta = 1/\sqrt{\log x}$, then, as $x \to \infty$,
\begin{align*}
\mathbf{R} \triangleq E\left[ \sum_{i=1}^N \overline{F}(x/ C_i)   \right] &\sim
E\left[ \Indicator\left( I_N(w) = 0 \right) \sum_{i=1}^N \overline{F}((1-\delta)x/ C_i) \right]  \triangleq \mathbf{U} \\
&\sim E\left[   \Indicator \left( Z_N \leq y, \, I_N(w) = 0 \right) \sum_{i=1}^N \overline{F}((1+\delta)x/ C_i)  \right]  \triangleq \mathbf{L}.
\end{align*}
\end{lem}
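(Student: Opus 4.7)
The plan is to establish $\mathbf{R} \sim \mathbf{U}$ and $\mathbf{U} \sim \mathbf{L}$ separately via the same decomposition template. Two tools will be used throughout. First, Potter's inequality (Lemma~\ref{L.Potter}(b)) furnishes constants $M, x_0 > 0$ with
$$\overline{F}(x/C_i) \leq M\bigl(C_i^{\alpha_f - \epsilon} + C_i^{\beta_f + \epsilon}\bigr)\overline{F}(x) \quad \text{when } x \geq x_0 \text{ and } x/C_i \geq x_0,$$
which combined with the moment hypotheses gives the integrable dominator $D := \sum_{i=1}^N(C_i^{\alpha_f-\epsilon} + C_i^{\beta_f+\epsilon})$ (or, under $E[N]<\infty$, $D := N + \sum_i C_i^{\beta_f+\epsilon}$, via $C_i^{\alpha_f-\epsilon} \leq 1 + C_i^{\beta_f+\epsilon}$). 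Second, because $\overline{F}$ is monotone, the IR property admits a uniform form: for every $\eta > 0$ there exist $\delta_0, x_0 > 0$ such that $\overline{F}((1-\delta)x)/\overline{F}(x) \leq 1+\eta$ simultaneously for all $\delta \in (0,\delta_0]$ and $x \geq x_0$, and symmetrically $\overline{F}((1+\delta)x)/\overline{F}(x) \geq 1/(1+\eta)$. Since $Z_N < \infty$ a.s.\ forces $\sup_i C_i < \infty$ a.s., we have $\Indicator(I_N(w) \geq 1) \to 0$ a.s.\ as $x \to \infty$; and Potter's lower bound yields $\mathbf{R} \geq K\overline{F}(x)$ in the nondegenerate case (otherwise $\mathbf{R} \equiv 0$ and the lemma is trivial).

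For $\mathbf{R} \sim \mathbf{U}$, I will decompose
$$\mathbf{R} - \mathbf{U} = E\biggl[\Indicator(I_N(w) = 0) \sum_{i=1}^N \bigl(\overline{F}(x/C_i) - \overline{F}((1-\delta)x/C_i)\bigr)\biggr] + E\biggl[\Indicator(I_N(w) \geq 1) \sum_{i=1}^N \overline{F}(x/C_i)\biggr].$$
On $\{I_N(w) = 0\}$ every positive $C_i$ satisfies $C_i \leq w$, so $x/C_i \geq \gamma x^\nu \to \infty$; with $\delta = 1/\sqrt{\log x} \to 0$, the uniform IR bound gives $\overline{F}((1-\delta)x/C_i) - \overline{F}(x/C_i) \leq \eta\,\overline{F}(x/C_i)$, so the absolute value of the first piece is $\leq \eta\,\mathbf{R}$. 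For the second piece, Potter termwise (with a $O(x^{-\beta_f-\epsilon})$ Markov residual from $\{C_i > x/x_0\}$) and dominated convergence with dominator $D$ give $o(\overline{F}(x))$; Lemma~\ref{L.Potter}(a) absorbs the $O(x^{-\beta_f-\epsilon})$ into $o(\overline{F}(x)) = o(\mathbf{R})$. Letting $\eta \downarrow 0$ yields $\mathbf{R} - \mathbf{U} = o(\mathbf{R})$.

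The proof that $\mathbf{U} \sim \mathbf{L}$ is parallel. I will split
$$\mathbf{U} - \mathbf{L} = E\biggl[\Indicator(Z_N \leq y,\, I_N(w) = 0) \sum_{i=1}^N \bigl(\overline{F}((1-\delta)x/C_i) - \overline{F}((1+\delta)x/C_i)\bigr)\biggr] + E\biggl[\Indicator(Z_N > y,\, I_N(w) = 0) \sum_{i=1}^N \overline{F}((1-\delta)x/C_i)\biggr].$$
The first piece is $\leq \eta\,\mathbf{R}$ by applying the uniform IR bound on both sides of $x/C_i$. The second piece is $o(\overline{F}(x)) = o(\mathbf{R})$ because $P(Z_N > y) \to 0$, so Potter applied to $\overline{F}((1-\delta)x/C_i)$ (noting $\overline{F}((1-\delta)x)$ is bounded by a constant times $\overline{F}(x)$ for $\delta \leq 1/2$) together with dominated convergence against $D$ forces the expectation down. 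Combined with the first step, $\mathbf{L} \sim \mathbf{U} \sim \mathbf{R}$.

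The main technical subtlety is the uniform IR bound, which is extracted from the definition by monotonicity: if $\limsup_{x\to\infty}\overline{F}((1-\delta_0)x)/\overline{F}(x) \leq 1+\eta$, then $\overline{F}((1-\delta)x) \leq \overline{F}((1-\delta_0)x)$ for $\delta \leq \delta_0$ forces the bound to hold uniformly for all such $\delta$ and all sufficiently large $x$. Everything else—the Potter and Markov estimates and the dominated convergence applications—is routine once the decompositions above are in place.
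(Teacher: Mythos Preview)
Your proposal is correct and follows essentially the same approach as the paper. The paper sandwiches $\mathbf{U}/\mathbf{R}$ and $\mathbf{L}/\mathbf{R}$ between upper and lower bounds (using $\sup_{t\geq x/w}\overline{F}((1-\delta)t)/\overline{F}(t)$ for the upper bound and the same two error terms $E[\Indicator(I_N(w)\geq 1)\sum_i\overline{F}(x/C_i)]$ and $E[\Indicator(Z_N>y,\,I_N(w)=0)\sum_i\overline{F}(x/C_i)]$ for the lower bound), whereas you bound the differences $\mathbf{R}-\mathbf{U}$ and $\mathbf{U}-\mathbf{L}$ directly; but the ingredients---the uniform IR estimate via monotonicity, Potter's bounds with dominator $D$, the split at large $C_i$ handled by Markov and Lemma~\ref{L.Potter}(a), and dominated convergence using $\Indicator(I_N(w)\geq 1)\to 0$ and $\Indicator(Z_N>y)\to 0$ a.s.---are identical.
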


\begin{proof}
We start with the upper bounds, 
\begin{align*}
\mathbf{U} &\leq  E\left[ \Indicator\left( I_N(w) = 0 \right) \sup_{1\leq j < N+1} \frac{\overline{F}((1-\delta)x/ C_j)}{\overline{F}(x/ C_j)} \sum_{i=1}^N \overline{F}(x/ C_i)  \right] \\
&\leq  \sup_{t \geq x/w} \frac{\overline{F}((1-\delta)t)}{\overline{F}(t)} \, \mathbf{R},
\end{align*}
and $\mathbf{L} \leq \mathbf{R}$. It follows that
$$\limsup_{x \to \infty} \frac{\mathbf{L}}{\mathbf{R}} \leq \limsup_{x \to \infty} \frac{\mathbf{U}}{\mathbf{R}} \leq 1.$$

Now, for the lower bounds we have
\begin{align*}
\mathbf{U} \geq \mathbf{R} - E\left[ \Indicator \left(I_N(w) \geq 1\right) \sum_{i=1}^N \overline{F}(x/ C_i)  \right]
\end{align*}
and
\begin{align*}
\mathbf{L} &\geq E\left[ \Indicator \left( I_N(w) = 0 \right) \inf_{1\leq j < N+1} \frac{\overline{F}((1+\delta)x/ C_j)}{\overline{F}(x/ C_j)} \sum_{i=1}^N \overline{F}(x/ C_i)   \right] \\
&\hspace{5mm} - E\left[ \Indicator \left( Z_N > y , \, I_N(w) = 0\right) \sum_{i=1}^N \overline{F}(x/ C_i)  \right] \\
&\geq \inf_{t \geq x/w} \frac{\overline{F}((1+\delta)t) }{\overline{F}(t)} \left( \mathbf{R} - E\left[ \Indicator\left(I_N(w) \geq 1\right) \sum_{i=1}^N \overline{F}(x/ C_i) \right] \right) \\
&\hspace{5mm} - E\left[ \Indicator\left( Z_N > y, \, I_N(w) = 0 \right) \sum_{i=1}^N \overline{F}(x/ C_i) \right].
\end{align*}
It remains to show that
$$\lim_{x \to \infty} \frac{E\left[ \Indicator\left(I_N(w) \geq 1\right) \sum_{i=1}^N \overline{F}(x/ C_i)   \right] + E\left[ \Indicator\left(Z_N > y, I_N(w) = 0\right) \sum_{i=1}^N \overline{F}(x/ C_i)  \right]
}{\mathbf{R}} = 0 .$$
To obtain a lower bound for $\mathbf{R}$ we use Lemma \ref{L.Potter} (b) and Fatou's lemma as follows, 
\begin{equation} \label{eq:LowerR}
\liminf_{x \to \infty} \frac{\mathbf{R}}{\overline{F}(x)} \geq E\left[   \sum_{i=1}^N \liminf_{x \to \infty} \frac{\overline{F}(x/ C_i)}{\overline{F}(x)} \right]  \geq K E\left[ \sum_{i=1}^N C_i^{\alpha_f-\epsilon} \wedge C_i^{\beta_f+\epsilon} \right] > 0.
\end{equation}
Thus, it suffices to prove that
\begin{equation} \label{eq:TwoLimits}
\lim_{x \to \infty} E\left[ \Indicator\left(I_N(w) \geq 1\right) \sum_{i=1}^N \frac{\overline{F}(x/ C_i)}{\overline{F}(x)}  \right] =  \lim_{x \to \infty} E\left[ \Indicator\left(Z_N > y, I_N(w) = 0\right) \sum_{i=1}^N \frac{\overline{F}(x/ C_i)}{\overline{F}(x)}   \right] = 0 .
\end{equation}
We analyze the second limit by noting that by Lemma \ref{L.Potter} (b), we have that for all sufficiently large $x$,  
\begin{align*}
E\left[ \Indicator\left(Z_N > y, I_N(w) = 0\right) \sum_{i=1}^N \frac{\overline{F}(x/ C_i)}{\overline{F}(x)} \right] &\leq K E\left[ \sum_{i=1}^N C_i^{\alpha_f-\epsilon} \vee C_i^{\beta_f+\epsilon}  \right]  \\
&\leq K E\left[ \sum_{i=1}^N C_i^{\alpha_f-\epsilon} \right]  + K E\left[ \sum_{i=1}^N C_i^{\beta_f+\epsilon} \right] < \infty,
\end{align*}
so by dominated convergence,
\begin{align}
&\limsup_{x \to \infty} E\left[ \Indicator\left(Z_N > y, I_N(w) = 0\right) \sum_{i=1}^N \frac{\overline{F}(x/ C_i)}{\overline{F}(x)}   \right] \notag \\
&\leq  K E\left[ \limsup_{x \to \infty} \Indicator\left(Z_N > y \right) \sum_{i=1}^N C_i^{\alpha_f-\epsilon} \vee C_i^{\beta_f + \epsilon}  \right] = 0. \label{eq:MomentCondition}
\end{align}
For the first limit in \eqref{eq:TwoLimits} we first split the expectation to obtain
\begin{align}
E\left[ \Indicator\left(I_N(w) \geq 1\right) \sum_{i=1}^N \frac{\overline{F}(x/ C_i)}{\overline{F}(x)}  \right] &\leq E\left[ \Indicator\left(I_N(w) \geq 1\right) \sum_{i=1}^N \frac{\overline{F}(x/ C_i)}{\overline{F}(x)}  \Indicator(C_i \leq w)  \right] \label{eq:LargeBdC} \\
&\hspace{5mm} + E\left[ \Indicator\left(I_N(w) \geq 1\right) \sum_{i=1}^N \frac{\Indicator(C_i > w)}{\overline{F}(x)}   \right] .\notag
\end{align}
Dominated convergence again gives 
\begin{align*}
&\limsup_{x \to \infty} E\left[ \Indicator\left(I_N(w) \geq 1\right) \sum_{i=1}^N\frac{\overline{F}(x/ C_i)}{\overline{F}(x)}  \Indicator(C_i \leq w)  \right] \\
&\leq K E\left[ \limsup_{x \to \infty} \Indicator\left(I_N(w) \geq 1\right) \sum_{i=1}^N C_i^{\alpha_f-\epsilon} \vee C_i^{\beta_f+\epsilon}  \right] = 0.
\end{align*}
Finally, to bound \eqref{eq:LargeBdC} note that by \eqref{eq:CMax},
$$E\left[ \Indicator\left(I_N(w) \geq 1\right) \sum_{i=1}^N \Indicator(C_i > w)   \right] \leq E\left[ \sum_{i=1}^N \Indicator\left( C_i > w \right) \right] \leq \frac{K}{x^{\beta_f+\epsilon/2}}.$$
The observation that by Lemma \ref{L.Potter} (a) $\lim_{x \to \infty} x^{\beta_f+\epsilon/2} \overline{F}(x) = \infty$ completes the proof. 
\end{proof}

{\bf Remark:} The proof given above requires that $E\left[ \sum_{i=1}^N C_i^{\alpha_f-\epsilon} \vee C_i^{\beta_f + \epsilon} \right] < \infty$ to derive \eqref{eq:MomentCondition}, which is clearly implied by the two conditions $E\left[ \sum_{i=1}^N C_i^{\alpha_f - \epsilon} \right] < \infty$ and $E\left[ \sum_{i=1}^N C_i^{\beta_f+\epsilon} \right] < \infty$. To see that the first condition can be dropped when $E[N] < \infty$ note that 
$$E\left[ \sum_{i=1}^N C_i^{\alpha_f-\epsilon} \vee C_i^{\beta_f+\epsilon} \right] \leq E[N] + E\left[ \sum_{i=1}^N C_i^{\beta_f+\epsilon} \right] < \infty.$$

We are now ready to prove the main theorems from Section \ref{S.Main}. The first result corresponds to the setting where the asymptotic behavior of both $P(M_N > x)$ and $P(S_N > x)$ is determined by the one-big-jump principle. 

\begin{proof}[Proof of Theorem \ref{T.SummandsDominate}]
Let $\alpha = \alpha_f$, $\beta = \beta_f$, and $\mathbf{R} = E\left[ \sum_{i=1}^N \overline{F}(x/ C_i) \right]$. Note that by \eqref{eq:LowerR} we have that $\mathbf{R} \geq K \overline{F}(x)$, and by Lemma \ref{L.Potter} (a) we have that $\lim_{x \to \infty} x^{\beta+h} \overline{F}(x) = \infty$ for any $h > 0$, from where it follows that 
\begin{equation} \label{eq:littleO1}
K x^{-\beta-\epsilon/2} = o(\mathbf{R})
\end{equation}
as $x \to \infty$. Let $\nu = \epsilon/(2(\beta+\epsilon))$, $w = x^{1-\nu}/\gamma_{1+\epsilon}$, $y = x/\log x$, and $\delta = 1/\sqrt{\log x}$. Then, from Lemmas \ref{L.MaxLowerBound} and \ref{L.SummandsAsym} we obtain, for all three cases, that
$$\liminf_{x \to \infty} \frac{P(M_N > x)}{\mathbf{R}} \geq \liminf_{x \to \infty} \frac{P(S_N > x)}{\mathbf{R}} \geq 1.$$

For the upper bound we first note that by Lemma \ref{L.Potter} (b), 
\begin{align}
&\frac{(\log x)^{1+\epsilon} }{ x^{\epsilon\nu} } E\left[  \Indicator\left( I_N(w) = 0 \right) \sum_{i = 1}^N \overline{F}(y/ C_i)  \right] \notag  \\
&\leq \frac{(\log x)^{1+\epsilon} }{ x^{\epsilon\nu} } E\left[  \Indicator\left( I_N(w) = 0 \right)  \sum_{i = 1}^N K (y/x)^{-\beta-\epsilon} \overline{F}(x/ C_i)  \right] \notag \\
&\leq  \frac{K(\log x)^{\beta+1+2\epsilon} }{ x^{\epsilon\nu} } \cdot \mathbf{R} = o\left(\mathbf{R} \right), \label{eq:littleO2}
\end{align}
for all sufficiently large $x$. We split the rest of the analysis of the upper bounds into the three different cases.

{\bf Case 1:} $\mu < 0$. It follows from Proposition \ref{P.UpperBounds} (b), Lemma \ref{L.SummandsAsym}, and relations \eqref{eq:littleO1} and \eqref{eq:littleO2}, that
$$\limsup_{x\to \infty} \frac{P(M_N > x)}{\mathbf{R}} \leq 1.$$

{\bf Case 2:} $\mu = 0$ and $P(Z_N > x) = O\left(\overline{F}(x) \right)$. We use Proposition \ref{P.UpperBounds} (a),  Lemma \ref{L.SummandsAsym}, and relations \eqref{eq:littleO1} and \eqref{eq:littleO2} to obtain
$$\limsup_{x\to \infty} \frac{P(M_N > x)}{\mathbf{R}} \leq 1 + \limsup_{x \to \infty} \frac{P(\delta Z_N > x) + K x^{-\epsilon\nu/2} P(Z_N > y) }{\mathbf{R}}.$$
To see that the last limit is zero use Lemma \ref{L.Potter} to obtain
\begin{align*}
 \limsup_{x \to \infty} \frac{P(\delta Z_N > x) + K x^{-\epsilon\nu/2} P(Z_N > y) }{\mathbf{R}} &\leq K  \limsup_{x \to \infty} \frac{\overline{F}(x/ \delta)  + x^{-\epsilon\nu/2} \overline{F}(y) }{\overline{F}(x)} \\
 &\leq K \limsup_{x \to \infty} \left( \delta^{\alpha-\epsilon} + x^{-\epsilon \nu/2} (x/y)^{\beta+\epsilon} \right) \\
 &= K \limsup_{x \to \infty} \left( \frac{1}{(\log x)^{(\alpha-\epsilon)/2}} + \frac{(\log x)^{\beta+\epsilon}}{x^{\epsilon \nu/2} } \right) = 0. 
\end{align*}

{\bf Case 3:} $\mu > 0$ and $P(Z_N > x) = o\left( \overline{F}(x) \right)$. We use Proposition \ref{P.UpperBounds} (a), Lemma \ref{L.SummandsAsym}, and relations \eqref{eq:littleO1} and \eqref{eq:littleO2} to obtain
\begin{align*}
\limsup_{x\to \infty} \frac{P(M_N > x)}{\mathbf{R}} &\leq 1 + \limsup_{x \to \infty} \frac{P((\mu+\delta) Z_N > x) + K  x^{-\epsilon\nu/2} P(Z_N > y) + K e^{-\frac{\epsilon\nu\sqrt{\log x}}{\mu}} P(Z_N > x/(2\mu))}{\mathbf{R}} \\
&\leq 1 + K \limsup_{x \to \infty} \frac{ P((\mu+\delta) Z_N > x) + e^{-\frac{\epsilon\nu\sqrt{\log x}}{\mu}} P(Z_N > y) }{\overline{F}(x)}.
\end{align*}
For the first summand in the limit we use Lemma \ref{L.Potter} to see that 
\begin{align*}
\limsup_{x \to \infty} \frac{P((\mu+\delta) Z_N > x)}{\overline{F}(x)} &\leq \limsup_{x \to \infty} \frac{P((\mu+\delta) Z_N > x)}{\overline{F}(x/(\mu+\delta))} \cdot  \limsup_{x \to \infty} \frac{\overline{F}(x/(\mu+\delta))}{\overline{F}(x)} \\
&\leq K  \limsup_{x \to \infty} \frac{P(Z_N > x/(\mu+\delta))}{\overline{F}(x/(\mu+\delta))}  = 0.
\end{align*}
For the second limit we use Lemma \ref{L.Potter} again as follows:
\begin{align*}
\limsup_{x \to \infty} \frac{e^{-\frac{\epsilon\nu\sqrt{\log x}}{\mu}} P(Z_N > y) }{\overline{F}(x)} &\leq \limsup_{x \to \infty} \frac{P(Z_N > y)}{\overline{F}(y)} \cdot \frac{e^{-\frac{\epsilon\nu\sqrt{\log x}}{\mu}} \overline{F}(y)}{\overline{F}(x)} \\
&\leq \limsup_{x \to \infty} \frac{P(Z_N > y)}{\overline{F}(y)} \cdot K \limsup_{x \to \infty} e^{-\frac{\epsilon\nu\sqrt{\log x}}{\mu}} (\log x)^{\beta+\epsilon} = 0. 
\end{align*}
\end{proof}

\bigskip

The next proof corresponds to the setting where the asymptotic behavior of $P(M_N > x)$ and $P(S_N > x)$ is determined by both the one-big-jump principle and the tail behavior of $Z_N$. 

\begin{proof}[Proof of Theorem \ref{T.Both}]
Let $\alpha = \alpha_f$, $\beta = \beta_f$, and $\mathbf{R} = E\left[ \sum_{i=1}^N \overline{F}(x/C_i) \right]$. Note that by \eqref{eq:LowerR} we have that $\mathbf{R} \geq K \overline{F}(x)$, and by Lemma \ref{L.Potter} (a) we have that $\lim_{x \to \infty} x^{\beta+h} \overline{F}(x) = \infty$ for any $h > 0$, from where it follows that 
$K x^{-\beta-\epsilon/2} = o(\mathbf{R})$ as $x \to \infty$. Let $\nu = \epsilon/(2(\beta+\epsilon))$, $w = x^{1-\nu}/\gamma_{1+\epsilon}$, $y = x/\log x$, and $\delta = 1/\sqrt{\log x}$. 

Note that since $Z_N$ is $IR$,  $P((\mu+\delta) Z_N > x) \sim P(Z_N > x/\mu)$ as $x \to \infty$. Also, since $IR \subset OR$, it holds that
$$\limsup_{x \to \infty} \frac{P(Z_N > x/(2\mu))}{P(Z_N > x/\mu)} < \infty.$$
Moreover, if we let $0 \leq \beta_g < \infty$ be the lower Matuszewska index of $\overline{G}(x) = P(Z_N > x)$, then Lemma~\ref{L.Potter} (b) gives
$$\limsup_{x \to \infty} \frac{x^{-\epsilon\nu/2} P(Z_N > y)}{P(Z_N > x/\mu)} \leq K \limsup_{x \to \infty} x^{-\epsilon\nu/2} (\log x/\mu)^{\beta_g+\epsilon} = 0.$$
These observations combined with Proposition \ref{P.UpperBounds} (a), Lemma \ref{L.SummandsAsym}, and relations \eqref{eq:littleO1} and \eqref{eq:littleO2}, give
\begin{align*}
\limsup_{x\to \infty} \frac{P(M_N > x)}{\mathbf{R} + P(Z_N > x/\mu)} &\leq 1 + K \limsup_{x \to \infty} \frac{x^{-\epsilon\nu/2} P(Z_N > y) + e^{-\frac{\epsilon\nu\sqrt{\log x}}{\mu}} P(Z_N > x/(2\mu))  }{ P(Z_N > x/\mu)} = 1.
\end{align*}

For the lower bound we use $P(Z_N > (1+\delta)x/\mu) \sim P(Z_N > x/\mu)$, Lemmas \ref{L.MaxLowerBound}, \ref{L.ZLarge} and \ref{L.SummandsAsym}, and relations \eqref{eq:littleO1} and \eqref{eq:littleO2} to obtain
$$\liminf_{x \to \infty} \frac{P(M_N > x)}{\mathbf{R} + P(Z_N > x/\mu)} \geq \liminf_{x \to \infty} \frac{P(S_N > x)}{\mathbf{R}+ P(Z_N > x/\mu)} = 1 .$$
This completes the proof. 
\end{proof}

\bigskip

The last proof corresponds to the setting where the tail behavior of $P(M_N > x)$ and $P(S_N > x)$ is solely determined by the sum of the weights, $Z_N$. 

\begin{proof}[Proof of Theorem \ref{T.ZDominates}]
Let $\alpha = \alpha_g$, $\beta = \beta_g$, $\nu = \epsilon/(2(\beta+\epsilon))$, $w = x^{1-\nu}/\gamma_{1+\epsilon}$, $y = x/\log x$, and $\delta = 1/\sqrt{\log x}$. Recall that $\overline{F}(x) = P(X_1 > x)$ and $\overline{G}(x) = P(Z_N > x)$. 

Note that since $\overline{G} \in IR$, then $P(Z_N > (1+\delta)x/\mu) \sim P((\mu+\delta) Z_N > x) \sim  P(Z_N > x/\mu)$ as $x \to \infty$.

We start with the upper bound, for which we use Proposition \ref{P.UpperBounds} (a) to obtain
\begin{align*}
\limsup_{x \to \infty} \frac{P(M_N > x)}{P(Z_N > x/\mu)} &\leq 1 + K \limsup_{x \to \infty} \frac{1}{P(Z_N > x/\mu)} \left\{ E\left[ \Indicator\left( I_N(w) = 0 \right) \sum_{i=1}^N \overline{F}((1-\delta)x/C_i) \right] \right. \\
&\hspace{20mm} \left. + \frac{(\log x)^{1+\epsilon}}{x^{\epsilon\nu}} E\left[ \Indicator\left( I_N(w) = 0 \right) \sum_{i=1}^N \overline{F}(y/C_i) \right]  + \frac{1}{x^{\beta+\epsilon/2}} \right. \\
&\hspace{12mm} \left. \phantom{\left[ \sum_i^N \right.} + x^{-\epsilon\nu/2} P(Z_N > y) +  e^{-\frac{\epsilon\nu\sqrt{\log x}}{\mu}} P(Z_N > x/(2\mu))  \right\}.
\end{align*}
Since the distribution of $Z_N$ belongs to $IR \subset OR$, then $\lim_{x \to \infty} x^{\beta+\epsilon/2} P(Z_N > x/\mu) = \infty$ by Lemma~\ref{L.Potter}~(a). Also, by the same arguments used in the proof of Theorem \ref{T.Both},
$$\limsup_{x \to \infty} \frac{ x^{-\epsilon\nu/2} P(Z_N > y) + e^{-\frac{\epsilon\nu\sqrt{\log x}}{\mu}} P(Z_N > x/(2\mu))   }{P(Z_N > x/\mu)} = 0.$$
For the two remaining terms we use Lemma \ref{L.Potter} to obtain, for sufficiently large $x$, 
\begin{align*}
&\frac{1}{\overline{G}(x/\mu)} \left\{ E\left[ \Indicator\left( I_N(w) = 0 \right) \sum_{i=1}^N \overline{F}((1-\delta)x/C_i)  \right]  + \frac{(\log x)^{1+\epsilon}}{x^{\epsilon\nu}} E\left[ \Indicator\left( I_N(w) = 0 \right) \sum_{i=1}^N \overline{F}(y/C_i) \right]    \right\} \\
&\leq \sup_{t \geq y/w} \frac{\overline{F}(t)}{\overline{G}(t)} \left\{ E\left[ \Indicator\left( I_N(w) = 0 \right) \sum_{i=1}^N \frac{\overline{G}((1-\delta)x/C_i)}{\overline{G}(x/\mu)}  \right] +  \frac{(\log x)^{1+\epsilon}}{x^{\epsilon\nu}} E\left[ \Indicator\left( I_N(w) = 0 \right) \sum_{i=1}^N \frac{\overline{G}(y/C_i)}{\overline{G}(x/\mu)} \right]  \right\} \\
&\leq  \sup_{t \geq y/w} \frac{\overline{F}(t)}{\overline{G}(t)} \left\{ E\left[ K  \sum_{i=1}^N \left( \frac{C_i}{(1-\delta)\mu} \right)^{\alpha-\epsilon} \vee \left( \frac{C_i}{(1-\delta)\mu } \right)^{\beta+\epsilon}     \right] \right. \\
&\hspace{35mm} \left. +  \frac{(\log x)^{1+\epsilon}}{x^{\epsilon\nu}} E\left[  K \sum_{i=1}^N \left( \frac{C_i x}{\mu y} \right)^{\alpha-\epsilon} \vee \left( \frac{C_i x}{\mu y} \right)^{\beta+\epsilon}     \right]  \right\} \\
&\leq K  \sup_{t \geq y/w} \frac{\overline{F}(t)}{\overline{G}(t)} \left\{ 1  +  \frac{(\log x)^{\beta+1+2\epsilon}}{x^{\epsilon\nu}} \right\} \leq K  \sup_{t \geq y/w} \frac{\overline{F}(t)}{\overline{G}(t)}  . 
\end{align*}
Since $\overline{F}(x) = o\left( \overline{G}(x) \right)$ as $x \to \infty$, we have $\limsup_{x \to \infty} \sup_{t \geq y/w} \frac{\overline{F}(t)}{\overline{G}(t)} = 0$. The expectation preceding the supremum is finite either if $E\left[ \sum_{i=1}^N C_i^{\alpha-\epsilon} \right] < \infty$ and $E\left[ \sum_{i=1}^N C_i^{\beta+\epsilon} \right] < \infty$, or if $E[N] < \infty$ and $E\left[ \sum_{i=1}^N C_i^{\beta+\epsilon} \right] < \infty$ (see the remark following the proof of Lemma \ref{L.SummandsAsym}).

For the lower bound we use $P(Z_N > (1+\delta)x/\mu) \sim P(Z_N > x/\mu)$ and Lemma \ref{L.ZLarge} to obtain 
\begin{align*}
\liminf_{x \to \infty} \frac{P(M_N > x)}{P(Z_N > x/\mu)} &\geq \liminf_{x \to \infty} \frac{P(S_N > x)}{P(Z_N > x/\mu)} \geq 1.
\end{align*}
This completes the proof. 
\end{proof}

\section*{Acknowledgements}
The author would like to thank an anonymous referee for his/her thorough reading of the paper and helpful comments. This work was supported by NSF Grant CMMI-1131053.

\bibliographystyle{plain}

\begin{thebibliography}{10}

\bibitem{Alsm_Mein_10b}
G.~Alsmeyer and M.~Meiners.
\newblock Fixed points of the smoothing transform: {T}wo-sided solutions.
\newblock {\em arXiv:1009.2412}, 2010.

\bibitem{Asm2003}
S.~Asmussen.
\newblock {\em Applied Probability and Queues}.
\newblock Springer, New York, 2003.

\bibitem{BiGoTe1987}
N.H. Bingham, C.M. Goldie, and J.L. Teugels.
\newblock {\em Regular variation}.
\newblock Cambridge University Press, Cambridge, 1987.

\bibitem{Bor00}
A.~Borovkov.
\newblock Estimates for the distribution of sums and maxima of sums of random
  variables without the {C}ram\' {e}r condition.
\newblock {\em Siberian Math J.}, 41(5):997--1038, 2000.

\bibitem{Borov_Borov_2008}
A.A. Borovkov and K.A. Borovkov.
\newblock {\em Asymptotic {A}nalysis of {R}andom {W}alks}.
\newblock Cambridge University Press, New York, 2008.

\bibitem{Chen_Su_06}
Y.~Chen and C.~Su.
\newblock Finite time ruin probability with heavy-tailed insurance and
  financial risks.
\newblock {\em Statistics \& Probability Letters}, 76:1812--1820, 2006.

\bibitem{Cline_94}
D.B.H. Cline.
\newblock Intermediate regular and $\pi$ variation.
\newblock {\em Proc. London Math. Soc.}, 68(3):594--616, 1994.

\bibitem{Dal_Ome_Ves_07}
D.J. Daley, E.~Omey, and R.~Vesilo.
\newblock The tail behaviour of a random sum of subexponential random variables
  and vectors.
\newblock {\em Extremes}, 10:21--39, 2007.

\bibitem{Den_Foss_Kor_10}
D.~Denisov, S.~Foss, and D.~Korshunov.
\newblock Asymptotics of randomly stopped sums in the presence of heavy tails.
\newblock {\em Bernoulli}, 16(4):971--994, 2010.

\bibitem{EmKlMi1997}
P.~Embrechts, C.~Kl{\"u}ppelberg, and T.~Mikosch.
\newblock {\em Modelling Extremal Events}.
\newblock Springer, Berlin, 1997.

\bibitem{Goovaerts_etal_05}
M.J. Goovaerts, R.~Kaas, R.J.A. Laeven, Q.~Tang, and R.~Vernic.
\newblock The tail probability of discounted sums of pareto-like losses in
  insurance.
\newblock {\em Scandinavian Actuarial Journal}, 6:446--461, 2005.

\bibitem{Iksanov_04}
A.M. Iksanov.
\newblock Elementary fixed points of the {BRW} smoothing transforms with
  infinite number of summands.
\newblock {\em Stochastic Process. Appl.}, 114:27--50, 2004.

\bibitem{Jel_Olv_10}
P.R. Jelenkovi\'c and M.~Olvera-Cravioto.
\newblock Information ranking and power laws on trees.
\newblock {\em Adv. Appl. Prob.}, 42(4):1057--1093, 2010.

\bibitem{Jess_Miko_06}
A.H. Jessen and T.~Mikosch.
\newblock Regularly varying functions.
\newblock {\em Publications de L'Institut Mathematique, Nouvelle Serie},
  80(94):171--192, 2006.

\bibitem{Liu_98}
Q.~Liu.
\newblock Fixed points of a generalized smoothing transformation and
  applications to the branching random walk.
\newblock {\em Adv. Appl. Prob.}, 30:85--112, 1998.

\bibitem{Mik_Sam_00}
T.~Mikosch and G.~Samorodnitsky.
\newblock The supremum of a negative drift random walk with dependent
  heavy-tailed steps.
\newblock {\em Ann. Appl. Probab.}, 10(3):1025--1064, 2000.

\bibitem{Nag82}
S.V. Nagaev.
\newblock On the asymptotic behavior of one-sided large deviation
  probabilities.
\newblock {\em Theory Probab. Appl.}, 26(2):362--366, 1982.

\bibitem{Res_Will_91}
S.~Resnick and E.~Willekens.
\newblock Moving averages with random coefficients and random coefficient
  autoregressive models.
\newblock {\em Communications in Statistics - Stochastic Models},
  7(4):511--525, 1991.

\bibitem{Tan_Tsi_03}
Q.~Tang and G.~Tsitsiashvili.
\newblock Randomly weighted sums of subexponential random variables with
  application to ruin theory.
\newblock {\em Extremes}, 6:171--188, 2003.

\bibitem{Volk_Litv_08}
Y.~Volkovich and N.~Litvak.
\newblock Asymptotic analysis for personalized web search.
\newblock {\em Adv. Appl. Prob.}, 42(2):577--604, 2010.

\bibitem{Wang_Tang_06}
D.~Wang and Q.~Tang.
\newblock Tail probabilities of randomly weighted sums of random variables with
  dominated variation.
\newblock {\em Stochastic Models}, 22(2):253 -- 272, 2006.

\bibitem{Zhang_Shen_Weng_09}
Y.~Zhang, X.~Shen, and C.~Weng.
\newblock Approximation of the tail probability of randomly weighted sums and
  applications.
\newblock {\em Stochastic Processes and their Applications}, 119:655--675,
  2009.

\end{thebibliography}

\end{document}